\crefname{section}{Section}{Sections}
\crefname{subsection}{Subsection}{Subsections}
\crefname{appendix}{Appendix}{Appendix}
\crefname{figure}{Figure}{Figures}
\crefname{table}{Table}{Tables}
\crefname{property}{Property}{Properties}
\crefname{theorem}{Theorem}{Theorem}
\crefname{criterion}{criterion}{criteria}
\newtheorem{theorem}{Theorem}
\newtheorem{lemma}[theorem]{Lemma}
\newtheorem{property}[theorem]{Property}
\newtheorem{corollary}[theorem]{Corollary}
\newtheorem{definition}[theorem]{Definition}
\newtheorem{remark}{Remark}
\newtheorem{criterion}{Criterion}
\newcommand\bA{{\bf A}}
\newcommand\bB{{\bf B}}
\newcommand\bD{{\bf D}}
\newcommand\bDn{{\tilde{\bD}}}
\newcommand\bJ{{\bf J}}
\newcommand\bM{{\bf M}}
\newcommand\bMn{{\tilde{\bM}}}
\newcommand\bR{{\bf R}}
\newcommand\bLambda{{\bf \Lambda}}
\newcommand\bLambdan{{\tilde{\bLambda}}}
\newcommand\bbR{\mathbb{R}}
\newcommand\bbN{\mathbb{N}}
\newcommand\bbH{\mathbb{H}}
\newcommand\bg{\boldsymbol{g}}
\newcommand\br{\boldsymbol{r}}
\newcommand\bw{\boldsymbol{w}}
\newcommand\bF{{\boldsymbol{F}}}
\newcommand\bC{{\boldsymbol{C}}}
\newcommand\bS{{\boldsymbol{S}}}
\newcommand\bSn{{\tilde{\bS}}}
\newcommand\bU{{\boldsymbol{U}}}
\newcommand\dd{\,\mathrm{d}}
\newcommand\ctend{c\,t_{\mathrm{end}}}
\newcommand\cell{\mathrm{cell}}
\newcommand\PN{{$P_N$ }}
\newcommand\MN{{$M_N$ }}
\newcommand\MP[1]{{${M\!P}_{#1}$ }}
\newcommand\MPN{{\MP{\!N}}}
\newcommand\Mone{{$M_1$ }}
\newcommand\MPtwo{{\MP{2}}}
\newcommand\HMP[1]{{${H\!M\!P}_{#1}$ }}
\newcommand\HMPN{{\HMP{\!N}}}
\newcommand\mS{{\mathcal{S}}}
\newcommand\mR{{\mathcal{R}}}
\newcommand\mP{\mathcal{P}}
\newcommand\mPn{\tilde{\mathcal{P}}}
\newcommand\mK{{\mathcal{K}}}
\newcommand\mKn{{\tilde{\mK}}}
\newcommand\cs[2]{{\lambda^{(#1)}_{#2}}}
\newcommand\weight{\omega^{[\alpha]}}
\newcommand\pl{\phi^{[\alpha]}}
\newcommand\Pl{\Phi^{[\alpha]}}
\newcommand\Pdl{\Psi^{[\alpha]}}
\newcommand\weightn{\tilde{\omega}^{[\alpha]}}
\newcommand\Pln{\tilde{\Phi}^{[\alpha]}}
\newcommand\pln{\tilde{\phi}^{[\alpha]}}
\newcommand\spaceH{\bbH^{[\alpha]}}
\newcommand\spaceHn{\tilde{\bbH}^{[\alpha]}}
\newcommand\pd[2]{\dfrac{\partial {#1}}{\partial {#2}}}
\newcommand\od[2]{\dfrac{\dd {#1}}{\dd {#2}}}
\newcommand\inner[2]{\left\langle{#1},{#2}\right\rangle_{\spaceH_N}}
\newcommand\innern[2]{\left\langle{#1},{#2}\right\rangle_{\spaceHn_N}}
\newcommand\moment[1]{{\langle#1\rangle}}
\numberwithin{equation}{section}
\newcommand\delete[1]{}
\newcommand\add[1]{#1}
\title{A Nonlinear Hyperbolic Model for Radiative Transfer Equation in
  Slab Geometry}
\author{ 
Yuwei Fan\thanks{Department of Mathematics, Stanford University,
    Stanford, CA 94305, email: {\tt ywfan@stanford.edu}},~~
Ruo Li\thanks{HEDPS \& CAPT, LMAM \& School of Mathematical
    Sciences, Peking University, Beijing, China, email: {\tt rli@math.pku.edu.cn}}, 
  ~~and
Lingchao Zheng\thanks{School of Mathematical Sciences, Peking
    University, Beijing, China, email: {\tt lczheng@pku.edu.cn}} 
}
\begin{document}
\maketitle

\begin{abstract}

  Linear models for the radiative transfer equation have been well
  developed, while nonlinear models are seldom investigated even for
  slab geometry due to some essential difficulties. We have proposed a
  moment model in \cite{MPN} for slab geometry, which combines the
  ideas of the classical \PN and \MN model. Though the model is far
  from perfect, it was demonstrated to be quite efficient in
  numerically approximating the solution of the radiative transfer
  equation, that we are motivated to improve this model further. 
  Consequently, we propose in this paper a new model following
  the chartmap in \cite{MPN} with some significant theoretic
  progresses. The new model is derived with global hyperbolicity, and
  meanwhile some necessary physical properties are preserved. We give
  a complete analysis of the characteristic structure and propose a
  numerical scheme for the new model. Numerical examples are presented
  to demonstrate the numerical performance of the new model.

  \vspace*{4mm}
  \noindent {\bf Keywords:} Radiative transfer equation; slab
  geometry; nonlinear model; global hyperbolicity.
\end{abstract}

\section{Introduction}
In kinetic theory, the radiative transfer equation (RTE), which is the
evolution equation of the specific intensity, describes the motion of
photons and their interaction with the background medium. In the past
decades, it has many applications in different fields, for instance,
radiation astronomy \cite{mihalas1978stellar}, reactor physics
\cite{pomraning1973equations, duderstadt1979transport}, atmospheric
radiative transfer \cite{marshak20053d}, and optical imaging
\cite{klose2002optical,tarvainen2005hybrid}.
The RTE is a high-dimensional integro-differential kinetic equation,
so how to develop effective numerical methods for RTE is an important
issue. The common numerical methods can be classified into two
categories: the probabilistic methods like the direct simulation Monte
Carlo (DSMC) methods \cite{Bird, hayakawa2007coupled,
  densmore2012hybrid}, and the deterministic schemes
\cite{broadwell1964study, larsen2010advances,
  Stamnes1988Electromagnetic, jeans1917stars, Davison1960on,
  dubroca1999theoretical, minerbo1978maximum,
  alldredge2016approximating, fan2018fast, MPN}, such as the discrete
ordinates method ($S_N$) \cite{broadwell1964study, larsen2010advances,
  Stamnes1988Electromagnetic}, the moment methods
\cite{jeans1917stars, Davison1960on, dubroca1999theoretical,
  alldredge2016approximating, MPN} and etc.

The DSMC method, introduced by Bird in \cite{Bird}, follows a
representative set of photons as they interact with background and
move in physical space. So far, this method has made remarkable
successes in solving the RTE, but the statistical scatter (or
statistical noise) is the main issue for its accuracy.  In order to
improve accuracy, one needs to increase the number of photons, which
significantly increases both the computational and memory
requirements.

The discrete ordinates method ($S_N$), which is one of the most
popular deterministic methods, solves the transport equation along
with a discrete set of angular directions from a given quadrature set.
However, the $S_N$ model is based on the assumption that the particles
can only move along the directions in the quadrature set, which
results in numerical artifacts, known as \emph{ray effects}
\cite{larsen2010advances}.

The moment method studies the evolution of a finite number of moments
of the specific intensity. Typically, the evolution equation of a
lower order moment depends on higher order moments. Hence one has to
introduce the called \emph{moment closure} to close the moment model.
A common method for the moment closure is to construct an ansatz to
approximate the specific intensity and the two most popular moment
methods are the spherical harmonics method ($P_N$)
\cite{pomraning1973equations} and the maximum entropy method ($M_N$)
\cite{levermore1996moment, dubroca1999theoretical,
  minerbo1978maximum}.  The \PN model constructs the ansatz by
expanding the specific intensity around the equilibrium in terms of
spherical harmonics in the velocity direction. However, the resulting
model may lead to nonphysical oscillations, or even worse, negative
particle concentration \cite{brunner2001one, brunner2005two,
  mcclarren2008solutions}. The \MN model constructs the ansatz based
on the principle of maximum entropy \cite{levermore1996moment,
  dubroca1999theoretical}. However, no algebraic expression of the
moment closure is known for the case $N\geq 2$, and one has to solve
an ill-conditioned optimization problem to obtain the moment closure
in the implementation, which strongly limits the application of the
\MN model.

Recently, a nonlinear moment model (called the \MPN model) was
proposed in \cite{MPN}. This model takes the ansatz of the \Mone model
(the first order of the \MN model) as the weight function, then
constructs the ansatz by expanding the specific intensity around the
weight function in terms of orthogonal polynomials in the velocity
direction. The \MPN model is a nonlinear model since the weight
function contains the energy flux of the intensity. Numerical tests in
\cite{MPN} demonstrate its numerical efficiency and show that the \MPN
model produces an improved approximation of the intensity in
comparison of the \PN model in \cite{MPN}. Moreover, it was proved
that the \MPN model with $N=2$ is globally hyperbolic in the
realizable domain.

In spite of its numerous progress, the \MPN model is, however, far from
perfect. The theoretical investigation shows that the \MPN model with
$N\geq 3$ loses its hyperbolicity when the specific intensity is far
away from the equilibrium. For the case $N=2$, the \MPN model might
give unphysical characteristic speeds. Precisely, the characteristic
speeds could be faster than the speed of light. Detailed discussion is
presented in \cref{sec:defects}. These defects limit the application
of the \MPN model on the strong non-equilibrium problems and
time-dependent problems. Encouraged by the improved numerical
performance of the \MPN model, we are motivated to study further
following this spirit to get rid of these limits. The top object is to
study how to gain hyperbolicity of the \MPN model.

Although the hyperbolicity is a critical issue for the moment model,
there are not too many works on the hyperbolic regularization till
now. The well-known entropy-based \MN model \cite{levermore1996moment}
is globally hyperbolic because its ansatz is an exponential function,
which makes the model symmetric hyperbolic. However, it does not provide
clues for other models to gain hyperbolicity. The first globally
hyperbolic regularization was proposed in \cite{Fan, Fan_new}, where
the authors study the regularization by investigating the coefficient
matrix of the reduced model. This work was extended to a general
framework on deriving a hyperbolic reduced model for generic kinetic
equations in \cite{framework, Fan2015} based on the operator
projection (or truncation). Many follow-up works were proposed after
that, for instance \cite{Tiao, DFT, Koellermeier, di2016quantum,
  kuang2017globally}. We refer readers to \cite{julian2017diagram} and
references therein for more details. Hence, a natural idea is to apply
the hyperbolic regularization framework in \cite{framework,Fan2015} on
the \MPN model to yield a globally hyperbolic model. However, though
being hyperbolic, the resulting model is not satisfied since it
changes the model even for $N=1$, in which case the \MPN model is
precisely the \Mone model. This indicates the resulting model may not
able to yield a correct high-order Eddington approximation. We are
obliged to develop additional techniques to attain a satisfied model.

In this paper, we first discuss some natural criteria to improve the
\MPN model by hyperbolic regularization:
\begin{enumerate}
\item the regularized model is globally hyperbolic;
\item the characteristic speeds of the regularized model cannot be
  faster than the speed of light;
\item the regularization vanishes for the case $N=1$;
\item only the evolution equations that are closed by the moment
  closure can be changed.
\end{enumerate}
The first criterion is our goal, and the second one is a natural
physical constraint. The third one is to guarantee the correctness of
the high-order Eddington approximation, and the last one is to attain
high efficiency. More discussion is presented in \cref{c1,c2,c3,c4}.

Taking these criteria into account, we notice that the key idea of the
framework in \cite{framework, Fan2015} is that in the convection term,
the spatial derivative operator $\pd{\cdot}{z}$ and the multiplying
velocity operator $\mu\cdot$ are not coupled in the space defined by
the specific intensity but they are coupled in the linear space
defined by the ansatz, and then the authors decoupled these two
operators to gain the hyperbolicity. Keeping such an idea in mind, we are
inspired to propose a modified hyperbolic regularization. Making use
of the weight function and the ansatz of the \MPN model, we introduce
a new space that is defined by the derivative of the weight function
with respect to the parameter in the weight function. This makes us
decouple the spatial derivative operator and the multiplying velocity
operator in this new space. Consequently, the resulting moment model
satisfies all the criteria, saying that the new model is not only
globally hyperbolic but also retains some physical properties of the
RTE. The characteristic structure of the new model is well
studied. Moreover, the new hyperbolic regularization generalizes the
framework in \cite{framework, Fan2015}, extends its application range
and also takes properties of the kinetic equation into account of the
regularization.

To develop a numerical scheme for the new model, we adopt the DLM
theory \cite{Maso} to deal with the non-conservative part by
introducing a generalized Rankine-Hugoniot condition. Then the
numerical scheme in \cite{Rhebergen}, which can be treated as a
non-conservative version of the HLL, is applied to discretize the
non-conservative system. Numerical simulations are performed to
demonstrate the numerical efficiency of the new model. Thanks to the
hyperbolic regularization, the new model works well for the case the
\MPN model fails. The simulations on benchmark problems show that the
new model has good agreement with the reference solution.


The rest of this paper is arranged as follows. \Cref{sec:RTE} briefly
introduces the RTE and the \MPN model. Particularly, we try to discuss
the defects of the \MPN model in detail to clarify the improvements
in the new model. In \cref{sec:regularization}, we point out the
failure of the hyperbolic regularization framework in
\cite{framework,Fan2015} and propose the generalized hyperbolic
regularization method for the \MPN model to yield the new
model. Numerical scheme and numerical results for the new model are
presented in \cref{sec:num}. The paper ends with a conclusion in
\cref{sec:conclusion}.


\section{\MPN Model for Radiative Transfer Equation}\label{sec:RTE}
The time-dependent radiative transfer equation (RTE) for a grey medium in the slab geometry has the
form
\begin{equation}\label{eq:RTE}
    \frac{1}{c}\pd{I}{t}+\mu\pd{I}{z}=\mS(I),
\end{equation}
where $I=I(z,t,\mu)$ is the specific intensity of radiation and $c$ is the speed of light.
The variable $\mu\in[-1,1]$ is the cosine of the angle between the photon velocity and the positive
$z$-axis.
The right hand side $\mS(I)$ denotes the actions by the background medium on the photons, and it
usually contains a scattering term, an absorption term, and an emission term.

\subsection{Moment method}
Denote the $k$-th moment of the specific intensity by 
\begin{equation}
    \moment{I}_k \triangleq \int_{-1}^1\mu^k I(\mu)\dd\mu,\quad k\in \bbN,
\end{equation}
then multiplying \eqref{eq:RTE} by $\mu^k$ and integrating it with respect to $\mu$ over $[-1,1]$
yields the moment equations
\begin{equation}
    \frac{1}{c}\pd{\moment{I}_k}{t} + \pd{\moment{I}_{k+1}}{z}
    = \moment{\mS(I)}_k,\quad k\in\bbN.
\end{equation}
Notice that the governing equation of $\moment{I}_k$ depends on the $(k+1)$-th moment
$\moment{I}_{k+1}$, which indicates that the full system contains infinite number of
equations. Thus, in order to derive a reduced model for \eqref{eq:RTE}, we choose a positive integer
$N$ and discard all the governing equation of $\moment{I}_k$, $k>N$. Clearly, the truncated 
system is not closed due to its dependence on $\moment{I}_{N+1}$, so we need to provide a so-called
\emph{moment closure} for the model.
A common strategy of the moment closure is to construct an ansatz for the specific intensity.
Precisely, let $E_k$, $k=0,\dots,N$, be the $k$-th known moments for a certain unknown specific
intensity $I$. One can propose an approximation, also called \emph{ansatz}, $\hat{I}(E_0,\dots,E_N;
\mu)$ such that
\begin{equation}\label{eq:constraints}
    \moment{\hat{I}(E_0,\dots,E_N;\cdot)}_k=E_k,\quad k=0,\dots,N,
\end{equation}
and $\hat{I}$ is uniquely determined by \eqref{eq:constraints}. Then the moment closure is given by
\begin{equation}\label{eq:closure}
    E_{N+1} = \moment{\hat{I}(E_0,\dots,E_N;\cdot)}_{N+1},
\end{equation}
and the moment model is proposed to be the system
\begin{equation}\label{eq:momentsystem}
    \frac{1}{c}\pd{E_k}{t}+\pd{E_{k+1}}{z} = \moment{\mS(\hat{I}(E_0,\dots,E_N;\mu))}_k,
    \quad k=0,\dots,N.
\end{equation}

Based on the moment closure strategy, many existing models are developed in the literature,
for example, the \PN model \cite{jeans1917stars}, the \MN model \cite{levermore1996moment,
dubroca1999theoretical}, the positive \PN model \cite{hauck2010positive}, the $B_2$ model
\cite{alldredge2016approximating}, and the \MPN model \cite{MPN}. The \MPN model proposed in
\cite{MPN} shows good numerical results for some standard benchmarks. In this paper, we will restudy
this model and point out its defects in both theoretical analysis and limitation on numerical
simulations, and then propose a novel regularization for this model.

\subsection{\MPN model}\label{sec:MPNmodel}
The \MPN model starts from introducing the weight function
\begin{equation}\label{eq:weight}
    \weight(\mu) = \frac{1}{(1+\alpha\mu)^4},\quad \alpha\in(-1,1).
\end{equation}
Using the Gram-Schmidt orthogonalization, one can directly define a series of monic orthogonal
polynomials in the interval $[-1,1]$ with respect to the weight function $\weight(\mu)$ recursively
as
\begin{align}\label{eq:Gram-Schmidt}
  \pl_0(\mu) = 1, \quad \pl_j(\mu) = \mu^j - \sum_{k=0}^{j-1}
  \frac{\mK_{j,k}} {\mK_{k,k}} \pl_k(\mu), \quad j\geq 1,
\end{align}
where  the coefficients $\mK_{j,k}$ is given by 
\begin{align}
  \mK_{j,k} = \int_{-1}^1\mu^j\pl_k(\mu)\weight(\mu)\dd\mu.
\end{align}
The orthogonality of $\pl_k$ yields
\begin{equation}
    \mK_{j,k} = 0, \text{ if } j<k,\quad \mK_{k,k}=\int_{-1}^1(\pl_k(\mu))^2\weight\dd\mu>0.
\end{equation}
The ansatz of the \MPN model is defined as
\begin{equation}\label{eq:ansatz}
    \hat{I}(E_0, \dots, E_N;\mu) \triangleq \sum_{i=0}^N f_i \Pl_i(\mu), 
\end{equation}
where $\Pl_i(\mu)=\pl_i(\mu)\weight(\mu), i = 0,1,\dots,N$ 
are the basis functions, and $f_i$ are the expansion coefficients 
to be determined by the moment
constraints \eqref{eq:constraints}. Thanks to the orthogonality of $\pl_i$,
we have
\[
    f_i = \frac{1}{\mK_{i,i}} \int_{-1}^1 \pl_i(\mu)\hat{I}(\mu)\dd\mu,
    \quad i=0,\dots,N.
\]
Substituting the recursive relationship \eqref{eq:Gram-Schmidt} into the upper
equation yields the following recursive formulation for $f_i$, which are
functions dependent on $E_i$,
\begin{equation}\label{eq:fk_Ek}
    f_i = \frac{1}{\mK_{i,i}} \left(E_i - \sum_{j=0}^{i-1}
    \mK_{i,j} f_j\right),\quad 0\leq i\leq N.
\end{equation}
The moment closure is then given by 
\begin{equation}\label{eq:MPNclosure}
    E_{N+1} = \sum_{k=0}^N \mK_{N+1,k} f_k.
\end{equation}
For the \MPN model, the parameter $\alpha$ is set as $\alpha=
-\frac{3E_1/E_0}{2+\sqrt{4-3(E_1/E_0)^2}}$. In this case, direct calculations yield
\begin{equation}\label{eq:f1}
    f_1=0.
\end{equation}

\subsection{Defects of the \MPN model}\label{sec:defects}
The \MPN model has been well studied in \cite{MPN}. It was shown that the ansatz \eqref{eq:ansatz}
had a better approximation to the specific intensity than the \PN model, and the \MPN model was
numerically demonstrated to be effective in approximating the RTE. As a particular case, the \MPtwo
model was well studied, including its hyperbolicity and characteristic field for the Riemann
problem. Nevertheless, the \MPN model is far from perfect, and it has some defects in the theoretical
analysis, which limits its application in numerical simulations.

There are a lot of criteria to judge a reduced model. Among them, the following criteria are
basic conditions for a physical model:
\begin{criterion}\label{c1}
    The reduced model is globally hyperbolic.
\end{criterion}
\begin{criterion}\label{c2}
    The characteristic speeds of the reduced model lie in $[-c, c]$.
\end{criterion}

The \cref{c1} uses the following definition.
\begin{definition}[Global hyperbolicity] \label{def:hyperbolicity}
    A system of first order quasi-linear partial differential equations
    \begin{equation*}
        \pd{\bw}{t}+\bA(\bw)\pd{\bw}{z}=0, \quad \bw\in\Omega
    \end{equation*}
    is called \emph{hyperbolic} at the point $\bw_0\in\Omega$ if the matrix $\bA(\bw_0)$ is
    diagonalizable with real eigenvalues. The system is called \emph{globally hyperbolic} if it is
    hyperbolic at each point $\bw\in\Omega$.
\end{definition}
Since the left hand side of the RTE \eqref{eq:RTE} is an advection part, the \cref{c1} is the
necessary condition for the existence of the solution. Thus, the hyperbolicity is a critical
mathematical constraint on the reduced model. The \cref{c2} is a basic physical property of the
reduced model, which can be interpreted as that the information can not travel faster than the speed
of light. However, as will be shown, the \MPN model fails to satisfy these criteria.

\subsubsection{Loss of global hyperbolicity}
In \cite{MPN}, the \MPtwo model was proved to be globally hyperbolic in its realizability domain.
However, the global hyperbolicity fails to be preserved by the \MPN model with $N>2$. In the
following, we take the \MP{3}model as an example to show that the \MPN model fails to satisfy the
\cref{c1}.

Denote the characteristic polynomial of the \MP{3}model as $p_3(\lambda)$, then it depends on
$E_{k}/E_0$, $k=1,2,3$, i.e., $p_3(\lambda) =p_3(E_1/E_0, E_2/E_0,E_3/E_0; \lambda)$. That all the
zeros of $p_3(\lambda)$ are real is a necessary condition for the hyperbolicity.
\Cref{fig:hyperbolicregion} plots the real region (the region that all the zeros of $p_3(\lambda)$
are real) of $p_3(\lambda)$ with some given $E_3 / E_0$. Clearly, the zeros of $p_3(\lambda)$ are
not always real; thus the \MP{3}model is not globally hyperbolic.

\begin{figure}[ht]
  \centering 
  \subfloat[$E_3/E_0$=0]{
  \includegraphics[width=0.33\textwidth,trim={1mm 1mm 14mm 7mm}, clip]{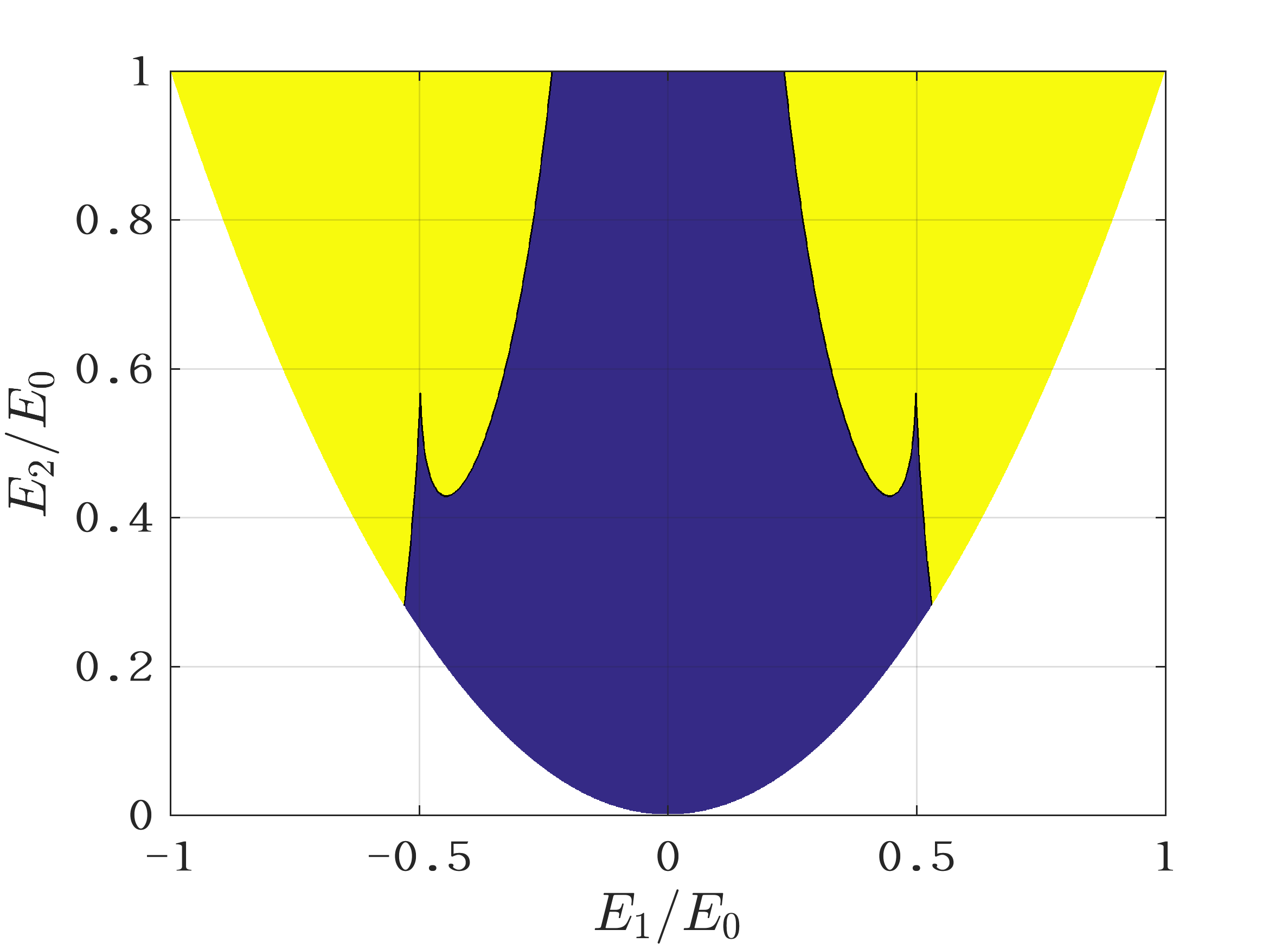}}
  \subfloat[$E_3/E_0$=1/5]{
  \includegraphics[width=0.33\textwidth,trim={1mm 1mm 14mm 7mm}, clip]{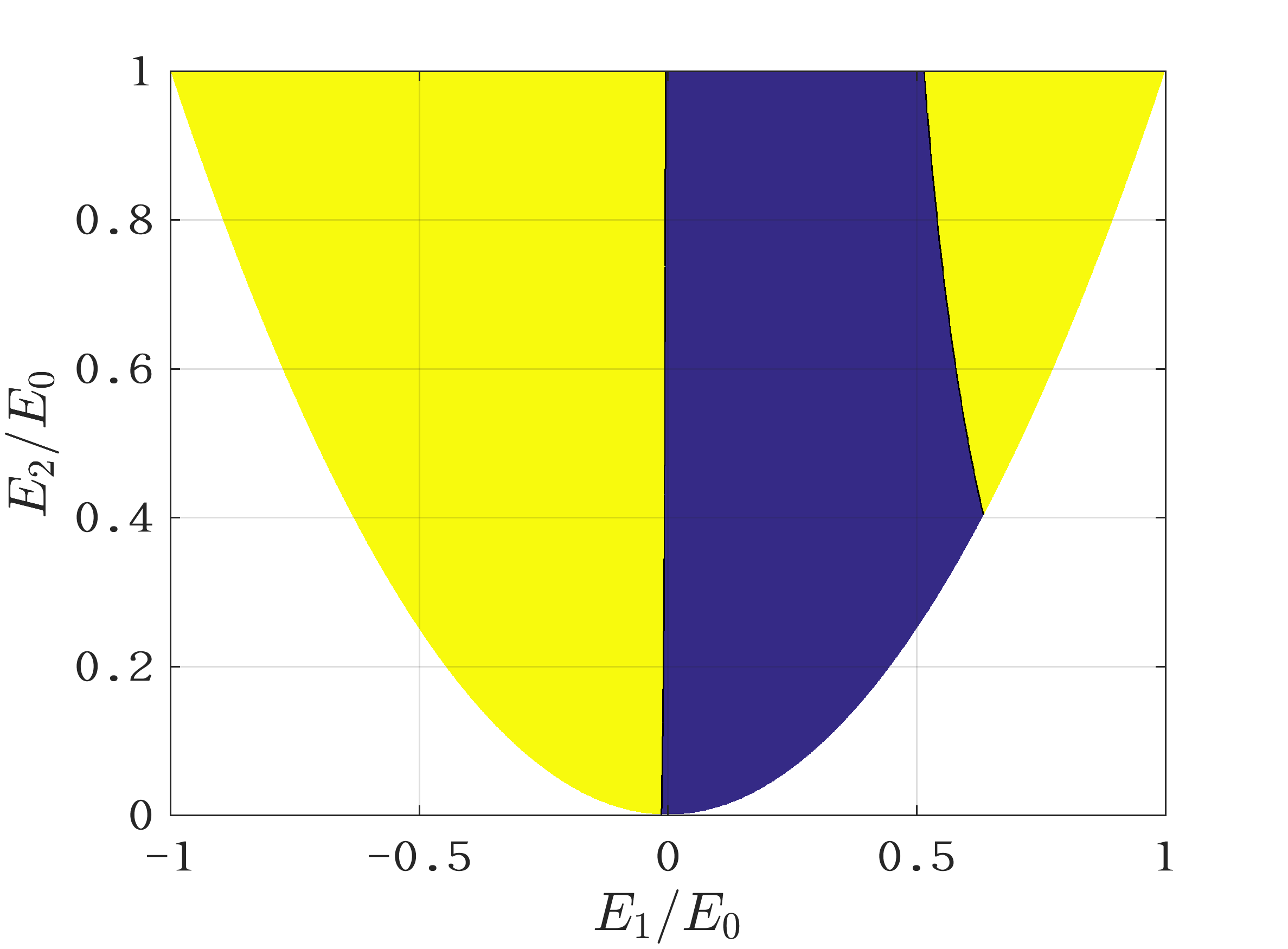}}
  \subfloat[$E_3/E_0$=1/2]{
  \includegraphics[width=0.33\textwidth,trim={1mm 1mm 14mm 7mm}, clip]{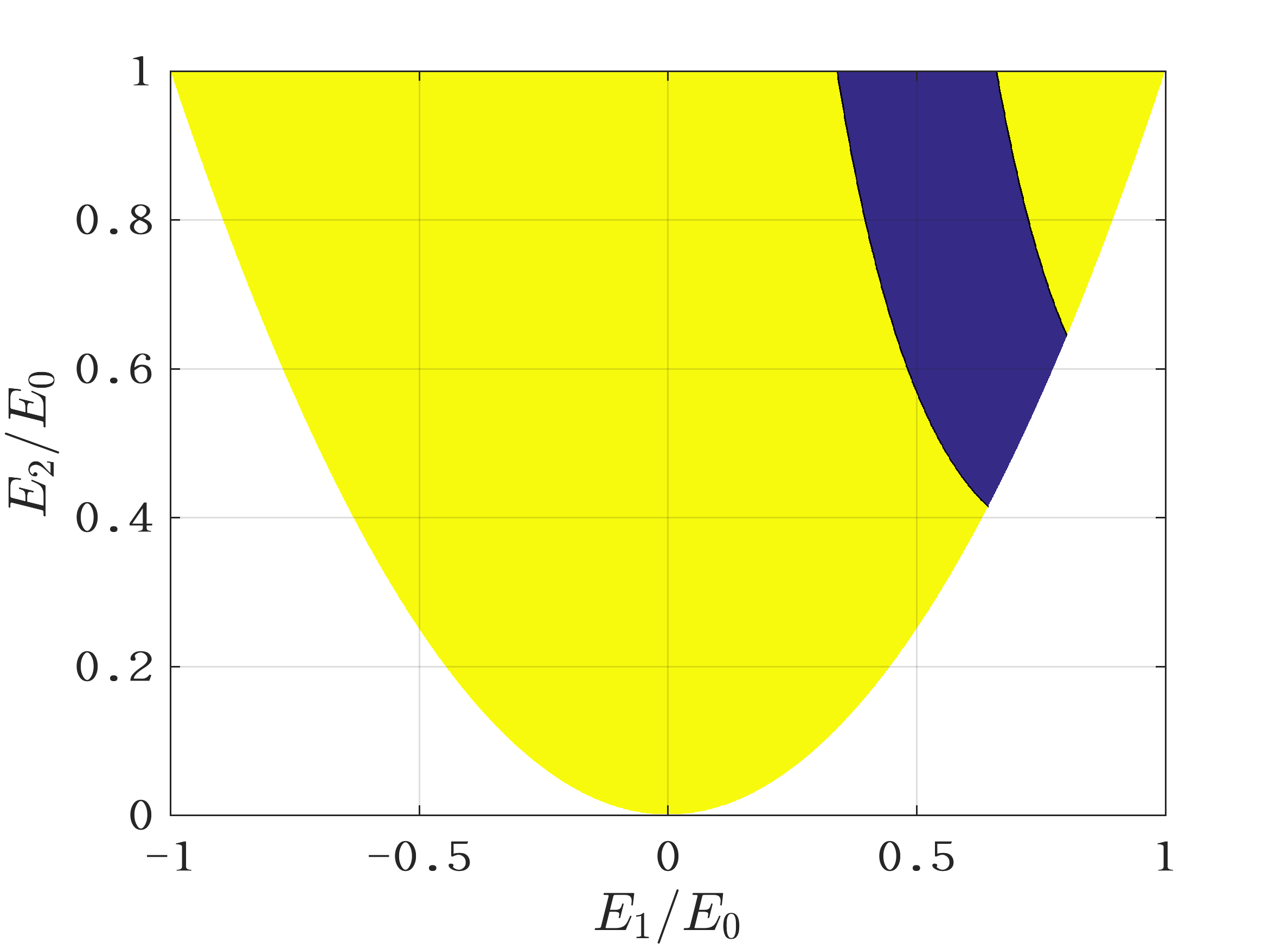}}
  \caption{Real region of the \MP{3}model with respect to $(E_1/E_0, E_2/E_0)$ with given $E_3 /
  E_0$. The blue region is the real region where all the zeros of $p_3(\lambda)$ are real, while the
  yellow region is non-real region where at least one zero of $p_3(\lambda)$ is not real.}
  \label{fig:hyperbolicregion}
\end{figure}

\subsubsection{Unphysical characteristic speed}\label{sec:speed}
We take the \MPtwo model as an example to show that the \MPN model fails to satisfy the \cref{c2}.
Denote the characteristic polynomial of the \MPtwo model by $p_2(\lambda)$.  Since the \MPtwo model
is strictly hyperbolic \cite{MPN}, all the zeros of $p_2(\lambda)$ are real and distinct. We denote
the zeros of $p_2(\lambda)$ by $\lambda_k$, $k=1,2,3$ with $\lambda_1<\lambda_2<\lambda_3$. Clearly,
the characteristic speeds $\lambda_k$ are determined by $E_1 / E_0$ and $E_2 / E_0$, i.e.,
$\lambda_k=\lambda_k(E_1 / E_0, E_2 / E_0)$.  \Cref{fig:characteristicspeedofMPtwo} presents the
profile of $\lambda_k$. One can observe that there is a region for $\lambda_1$ and $\lambda_3$ where
the characteristic speed does not lie in $[-c,c]$.

\begin{figure}[ht]
  \centering 
  \subfloat[$\lambda_1/c$]{
  \includegraphics[width=0.33\textwidth,trim={1mm 1mm 14mm 7mm}, clip]{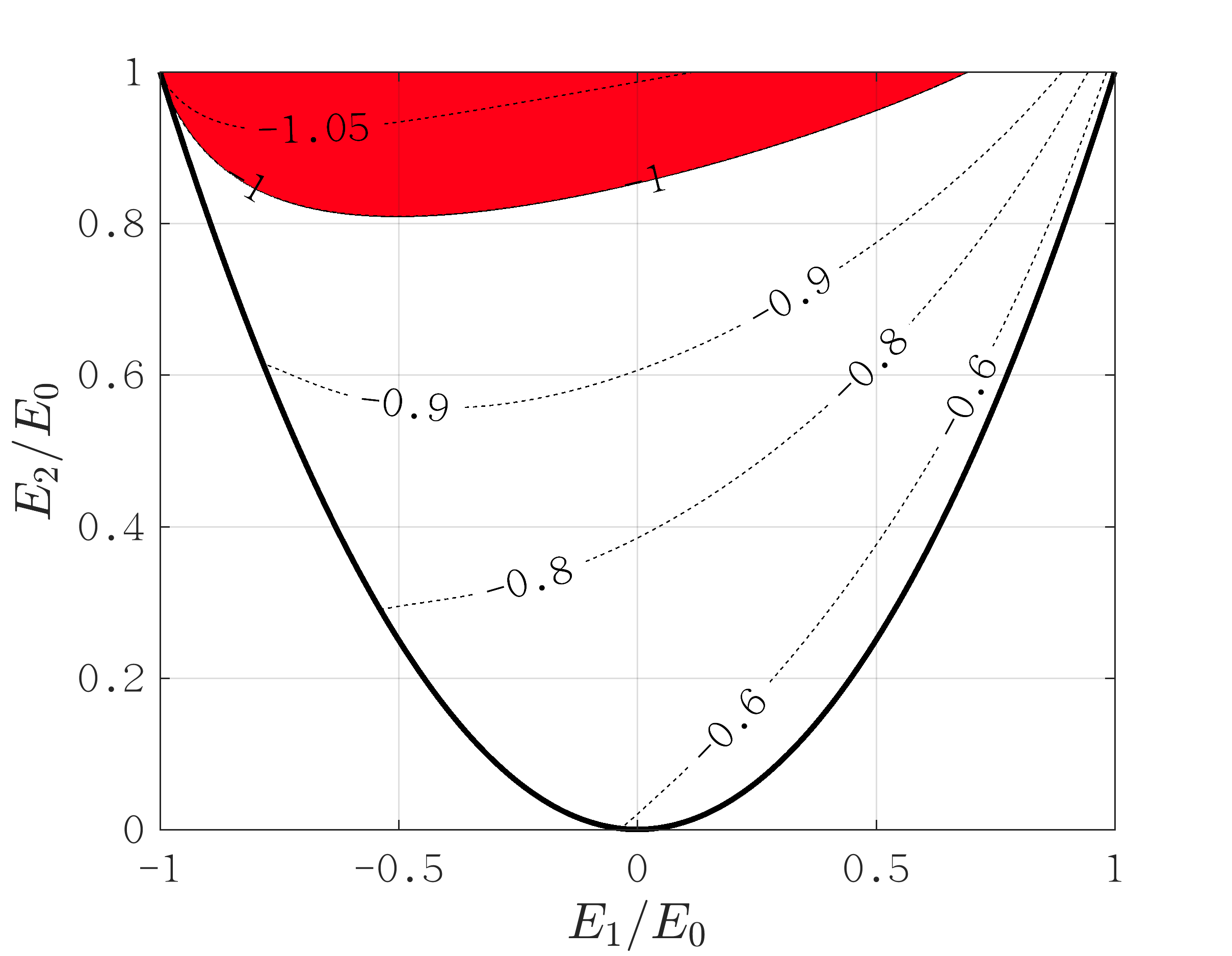}}
  \subfloat[$\lambda_2/c$]{
  \includegraphics[width=0.33\textwidth,trim={1mm 1mm 14mm 7mm}, clip]{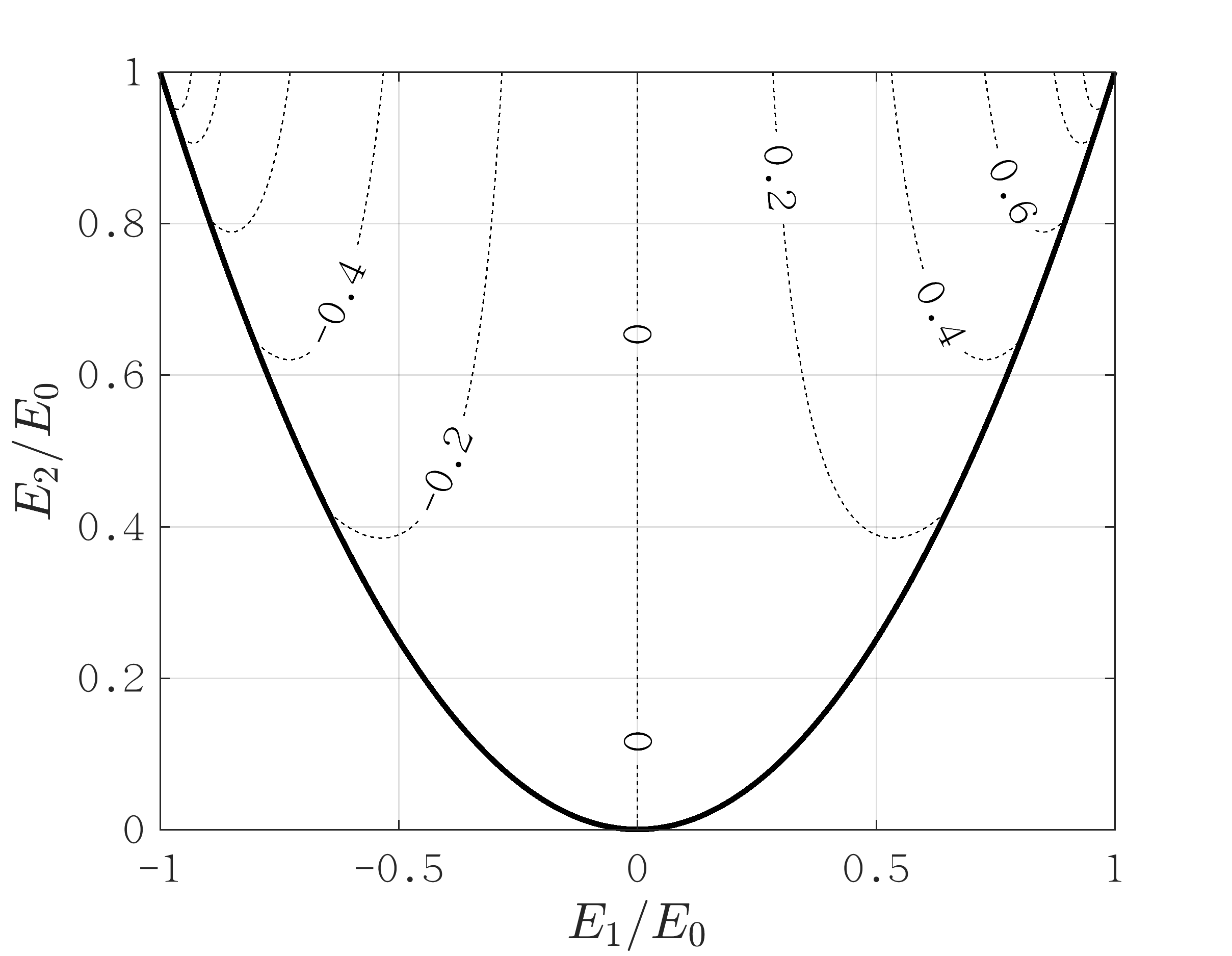}}
  \subfloat[$\lambda_3/c$]{
  \includegraphics[width=0.33\textwidth,trim={1mm 1mm 14mm 7mm}, clip]{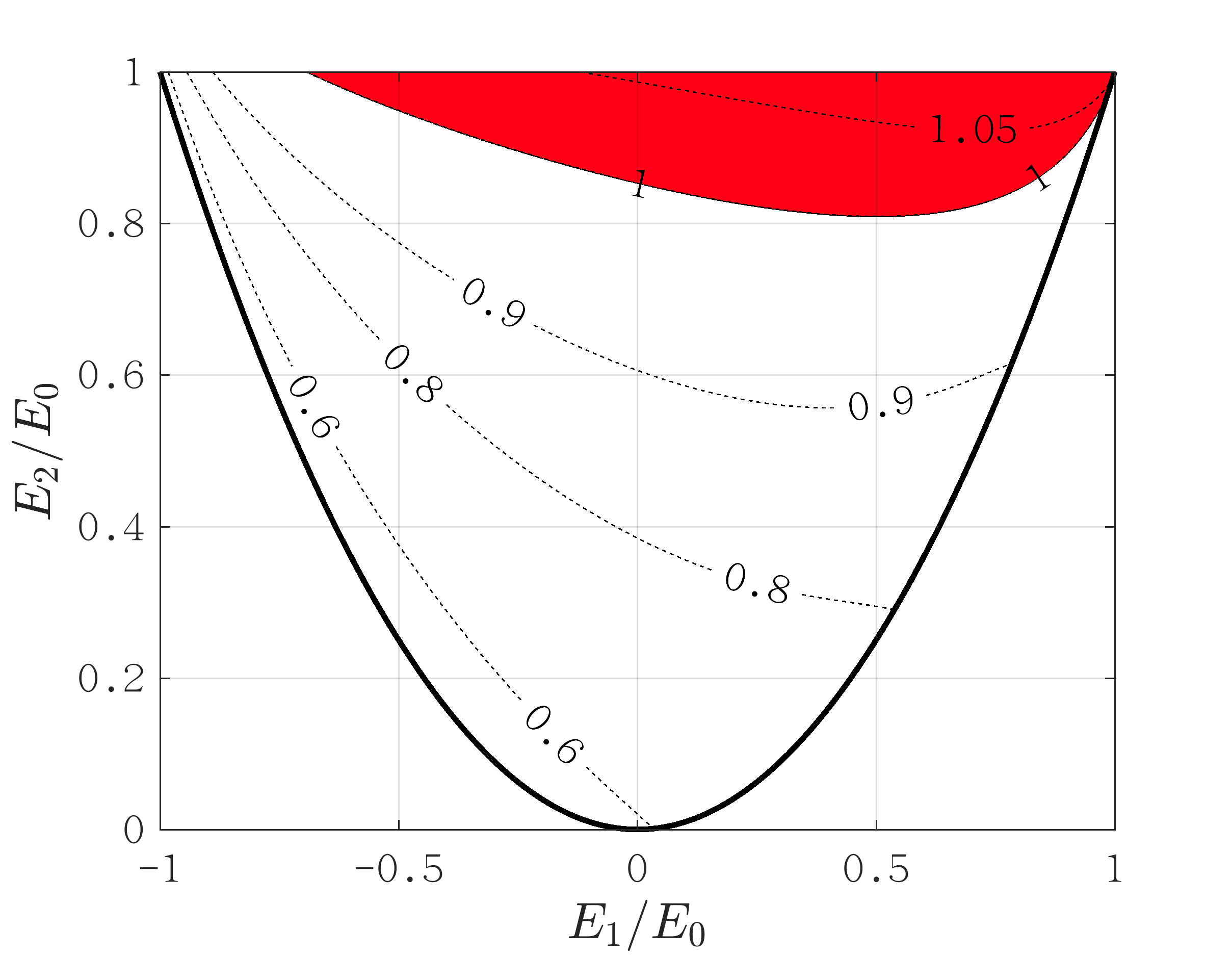}}
  \caption{Contour of characteristic speeds $\lambda_{k}$, $k=1,2,3$ of the \MPtwo model in the
  realizability domain.  In the red region, the characteristic speeds lie beyond $[-c,c]$.}
  \label{fig:characteristicspeedofMPtwo}
\end{figure}

\section{Hyperbolic Regularization}\label{sec:regularization}\label{sec:ms}
In \cref{sec:defects}, it is pointed out that the \MPN model is not globally hyperbolic and its
characteristic speeds can be faster than the speed of light.
This motivates us to explore a regularization for the \MPN model to eliminate such defects.
As is discussed in the introduction, even though the hyperbolicity is a critical property for the 
reduced model, only a few works on the hyperbolic regularization are proposed. The candidate is 
the moment model reduction method in \cite{framework, Fan2015}, which provides a general framework
on deriving hyperbolic reduced models from kinetic equations.


On the other hand, one must follow some criteria in the regularization.
As is discussed in \cite{MPN}, when $N=1$, the \MPN model is the classical $M_1$ model, which
satisfies the \cref{c1,c2}. It is nature to require that the regularization does not change the
\MP{1}model, i.e., 
\begin{criterion}\label{c3}
    The regularization vanishes for the case $N=1$.
\end{criterion}
\begin{remark}
    Basically, we should require that the local linearization of the regularized reduced model 
    around the weight function is the same as that of the \MPN model. Precisely, we assume
    $\hat{I}=\weight + \epsilon g$, where $\epsilon$ is a small quantity, then we use this decomposition
    to linearize the \MPN model and the regularized reduced model by discarding high order terms. The
    resulting two models should be the same. This constraint is a critical condition for the
    regularization.  Otherwise, even $\hat{I}-\weight$ is quite small, the solution of the \MPN model
    and the regularized reduced model can be qualitatively different. Moreover, if one applies the
    Chapman-Enskog expansion \cite{Cowling} on the regularized reduced model with $N=2$ to derive the
    high-order Eddington approximation, the resulting approximation may be not correct
    (Ref. \cite{di2016quantum} presents an example for the quantum gas).
    Although the \cref{c3} is a necessary but not sufficient condition for such constraint, this 
    criterion is easy to check and is enough for verifying the regularization in this paper.
\end{remark}
In the derivation of the reduced model \eqref{eq:momentsystem}, the governing equation of $E_N$ is
the only unclosed equation. Thus it is the only equation one can modify in the regularization.
Precisely, we have the following criterion.
\begin{criterion}\label{c4}
    To regularize the reduced model \eqref{eq:momentsystem}, the governing equation of $E_k$,
    $k=0,\dots,N-1$ can not be changed.
\end{criterion}

In this section, we first show that the regularization framework in \cite{framework, Fan2015} to
regularize the \MPN model to be globally hyperbolic can not fulfill all the constraints of
\cref{c1,c2,c3,c4}. This investigation inspires us to propose a novel regularization by using the
special structure of the weight function and the RTE. Then the novel regularization is proposed, and
the regularized reduced model is well studied.

\subsection{Hyperbolic regularization framework}
We first try to apply the regularization framework in \cite{framework, Fan2015} on the \MPN model
and show that the resulting model dissatisfies the \cref{c1,c2,c3,c4}. It indicates that a globally
hyperbolic regularization on the \MPN model is not trivial.
\subsubsection{Reformulation of reduced model}
In order to apply the regularization framework in \cite{framework, Fan2015}, we introduce some
notations and reformulate the reduced model. Denote $\bbH$ by the space of all the admissible
specific intensity for the RTE and define the Hilbert space 
\begin{equation}
    \spaceH_N := \mathrm{span}\left\{ \Pl_i, i=0,\dots,N\right\}
\end{equation}
with the inner product
\begin{equation}
    \inner{\Phi}{\Psi}=\int_{-1}^1\Phi(\mu)\Psi(\mu)/\weight(\mu)\dd\mu.
\end{equation}
Then for each intensity $I\in\bbH$, the corresponding ansatz $\hat{I}$ \eqref{eq:ansatz} for the
\MPN model is in the space $\spaceH_N$. We define a projection from the space $\bbH$ to the space
$\spaceH_N$ as 
\begin{equation}\label{eq:ansatz2}
    \begin{aligned}
        \mP: I&\rightarrow \hat{I}=\sum_{i=0}^Nf_i\Pl_i,\quad
        f_i= \frac{\inner{I}{\Pl_i}}{\inner{\Pl_i}{\Pl_i}} = 
        \frac{\int_{-1}^1I\pl_i\dd\mu}{\mK_{i,i}},
    \end{aligned}
\end{equation}
where $\mK_{i,i}$ is defined in \eqref{eq:Gram-Schmidt}. Since the basis function is the product of
the weight function and the orthogonal polynomial, the projection $\mP$ is an orthogonal projection.

With the upper notations, the reduced model \eqref{eq:momentsystem} can be equivalently expressed as
\begin{equation}\label{eq:ms0}
    \inner{\mu^k\weight}{\frac{1}{c}\pd{\mP I}{t} +\mu\pd{\mP I}{z}} = \inner{\mu^k\weight}{\mS(\mP
    I)},\quad k=0,\dots,N.
\end{equation}
Since the projection $\mP$ is an orthogonal projection, we have
\begin{equation}\label{eq:ms}
    \inner{\mu^k\weight}{\frac{1}{c}\mP\pd{\mP I}{t} + \mP\mu\pd{\mP I}{z}} =
    \inner{\mu^k\weight}{\mP\mS(\mP I)},\quad k=0,\dots,N.
\end{equation}
For the simplicity of notations, we write the \MPN model \eqref{eq:ms} formally as
\begin{equation}\label{eq:P_mpn}
    \frac{1}{c}\mP\pd{\mP I}{t} + \mP\mu\pd{\mP I}{z} = \mP\mS(\mP I).
\end{equation}
Moreover, noticing that $\{\mu^k\weight, k=0,\dots,N\}$ and $\{\Pl_k, k=0,\dots,N\}$ are two sets of
basis function of $\spaceH$, we can also rewrite the reduced model \eqref{eq:ms} equivalently as
\begin{equation}\label{eq:ms_basis}
    \inner{\Pl_k}{\frac{1}{c}\pd{\mP I}{t} +\mu\pd{\mP I}{z}} = 
    \inner{\Pl_k}{\mS(\mP I)},\quad k=0,\dots,N.
\end{equation}

We note that all of \eqref{eq:ms0}, \eqref{eq:P_mpn} and \eqref{eq:ms_basis} are different forms of a
same model. 
The form \eqref{eq:P_mpn} is an abbreviation of \eqref{eq:ms} and is convenient to investigate the
hyperbolicity regularization and the differences between models, the form \eqref{eq:ms0} is
beneficial to analysis the reduced model in the form \eqref{eq:momentsystem}, and the form
\eqref{eq:ms_basis} is good to study the hyperbolicity of the model.

Next, we derive the reduced model in the quasi-linear form based on \eqref{eq:ms_basis}.
In the ansatz of the \MPN model \eqref{eq:ansatz}, there are many variables, including $f_i$,
$i=0,\dots,N$ and $\alpha$. Noticing \eqref{eq:f1}, the unknown variables are $f_0, \alpha, f_2,
\dots,f_N$. We arrange them into a vector and denote it by $\bw=(f_0,\alpha,f_2,\dots,f_N)^T$.
Since the basis $\Pl_i$, $i=0,\dots,N$ only depend on the variables $\alpha$, we let
\begin{equation*}
    \Pdl_i(\mu) := \od{\Pl_i(\mu)}{\alpha}.
\end{equation*}
Here we do not care about the concrete form of $\Pdl_i$, so its expression is omitted.
The derivation part can be written as
\begin{equation}\label{eq:derivative}
    \pd{\mP I}{s} = \sum_{i=0}^N\left( \pd{f_i}{s}\Pl_i+f_i\pd{\Pl_i}{s} \right)
    = \sum_{i=0}^N\left( \pd{f_i}{s}\Pl_i+f_i\pd{\alpha}{s}\Pdl_i \right), \quad s=t, z.
\end{equation}
Direct calculations yield
\begin{equation}\label{eq:p2d}
    \mP\pd{\mP I}{t} = (\boldsymbol{\Pl})^T\bD\pd{\bw}{t},\quad
    \mP\mu\pd{\mP I}{z} = (\boldsymbol{\Pl})^T\bB\pd{\bw}{z},
\end{equation}
where $\boldsymbol{\Pl}=(\Pl_0,\dots,\Pl_N)^T$ and the matrices
$\bD=(D_{i,j})_{i,j=0,\dots,N}\in\bbR^{(N+1)\times (N+1)}$ and
$\bB=(B_{i,j})_{i,j=0,\dots,N}\in\bbR^{(N+1)\times (N+1)}$ are defined by
\begin{align}
  \label{eq:matrixDelements_MP2}
    D_{i,j} &= \begin{cases}
        \delta_{ij},    & j\neq 1,\\
        \sum_{k=0}^N\inner{\Pl_i}{\Pdl_k}f_k / \mK_{i,i},   &   j=1,
    \end{cases}\\
  \label{eq:matrixBelements_MP2}
    B_{i,j} &= \begin{cases}
        \inner{\Pl_i}{\mu\Pl_j}/\mK_{i,i},   &   j\neq 1,\\
        \sum_{k=0}^N\inner{\Pl_i}{\mu\Pdl_k}f_k / \mK_{i,i},   &   j=1.
    \end{cases}
\end{align}
Let $\bS=\left(\inner{\Pl_i}{\mS(\mP I)}/\mK_{i,i}\right)_{i=0,\dots,N}$, then 
the reduced model \eqref{eq:ms_basis} can be reformulated as
\begin{equation}\label{eq:ms_quasi}
    \frac{1}{c}\bD\pd{\bw}{t}+\bB\pd{\bw}{z} = \bS.
\end{equation}

Clearly, \eqref{eq:ms_quasi} is the quasi-linear form of \eqref{eq:momentsystem}.
The variables vector $\bw$ can be uniquely determined by the ansatz \eqref{eq:ansatz2} and vice
versa. Thus we can treat $\bw$ as the vector representation of $\mP I$ in the space $\spaceH_N$. By
noticing \eqref{eq:p2d}, $\bD\pd{\cdot}{t}$ and $\bB\pd{\cdot}{z}$ are the matrix representation of
the operators
$\mP\pd{\cdot}{t}$ and $\mP\mu\pd{\cdot}{z}$ in \eqref{eq:P_mpn} in the space $\spaceH_N$ with the
basis $\Pl_k$.
We emphasis these relationships using the following formula
\begin{equation}\label{eq:relationship_dp}
    \bw\leftrightarrow \mP I,\quad
    \bD\pd{\cdot}{t}\leftrightarrow \mP\pd{\cdot}{t},\quad
    \bB\pd{\cdot}{z}\leftrightarrow \mP\mu\pd{\cdot}{z},
    \quad \text{in } \spaceH_N \text{ with basis } \Pl_k.
\end{equation}

\subsubsection{Hyperbolic model reduction framework}\label{sec:framework}
The hyperbolic model reduction framework in \cite{framework, Fan2015} suggests to add a more
projection between the operators $\mu\cdot$ and $\pd{\cdot}{z}$ to regularize the \MPN model to be
globally hyperbolic, and the resulting model is
\begin{equation}\label{eq:ms_framework}
    \frac{1}{c}\mP\pd{\mP I}{t} + \mP\mu\mP\pd{\mP I}{z} = \mP\mS(\mP I).
\end{equation}
Noticing \eqref{eq:p2d}, we can obtain that
\begin{equation*}
    \begin{aligned}
        \inner{\boldsymbol{\Pl}}{\mP\pd{\mP I}{t}} &= 
        \inner{\boldsymbol{\Pl}}{(\boldsymbol{\Pl})^T}\bD\pd{\bw}{t}, \\
        \inner{\boldsymbol{\Pl}}{\mu\mP\pd{\mP I}{z}} &= 
        \inner{\mu\boldsymbol{\Pl}}{(\boldsymbol{\Pl})^T\bD\pd{\bw}{z}}
        = \inner{\mu\boldsymbol{\Pl}}{(\boldsymbol{\Pl})^T}\bD\pd{\bw}{z}.
    \end{aligned}
\end{equation*}
Let $\bLambda=\mathrm{diag}(\mK_{0,0},\dots,\mK_{N,N})\in\bbR^{(N+1)\times (N+1)}$ and
$\bM=\bLambda^{-1} \inner{\mu\boldsymbol{\Pl}}{(\boldsymbol{\Pl})^T}$, then the regularized reduced
model corresponding to \eqref{eq:ms_framework} can be written as
\begin{equation}\label{eq:ms_framework_quasi}
    \frac{1}{c}\bD\pd{\bw}{t}+\bM\bD\pd{\bw}{z} = \bS.
\end{equation}
Similarly as \eqref{eq:relationship_dp}, $\bM$ is the matrix representation of the operator
$\mP\mu\cdot$ in the space $\spaceH_N$ with the basis $\Pl_k$, i.e.,
\begin{equation}
    \bM\leftrightarrow \mP\mu\cdot, \quad \text{in } \spaceH_N \text{ with basis } \Pl_k.
\end{equation}
Since the matrix $\inner{\mu\boldsymbol{\Pl}}{(\boldsymbol{\Pl})^T}$ is symmetric and the matrix
$\bLambda$ is symmetric positive definite, the matrix $\bM$ is real diagonalizable.
Hence, the model \eqref{eq:ms_framework_quasi} is globally hyperbolic, i.e., satisfying the
\cref{c1}.  We also claim that the model satisfies the \cref{c2}. Actually, since
$\Pl_i=\weight\pl_i$ and $\pl_i$ is orthogonal polynomials, one can obtain that the characteristic
polynomial of $\bM$ is $\pl_{N+1}$, whose zeros lie in $[-1,1]$. We leave more details in
\cref{sec:new_reg}.

The hyperbolic reduced model reduction framework in \cite{framework,Fan2015} indeed regularizes
the \MPN model to be hyperbolic, however, it fails to satisfy the \cref{c3}.
For exampe, $N=1$, we have $\mP I=f_0\weight$, then one only need to check whether 
\begin{equation}
    \inner{\weight\mu^k}{\mP\mu\mP\pd{\mP I}{z} - \mP\mu\pd{\mP I}{z}}=
    \int_{-1}^1\mu^{k+1}\left(\mP\pd{\mP I}{z} - \pd{\mP I}{z}\right)\dd\mu,\quad 
    k=0,1
\end{equation}
are both zero for any $\alpha\in(-1,1)$.
Unfortunately, direct calculations yield
\begin{equation}
    \int_{-1}^1\mu^2 \left(\mP\pd{\weight}{z}-\pd{\weight}{z}\right)\dd\mu
    =\frac{(4\alpha^2-12)\ln\left( \frac{1+\alpha}{1-\alpha} \right)+24\alpha}
    {3\alpha^4 (1-\alpha^2)^2}\pd{\alpha}{z}\neq0.
\end{equation}

\subsection{Novel hyperbolic regularization}\label{eq:regularization}
The failure of the existing regularization methods indicates that it is not trivial to regularize the
\MPN model to be globally hyperbolic in the constraints of the \cref{c1,c2,c3,c4}. In the following, 
we aim to construct a novel hyperbolic regularization for the \MPN model.

\subsubsection{Reformulation of the reduced model}
Note that the derivation of the weight function \eqref{eq:weight} with respect to $\alpha$ is
\begin{equation*}
    \pd{\weight}{\alpha} = \frac{-4\mu}{(1+\alpha\mu)^5}.
\end{equation*}
We introduce a new weight function 
\begin{equation}
    \weightn=\frac{1}{(1+\alpha\mu)^5}
\end{equation}
and define a series of monic orthogonal polynomials in the interval $[-1,1]$ with respect to the
weight function $\weightn(\mu)$ recursively as
\begin{align*}
  \pln_0(\mu) = 1, \quad \pln_j(\mu) = \mu^j - \sum_{k=0}^{j-1}
  \frac{\mKn_{j,k}}{\mKn_{k,k}} \pln_k(\mu), \quad j\geq 1,
  \quad
  \mKn_{j,k} = \int_{-1}^1\mu^j\pln_k(\mu)\weight(\mu)\dd\mu.
\end{align*}
Hereafter all the analogous notations with respect to the weight $\weightn$ will be marked by
$\tilde{\cdot}$.
Let $\Pln_i(\mu)=\weightn(\mu)\pln_i(\mu)$ and define the Hilbert space 
\begin{equation}
    \spaceHn_N := \mathrm{span}\left\{ \Pln_i, i=0,\dots,N\right\}
\end{equation}
with the inner product
\begin{equation}
    \innern{\Phi}{\Psi}=\int_{-1}^1\Phi(\mu)\Psi(\mu)/\weightn(\mu)\dd\mu.
\end{equation}
The two spaces $\spaceH_N$ and $\spaceHn_N$ have the following relationship.
\begin{lemma} \label{lem:relationship_spaces}
  For any $\Phi\in\spaceH_N$, we have
  \begin{equation}\label{eq:subset}
      \Phi\in\spaceHn_{N+1} ~\text{ and }~ \pd{\Phi}{\alpha}\in\spaceHn_{N+1}.
  \end{equation}
\end{lemma}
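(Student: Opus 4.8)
The plan is to sidestep the $\alpha$-dependent orthogonal-polynomial bases entirely and instead characterize both spaces as sets of rational functions with a fixed denominator. First I would observe that since $\pl_0,\dots,\pl_N$ are monic of degrees $0,1,\dots,N$, they span $\mathbb{P}_N$, the polynomials of degree at most $N$ (only this graded, monic structure is needed, not the orthogonality). Because $\Pl_i=\pl_i\weight$ with $\weight=(1+\alpha\mu)^{-4}$, this yields the clean description
\begin{equation*}
    \spaceH_N = \left\{ \frac{p(\mu)}{(1+\alpha\mu)^4} : p\in\mathbb{P}_N \right\}.
\end{equation*}
The identical argument applied to $\pln_0,\dots,\pln_{N+1}$ together with $\weightn=(1+\alpha\mu)^{-5}$ gives
\begin{equation*}
    \spaceHn_{N+1} = \left\{ \frac{q(\mu)}{(1+\alpha\mu)^5} : q\in\mathbb{P}_{N+1} \right\}.
\end{equation*}
Once the two spaces are written in this form, the lemma reduces to elementary degree bookkeeping, and the two claimed inclusions decouple.

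For the first inclusion I would take an arbitrary $\Phi\in\spaceH_N$, write $\Phi=p/(1+\alpha\mu)^4$ with $\deg p\le N$, and multiply numerator and denominator by $(1+\alpha\mu)$:
\begin{equation*}
    \Phi = \frac{(1+\alpha\mu)\,p(\mu)}{(1+\alpha\mu)^5}.
\end{equation*}
Since $(1+\alpha\mu)p$ has degree at most $N+1$, the characterization above immediately gives $\Phi\in\spaceHn_{N+1}$.

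For the derivative claim I would differentiate the same representation with respect to $\alpha$. Allowing the numerator coefficients to depend on $\alpha$, so the statement is robust to how a curve $\alpha\mapsto\Phi$ is represented, the quotient rule gives
\begin{equation*}
    \pd{\Phi}{\alpha}
    = \frac{1}{(1+\alpha\mu)^4}\pd{p}{\alpha}
      - \frac{4\mu\,p(\mu)}{(1+\alpha\mu)^5}
    = \frac{(1+\alpha\mu)\,\partial_\alpha p - 4\mu\,p}{(1+\alpha\mu)^5}.
\end{equation*}
Here $\partial_\alpha p$ is again a polynomial of degree at most $N$ in $\mu$, so the numerator has degree at most $N+1$ while the denominator is exactly $(1+\alpha\mu)^5$; hence $\pd{\Phi}{\alpha}\in\spaceHn_{N+1}$. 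The single factor $-4\mu/(1+\alpha\mu)$ produced by differentiating $\weight$ is precisely what simultaneously raises the denominator power from $4$ to $5$ and the numerator degree from $N$ to $N+1$, explaining why the target space is $\spaceHn_{N+1}$ and not $\spaceHn_N$.

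There is no deep obstacle here; the only point requiring care is conceptual rather than computational. I must characterize the spaces independently of the orthogonalization, since $\pl_i$ and $\pln_i$ themselves vary with $\alpha$, and working directly through the basis would entangle $\od{\Pl_i}{\alpha}$ with the $\alpha$-dependence of $\pl_i$. Passing to the numerator/denominator description removes this entanglement, after which the degree count is immediate and uniform in $\alpha\in(-1,1)$.
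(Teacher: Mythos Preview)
Your proof is correct and essentially the same as the paper's: both rest on the identities $\weight=(1+\alpha\mu)\weightn$ and $\partial_\alpha\weight=-4\mu\,\weightn$, followed by a degree count on the polynomial numerator. The paper carries this out on the basis elements $\Pl_i$ individually, whereas you first recast $\spaceH_N$ and $\spaceHn_{N+1}$ as rational-function spaces with fixed denominators; the mathematical content is identical.
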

\begin{proof}
    We only need to check that \eqref{eq:subset} holds for $\Pl_i$, $i=0,\dots,N$.
    Note that
    \begin{equation*}
        \weight = (1+\alpha \mu)\weightn,\quad 
        \pd{\weight}{\alpha} = -4\mu\weightn.
    \end{equation*}
    We have 
    \begin{align}
        \label{eq:Pl2Pln}
        \Pl_i&=\weight\pl_i=(1+\alpha\mu)\pl_i\weightn\in\spaceHn_{N+1}, \quad i=0,\dots,N,\\
        \pd{\Pl_i}{\alpha} &=\pl_i\pd{\weight}{\alpha}+\pd{\pl_i}{\alpha}\weight
        =-4\mu\pl_i\weightn+(1+\alpha\mu)\pd{\pl_i}{\alpha}\weightn, \quad i=0,\dots,N.
    \end{align}
    Since $\pl_i$ is a monic polynomial of degree $i$ with its coefficient dependent on $\alpha$, 
    $\pd{\pl_i}{\alpha}$ is a polynomial whose degree is no more than $i-1$. Thus we have 
    $\pd{\Pl_i}{\alpha}\in\spaceHn_{N+1}$, $i=0,\dots,N$.
    This completes the proof.
\end{proof}
By noticing \eqref{eq:derivative}, \cref{lem:relationship_spaces} indicates that $\pd{\mP
I}{t}\in\spaceHn_{N+1}$. This is an important property of the space $\spaceHn_{N}$.
In the later of this section, we will show that this property is essential for the \cref{c4} for the
our regularization.
We define a projection from  the space $\bbH$ to the space $\spaceHn_N$ as
\begin{equation}\label{eq:ansatzn}
    \begin{aligned}
        \mPn: I&\rightarrow \sum_{i=0}^Ng_i\Pln_i,\quad
        g_i= \frac{\innern{I}{\Pln_i}}{\innern{\Pln_i}{\Pln_i}} = 
        \frac{\int_{-1}^1I\pln_i\dd\mu}{\mKn_{i,i}}.
    \end{aligned}
\end{equation}
Analogously as the projection $\mP$, the projection $\mPn$ is also an orthogonal projection.
We point out that the inner products of the spaces $\spaceH_N$ and $\spaceHn_N$ satisfy the
relationship
\begin{equation}\label{eq:relationship_inner}
    \inner{\weight\mu^k}{I}=\int_{-1}^1\mu^k I\dd\mu
    =\innern{\weightn\mu^k}{I},\quad k=0,\dots,N, ~ \forall I\in\bbH.
\end{equation}
This relationship is fundamental to study the reduced model \eqref{eq:ms0}. Actually, we can rewrite 
\eqref{eq:ms0} as
\begin{equation}\label{eq:ms_n}
    \innern{\mu^k\weightn}{\frac{1}{c}\pd{\mP I}{t} +\mu\pd{\mP I}{z}} = 
    \innern{\mu^k\weightn}{\mS(\mP I)},\quad k=0,\dots,N,
\end{equation}
which can further be written as the following form by noticing $\mPn$ is an orthogonal projection
\begin{equation}\label{eq:ms_n_P}
    \innern{\mu^k\weightn}{\frac{1}{c}\mPn\pd{\mP I}{t} +\mPn\mu\pd{\mP I}{z}} =
    \innern{\mu^k\weightn}{\mPn\mS(\mP I)},\quad k=0,\dots,N,
\end{equation}
We abbreviate \eqref{eq:ms_n_P} as
\begin{equation}\label{eq:Pn_mpn}
    \frac{1}{c}\mPn\pd{\mP I}{t}+\mPn\mu\pd{\mP I}{z} = \mPn\mS(\mP I).
\end{equation}
\begin{remark}\label{rk:Galerkin}
It is worth to point out again that the system \eqref{eq:P_mpn} and the system \eqref{eq:Pn_mpn} are
exactly same. This can be understood in the viewpoint of the Galerkin method. For the system
\eqref{eq:P_mpn}, both the trial and test function spaces are $\spaceH_N$ with the inner product
$\inner{\cdot}{\cdot}$; while for the system \eqref{eq:Pn_mpn}, the trial function space is
$\spaceH_N$ and the test function space is $\spaceHn_N$ with the inner product
$\innern{\cdot}{\cdot}$. The two systems are same due to the relationship
\eqref{eq:relationship_inner}. Both these two methods are natural and clear. The advantage of the
space $\spaceHn_N$ is its good property \cref{lem:relationship_spaces}.
\end{remark}

\subsubsection{Hyperbolic regularization}\label{sec:new_reg}
In \cref{sec:framework}, a direct application of the framework in \cite{framework,Fan2015} fails to 
regularize the \MPN model to be hyperbolic in the constraints of \cref{c1,c2,c3,c4}. Here we
restudy the system in the space $\spaceHn_N$. By adding a more projection between the operators
$\mu\cdot$ and $\pd{\cdot}{z}$ in \eqref{eq:Pn_mpn}, we obtain
\begin{equation}\label{eq:new_regularization}
    \frac{1}{c}\mPn\pd{\mP I}{t}+\mPn\mu\mPn\pd{\mP I}{z} = \mPn\mS(\mP I).
\end{equation}
Next we study the regularized system \eqref{eq:new_regularization} and check the \cref{c1,c2,c3,c4}
one by one.
Firstly, we present the relationship between the two set of functions $\Pl_k$ and $\Pln_k$ in the
following lemma.
\begin{lemma}\label{lem:relationship_polynomials}
    The functions $\Pl_k$ can be represented by the function $\Pln_k$ by the following relationships
    \begin{align} 
        \label{eq:recurrence_polynomial} \Pl_k &= \alpha\Pln_{k+1} + \beta_k\Pln_k,\quad k\in\bbN,\\
        \label{eq:d_pl} \pd{\Pl_k}{\alpha} &= -4\Pln_{k+1} + \gamma_k\Pln_k,
        \quad k\in\bbN,
    \end{align}
    where $\beta_k=\dfrac{\mK_{k,k}}{\mKn_{k,k}}$ and
    $\gamma_k=\dfrac{1}{\mKn_{k,k}}\pd{\mK_{k,k}}{\alpha}$.
\end{lemma}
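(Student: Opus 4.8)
The plan is to read both \eqref{eq:recurrence_polynomial} and \eqref{eq:d_pl} as the expansion of a function lying in $\spaceHn_{k+1}$ in the orthogonal basis $\{\Pln_j\}_{j=0}^{k+1}$, and to pin down the coefficients by orthogonality. By \cref{lem:relationship_spaces} (applied with $N=k$) both $\Pl_k$ and $\pd{\Pl_k}{\alpha}$ indeed lie in $\spaceHn_{k+1}$, so each is $\sum_{j=0}^{k+1}(\cdot)\Pln_j$ with coefficients given by the $\innern{\cdot}{\cdot}$-projection onto $\Pln_j$. The single mechanism that makes everything collapse is that projecting onto $\Pln_j$ produces a $\weight$-weighted integral of $\pl_k$ against a polynomial of degree $j$, and there I may invoke the orthogonality of $\pl_k$ with respect to $\weight$ to kill every contribution of degree below $k$. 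I would use throughout the pointwise identities recorded in the proof of \cref{lem:relationship_spaces}, namely $\weight=(1+\alpha\mu)\weightn$ and $\pd{\weight}{\alpha}=-4\mu\weightn$, together with the degree bounds $\deg\pd{\pl_k}{\alpha}\le k-1$ and $\deg\pd{\pln_j}{\alpha}\le j-1$ that hold because $\pl_k,\pln_j$ are monic.

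For \eqref{eq:recurrence_polynomial} I write $\Pl_k=c_{k+1}\Pln_{k+1}+\dots+c_0\Pln_0$ with
\begin{equation*}
    c_j=\frac{1}{\mKn_{j,j}}\int_{-1}^1\frac{\Pl_k\,\Pln_j}{\weightn}\dd\mu
       =\frac{1}{\mKn_{j,j}}\int_{-1}^1\pl_k\,\pln_j\,\weight\dd\mu,
\end{equation*}
where the second equality uses $\Pl_k=\weight\pl_k$ and $\Pln_j=\weightn\pln_j$. Since $\deg\pln_j=j$, orthogonality of $\pl_k$ against $\weight$ gives $c_j=0$ for $j<k$, so $\Pl_k=c_{k+1}\Pln_{k+1}+c_k\Pln_k$. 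Dividing by $\weightn$ yields $(1+\alpha\mu)\pl_k=c_{k+1}\pln_{k+1}+c_k\pln_k$; comparing the degree-$(k+1)$ coefficients (the left side has leading coefficient $\alpha$, and $\pln_{k+1}$ is monic) forces $c_{k+1}=\alpha$. Finally, writing $\pln_k=\pl_k+(\text{degree}<k)$ gives $\int_{-1}^1\pl_k\pln_k\weight\dd\mu=\int_{-1}^1\pl_k^2\weight\dd\mu=\mK_{k,k}$, so $c_k=\mK_{k,k}/\mKn_{k,k}=\beta_k$.

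For \eqref{eq:d_pl} I proceed the same way, but now the projection coefficient must be coaxed into the form $\gamma_k=\pd{\mK_{k,k}}{\alpha}/\mKn_{k,k}$. Writing $\pd{\Pl_k}{\alpha}=\sum_j d_j\Pln_j$ and differentiating the product $\Pl_k=\weight\pl_k$ inside the projection, I obtain
\begin{equation*}
    d_j\,\mKn_{j,j}=\int_{-1}^1\frac{\Pln_j}{\weightn}\pd{\Pl_k}{\alpha}\dd\mu
    =\pd{}{\alpha}\int_{-1}^1\pl_k\,\pln_j\,\weight\dd\mu
      -\int_{-1}^1\pl_k\,\pd{\pln_j}{\alpha}\,\weight\dd\mu.
\end{equation*}
For $j<k$ the first term vanishes because its integrand is $\weight$-orthogonal to $\pl_k$ for every $\alpha$, and the second vanishes because $\deg\pd{\pln_j}{\alpha}\le j-1<k$; hence $d_j=0$ and $\pd{\Pl_k}{\alpha}=d_{k+1}\Pln_{k+1}+d_k\Pln_k$. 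Dividing by $\weightn$ and reading off the degree-$(k+1)$ coefficient of $-4\mu\pl_k+(1+\alpha\mu)\pd{\pl_k}{\alpha}$ gives $d_{k+1}=-4$. For $j=k$ the correction term again drops out by the degree bound, while the surviving term is $\pd{}{\alpha}\mK_{k,k}$, so $d_k=\gamma_k$.

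The routine parts are the leading-coefficient bookkeeping and the reduction of $\innern{\cdot}{\cdot}$-projections to $\weight$-weighted integrals; the step I expect to demand the most care is the second identity, where the basis functions $\pln_j$ carry their own $\alpha$-dependence, so naively differentiating the projection would generate the spurious term $\int_{-1}^1\pl_k\,\pd{\pln_j}{\alpha}\,\weight\dd\mu$. The key observation that defuses this is purely a degree count: that term pairs $\pl_k$ against a polynomial of degree at most $j-1\le k-1$ and is therefore annihilated by orthogonality, so differentiating $\int_{-1}^1\pl_k\pln_j\weight\dd\mu$ in $\alpha$ contributes nothing beyond $\pd{\mK_{k,k}}{\alpha}$ and the clean coefficient $\gamma_k$ emerges.
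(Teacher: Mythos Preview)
Your proof is correct and follows essentially the same approach as the paper: expand in the basis $\{\Pln_j\}_{j=0}^{k+1}$ using \cref{lem:relationship_spaces}, kill the low-order coefficients by orthogonality, and identify the top two coefficients by leading-term comparison and direct evaluation. The only cosmetic difference is that for \eqref{eq:d_pl} the paper differentiates the moment orthogonality $\int_{-1}^1\Pl_k\mu^j\dd\mu=0$ directly to obtain $\int_{-1}^1\pd{\Pl_k}{\alpha}\mu^j\dd\mu=0$ for $j<k$, whereas you project onto $\pln_j$ and handle its $\alpha$-dependence via the product rule; both routes collapse to the same orthogonality argument, and your computation of $d_k=\gamma_k$ is exactly the one the paper leaves implicit.
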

\begin{proof}
    The orthogonality of $\Pl_k$ and $\Pln_k$ indicates that
    \begin{equation}\label{eq:orthogonality}
        \int_{-1}^1\Pl_k\mu^j\dd\mu=\int_{-1}^1\Pln_k\mu^j\dd\mu=0,\quad j<k.
    \end{equation}
    \Cref{lem:relationship_spaces} tells that $\Pl_k\in\spaceHn_{k+1}$, thus there exists a set of
    coefficients $c_j$ such that
    \begin{equation*}
        \Pl_k=\sum_{j=0}^{k+1}c_j\Pln_j.
    \end{equation*}
    Using \eqref{eq:orthogonality}, we can directly obtain $c_j=0$, $j=0,\dots,k-1$ and
    $c_k=\dfrac{\mK_{k,k}}{\mKn_{k,k}}$. Since both $\pl_k$ and $\pln_k$ are monic polynomials,
    \eqref{eq:Pl2Pln} indicates that $c_{k+1}=\alpha$. Hence \eqref{eq:recurrence_polynomial} holds.

    \Cref{eq:orthogonality} indicates
    \begin{equation*}
        \int_{-1}^1\pd{\Pl_k}{\alpha}\mu^j\dd\mu=0,\quad j<k.
    \end{equation*}
    Then using the same technique in the proof of \eqref{eq:recurrence_polynomial}, one can
    directly prove \eqref{eq:d_pl}.
\end{proof}
Calculations using \eqref{eq:recurrence_polynomial} and \eqref{eq:d_pl} yield
\begin{equation}\label{eq:mpn_time}
    \begin{aligned}
        \pd{\mP I}{t} &= \sum_{i=0}^N\left( \pd{f_i}{t}\Pl_i+f_i\pd{\Pl_i}{t} \right)
        = \sum_{i=0}^N\left( \pd{f_i}{t}(\alpha\Pln_{i+1}+\beta_i\Pln_i)
        +f_i\pd{\alpha}{t}(-4\Pln_{i+1}+\gamma_i\Pln_i) \right)\\
        &= \sum_{i=0}^N\left( \pd{f_i}{t}\beta_i+\pd{f_{i-1}}{t}\alpha+f_i\pd{\alpha}{t} \gamma_i 
        -4f_{i-1}\pd{\alpha}{t}\right)\Pln_i
        +\left(\pd{f_N}{t}\alpha-4f_N\pd{\alpha}{t}\right)\Pln_{N+1},
    \end{aligned}
\end{equation}
where $f_{-1}\equiv0$. Then we have
\begin{equation}\label{eq:mpn_time_project}
    \mPn\pd{\mP I}{t} = \sum_{i=0}^N \left( \pd{f_i}{t}\beta_i+\pd{f_{i-1}}{t}\alpha
    +f_i\pd{\alpha}{t}\gamma_i -4f_{i-1}\pd{\alpha}{t}\right)\Pln_i,
\end{equation}
and the time derivative part can be written as
\begin{equation}
    \mPn\pd{\mP I}{t} = (\boldsymbol{\Pln})^T\bDn\pd{\bw}{t},
\end{equation}
where $\boldsymbol{\Pln}=(\Pln_0,\dots,\Pln_N)^T$ and
\begin{equation} \label{eq:matrixDn}
  \bDn = \begin{pmatrix}
    \beta_0 & \gamma_0f_0                  & 0       & 0       & 0       & \cdots & 0           & 0      \\
    \alpha  & -4f_0                        & 0       & 0       & 0       & \cdots & 0           & 0      \\
    0       & \gamma_2f_2                  & \beta_2 & 0       & 0       & \cdots & 0           & 0      \\
    0       & \gamma_3f_3-4f_2             & \alpha  & \beta_3 & 0       & \cdots & 0           & 0      \\
    0       & \gamma_4f_4-4f_3             & 0       & \alpha  & \beta_4 & \cdots & 0           & 0      \\
    \vdots  & \vdots                       & \vdots  & \vdots  & \vdots  & \ddots & \vdots      & \vdots \\
    0       & \gamma_{N-1}f_{N-1}-4f_{N-2} & 0       & 0       & 0       & \cdots & \beta_{N-1} & 0      \\
    0       & \gamma_Nf_{N}-4f_{N-1}       & 0       & 0       & 0       & \cdots & \alpha      & \beta_N
  \end{pmatrix}
   \in \bbR^{(N+1)\times(N+1)}.
\end{equation}
Then we have
\begin{align*}
    \innern{\boldsymbol{\Pln}}{\mPn\pd{\mP I}{t}} &= 
    \innern{\boldsymbol{\Pln}}{(\boldsymbol{\Pln})^T \bDn\pd{\bw}{t}}=
    \innern{\boldsymbol{\Pln}}{(\boldsymbol{\Pln})^T} \bDn\pd{\bw}{t},\\
    \innern{\boldsymbol{\Pln}}{\mu\mPn\pd{\mP I}{z}} &=
    \innern{\mu\boldsymbol{\Pln}}{(\boldsymbol{\Pln})^T\bDn\pd{\bw}{z}}=
    \innern{\mu\boldsymbol{\Pln}}{(\boldsymbol{\Pln})^T}\bDn\pd{\bw}{z}.
\end{align*}
Let $\bLambdan=\mathrm{diag}(\mKn_{0,0},\dots,\mKn_{N,N})\in\bbR^{(N+1)\times(N+1)}$ and
$\bMn=\bLambdan^{-1} \innern{\mu\boldsymbol{\Pln}}{(\boldsymbol{\Pln})^T}$, then the regularized
reduecd model corresponding to \eqref{eq:ms_n_P} can be written as
\begin{equation}\label{eq:ms_n_quasi}
    \frac{1}{c}\bDn\pd{\bw}{t} + \bMn\bDn\pd{\bw}{z} = \bSn,
\end{equation}
where $\bSn=\left(\innern{\Pln_i}{\mS(\mP I)}/\mKn_{i,i}\right)_{i=0,\dots,N}$.
Similarly as \eqref{eq:relationship_dp}, $\bDn\pd{\cdot}{t}$ and $\bMn$ are the matrix
representation of the operators $\mPn\pd{\cdot}{t}$ and $\mPn\mu\cdot$ respectively in the space
$\spaceHn_N$ with the basis $\Pln_k$, i.e.,
\begin{equation}\label{eq:relationship_dpn}
    \bw\leftrightarrow \mP I,\quad \text{ in } \spaceH_N,\quad
    \bDn\pd{\cdot}{t}\leftrightarrow \mPn\pd{\cdot}{t},\quad
    \bMn \leftrightarrow \mPn\mu\cdot,
    \quad \text{in } \spaceHn_N \text{ with basis } \Pln_k.
\end{equation}

For the regularized reduced model \eqref{eq:ms_n_quasi}, we claim that it is not only globally
hyperbolic, but is also strictly hyperbolic and symmetric hyperbolic.
\begin{theorem}\label{thm:regularization}
    The regularized reduced model \eqref{eq:ms_n_quasi} is strictly symmetric hyperbolic for any
    $\bw$ with $\alpha\in(-1,1)$.
\end{theorem}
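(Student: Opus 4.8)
The plan is to bring the system into symmetric form by one left-multiplication, after which every assertion follows from two facts: $\bLambdan\bMn$ is symmetric and $\bDn$ is nonsingular. Multiplying $\frac{1}{c}\bDn\pd{\bw}{t}+\bMn\bDn\pd{\bw}{z}=\bSn$ on the left by $\bDn^T\bLambdan$ turns the time coefficient into $\frac{1}{c}\bDn^T\bLambdan\bDn$ and the flux coefficient into $\bDn^T(\bLambdan\bMn)\bDn$. Since $\bLambdan=\mathrm{diag}(\mKn_{0,0},\dots,\mKn_{N,N})$ is symmetric positive definite, the first is symmetric, and it is positive definite once $\bDn$ is invertible, because $x^T\bDn^T\bLambdan\bDn x=(\bDn x)^T\bLambdan(\bDn x)>0$ for $x\neq 0$. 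Moreover $\bLambdan\bMn=\innern{\mu\boldsymbol{\Pln}}{(\boldsymbol{\Pln})^T}$ has $(i,j)$ entry $\int_{-1}^1\mu\pln_i\pln_j\weightn\dd\mu$ and is therefore symmetric, so $\bDn^T(\bLambdan\bMn)\bDn$ is symmetric as well. This is exactly the symmetrizer coming from the self-adjointness of $\mPn\mu\cdot$ on $\spaceHn_N$ for $\innern{\cdot}{\cdot}$, and it reduces the whole theorem to showing that $\bDn$ is invertible and that the eigenvalues of $\bMn$ are real and simple.

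I expect the invertibility of $\bDn$ to be the main obstacle, and I would establish it by computing $\det\bDn$ from \eqref{eq:matrixDn}. The first column of $\bDn$ has only the two nonzero entries $\beta_0,\alpha$ (rows $0$ and $1$); expanding along it, both cofactor minors are lower triangular, with diagonals $(-4f_0,\beta_2,\dots,\beta_N)$ and $(\gamma_0 f_0,\beta_2,\dots,\beta_N)$ respectively. This gives $\det\bDn=-f_0\,(4\beta_0+\alpha\gamma_0)\prod_{i=2}^N\beta_i$. Each $\beta_i=\mK_{i,i}/\mKn_{i,i}>0$, and $f_0=E_0/\mK_{0,0}\neq0$ for admissible states ($E_0>0$), so only $4\beta_0+\alpha\gamma_0$ must be controlled. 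Here the concrete weight enters: from $\weight=(1+\alpha\mu)\weightn$ and $\pd{\weight}{\alpha}=-4\mu\weightn$ one gets $4\mK_{0,0}+\alpha\pd{\mK_{0,0}}{\alpha}=\int_{-1}^1(4\weight-4\alpha\mu\weightn)\dd\mu=4\int_{-1}^1\weightn\dd\mu>0$, so that $4\beta_0+\alpha\gamma_0=4$. Hence $\det\bDn=-4f_0\prod_{i=2}^N\beta_i\neq0$, and the symmetric-hyperbolicity argument is complete. The cancellation $4\weight-4\alpha\mu\weightn=4\weightn$ is the delicate point that makes everything close.

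It remains to pin down the characteristic speeds. Since $\bMn\bDn-\frac{\lambda}{c}\bDn=(\bMn-\frac{\lambda}{c}\mathrm{Id})\bDn$ and $\bDn$ is nonsingular, the characteristic speeds are exactly $c$ times the eigenvalues of $\bMn$. Just as in the $\bM$ case in \cref{sec:framework}, I would identify the characteristic polynomial of $\bMn$ with $\pln_{N+1}$: orthogonality forces $\bLambdan\bMn$ to be tridiagonal, and the monic three-term recurrence $\mu\pln_k=\pln_{k+1}+\tilde a_k\pln_k+\tilde b_k\pln_{k-1}$ shows that $\mPn\mu\cdot$ acts on $\mathrm{span}\{\Pln_0,\dots,\Pln_N\}$ as the truncated Jacobi matrix of the $\pln_k$ (the $\Pln_{N+1}$ contribution being projected away), whose characteristic polynomial is the next orthogonal polynomial $\pln_{N+1}$. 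The zeros of $\pln_{N+1}$ are real, simple, and contained in $(-1,1)$; thus $\bMn$ has distinct real eigenvalues, which yields strict hyperbolicity, and the characteristic speeds $c\lambda_k\in(-c,c)$ additionally satisfy \cref{c2}.
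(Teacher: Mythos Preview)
Your argument is correct and follows essentially the same route as the paper: left-multiply by $\bDn^{T}\bLambdan$ to obtain a symmetric-hyperbolic form, and identify the eigenvalues of $\bMn$ with the zeros of $\pln_{N+1}$ via the Jacobi-matrix/orthogonal-polynomial correspondence (the paper's \cref{lem:polynomial}\,\cref{itm:J}). The one place you go beyond the paper is your explicit computation $\det\bDn=-4f_0\prod_{i\ge 2}\beta_i$ using the identity $4\beta_0+\alpha\gamma_0=4$; the paper simply asserts that $\bDn^{T}\bLambdan\bDn$ is symmetric positive definite without verifying invertibility of $\bDn$, so your additional step actually closes a gap the published proof leaves implicit.
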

Before the proof of the \cref{thm:regularization}, we list some useful properties of the orthogonal
polynomials. Its proof can be found in textbook on the orthogonal polynomials, for example
\cite{szeg1939orthogonal,gautschi2004orthogonal}.
\begin{lemma}\label{lem:polynomial}
    Given an interval $[x_l,x_r]$ and a weight function $\omega$ such that $\omega(x)>0$ and
    $\omega\in L^1(x_l,x_r)$, let $\{p_n\}$ is a sequence of monic orthogonal polynomial with
    respect to the inner product $\langle g,h\rangle=\int_{x_l}^{x_r} \omega gh\dd x$, then we have
    \begin{enumerate}
        \item the orthogonal polynomials can be generated by the three term recurrence:
            \begin{equation*}
                p_{n+1}=(x-a_{n+1})p_n - b_{n+1}p_{n-1}, n\in\bbN,\quad p_{-1}=0, \quad p_0=1;
            \end{equation*}
        \item\label{itm:zeros} the polynomial $p_n$ has $n$ real and simple zeros, and they all lie
            in $[x_l,x_r]$;
        \item\label{itm:intersection} let $x_j$, $j=1,\dots,n+1$ be zeros of $p_{n+1}$, then there exists one and only one 
            zero of $p_n$ in $(x_j, x_{j+1})$, $j=1,\dots,n$;
        \item\label{itm:J} let the Jacobian matrix be $\bJ=(J_{i,j})_{i,j=0,\dots,N}$ with
          $J_{i,j}=\frac{\int_{x_l}^{x_r}\omega x p_ip_j\dd x}{\int_{x_l}^{x_r}\omega p_i^2\dd x}$, then the
            characteristic polynomial of $\bJ$ is $p_{n+1}$.
    \end{enumerate}
\end{lemma}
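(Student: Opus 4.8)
The plan is to prove the four classical facts in the order (1), (2), (4), (3), deriving the interlacing last because it is cleanest once the tridiagonal Jacobi structure is in hand; throughout I would use only the monicity of the $p_n$, their mutual orthogonality, and the positivity $\omega>0$ on $(x_l,x_r)$. For the three-term recurrence (1), I would start from the fact that $xp_n$ is monic of degree $n+1$, hence expands uniquely as $xp_n=p_{n+1}+\sum_{k=0}^n c_k p_k$ with $c_k=\langle xp_n,p_k\rangle/\langle p_k,p_k\rangle$. Shifting the factor $x$ onto the second argument gives $\langle xp_n,p_k\rangle=\langle p_n,xp_k\rangle$, which vanishes whenever $\deg(xp_k)=k+1<n$; thus only $k=n$ and $k=n-1$ survive, and rearranging produces $p_{n+1}=(x-a_{n+1})p_n-b_{n+1}p_{n-1}$ with $b_{n+1}=\langle p_n,p_n\rangle/\langle p_{n-1},p_{n-1}\rangle>0$. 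The strict positivity of $b_{n+1}$, a consequence of $\omega>0$, is the ingredient that will later force \emph{strict} interlacing.

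For the zeros (2), I would use the standard sign-change argument: let $x_1,\dots,x_m$ be the interior points at which $p_n$ changes sign and suppose $m<n$. Then $q(x)=\prod_{j=1}^m(x-x_j)$ has degree $m<n$, so orthogonality gives $\langle p_n,q\rangle=0$; but $p_nq$ keeps a constant sign on $(x_l,x_r)$ while $\omega>0$, so $\langle p_n,q\rangle\neq0$, a contradiction. Hence $p_n$ has at least $n$ interior sign changes, and since $\deg p_n=n$ these account for exactly $n$ simple real zeros, all lying in $(x_l,x_r)$.

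For the Jacobi matrix (4), orthogonality together with (1) shows that $\int_{x_l}^{x_r}\omega x p_ip_j\dd x=\langle p_i,xp_j\rangle$ vanishes unless $|i-j|\le1$, so $\bJ$ is tridiagonal with $J_{i,i}=a_{i+1}$, $J_{i,i-1}=1$, and $J_{i,i+1}=\langle p_{i+1},p_{i+1}\rangle/\langle p_i,p_i\rangle$, whose off-diagonal products equal the coefficients $b_{i+1}$. Expanding $D_k(\lambda)=\det(\lambda I-\bJ_k)$ along the last row of the leading $k\times k$ block yields $D_k=(\lambda-a_k)D_{k-1}-b_kD_{k-2}$, which is precisely the recurrence of (1) with matching initial data $D_0=p_0=1$ and $D_1=p_1$; an induction on $k$ then identifies the characteristic polynomial of the full $(N+1)\times(N+1)$ matrix as $p_{N+1}$.

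Finally, for interlacing (3), I would symmetrize $\bJ$ by the diagonal scaling $\bLambda^{1/2}\bJ\bLambda^{-1/2}$ with $\bLambda=\mathrm{diag}(\langle p_0,p_0\rangle,\dots,\langle p_N,p_N\rangle)$; since the numerator matrix $\left(\int\omega x p_ip_j\dd x\right)$ is symmetric, the result is a real symmetric tridiagonal matrix whose off-diagonal entries are $\sqrt{b_{i+1}}\neq0$, i.e.\ an \emph{unreduced} Jacobi matrix. By (4) its eigenvalues are the zeros of $p_{n+1}$, while those of its leading $n\times n$ principal submatrix are the zeros of $p_n$, so the Cauchy interlacing theorem applies directly. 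The main obstacle is upgrading weak interlacing to the strict statement (\emph{one and only one} zero of $p_n$ strictly between consecutive zeros of $p_{n+1}$): this is exactly where the strict positivity $b_{n+1}>0$ established in (1) enters, since the unreduced structure it guarantees forbids $\bJ_n$ and $\bJ_{n+1}$ from sharing any eigenvalue and forbids repeated eigenvalues, which is what makes every inclusion strict.
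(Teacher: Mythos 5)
Your proposal is correct, but note that the paper does not prove this lemma at all: it is quoted as a collection of classical facts with the proof deferred to the standard references on orthogonal polynomials (Szeg\H{o}; Gautschi). Your write-up essentially reconstructs the textbook arguments those references contain: expanding the monic polynomial $xp_n$ in the orthogonal basis and using $\langle xp_n,p_k\rangle=\langle p_n,xp_k\rangle$ to kill all but two terms, with $b_{n+1}=\langle p_n,p_n\rangle/\langle p_{n-1},p_{n-1}\rangle>0$; the sign-change argument for part (2); the last-row cofactor expansion showing the principal minors $D_k(\lambda)$ of $\lambda I-\bJ$ satisfy the same three-term recurrence with the same initial data, so $D_{N+1}=p_{N+1}$; and symmetrization plus Cauchy interlacing for part (3). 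All four arguments are sound, and your treatment of strictness in (3) is the right one: the unreduced tridiagonal structure (equivalently $b_{n+1}>0$) rules out shared eigenvalues between $\bJ_n$ and $\bJ_{n+1}$ --- if $p_n(\lambda)=p_{n+1}(\lambda)=0$ then the recurrence forces $p_{n-1}(\lambda)=0$ and inductively $p_0(\lambda)=0$, a contradiction --- which upgrades weak Cauchy interlacing to the ``one and only one'' statement. Two cosmetic remarks: your off-diagonal labels are shifted by one (with your normalization $J_{i,i+1}=\langle p_{i+1},p_{i+1}\rangle/\langle p_i,p_i\rangle=b_{i+2}$, and the symmetrized entry is $\sqrt{b_{i+2}}$, not $\sqrt{b_{i+1}}$), which does not affect the determinant recursion since the product $J_{k-2,k-1}J_{k-1,k-2}=b_k$ is what enters; and you prove the slightly stronger claim that the zeros lie in the open interval $(x_l,x_r)$, which of course implies the closed-interval statement in the lemma.
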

\begin{proof}[Proof of \cref{thm:regularization}]
    Since $\bLambdan$ is symmetric positive definite and
    $\bLambdan\bMn=\innern{\mu\boldsymbol{\Pln}}{(\boldsymbol{\Pln})^T}$ is symmetric, we multiply
    \eqref{eq:ms_n_quasi} by $\bDn^T\bLambdan$ and obtain
    \begin{equation*}
        \frac{1}{c} \bDn^T\bLambdan\bDn\pd{\bw}{t}
        + \bDn^T\bLambdan\bMn\bDn\pd{\bw}{z}=\bDn^T\bLambdan\bSn,
    \end{equation*}
    where $\bDn^T\bLambdan\bDn$ is symmetric positive definite and $\bDn^T\bLambdan\bMn\bDn$ is
    symmetric. Thus the system \eqref{eq:ms_n_quasi} is symmetric hyperbolic.

    \Cref{lem:polynomial} \cref{itm:J} indicates that the characteristic polynomial of $\bM$ is
    $\pln_{N+1}$, whose zeros are all real and simple due to \cref{lem:polynomial} \cref{itm:zeros},
    thus the system \eqref{eq:ms_n_quasi} is strictly hyperbolic. This completes the proof.
\end{proof}

In the proof of \cref{thm:regularization}, we show that the characteristic polynomial of $\bM$ is
$\pln_{N+1}$. Using \cref{lem:polynomial} \cref{itm:zeros}, one can directly obtain the following
corollary.
\begin{corollary}\label{cor:speed}
    For any $1\leq N\in\bbN$, all the characteristic speeds are not faster than the speed of light.
\end{corollary}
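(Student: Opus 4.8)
The plan is to obtain \cref{cor:speed} as an immediate consequence of the spectral information already extracted in the proof of \cref{thm:regularization}. First I would recast the regularized system \eqref{eq:ms_n_quasi} into the standard quasi-linear form $\pd{\bw}{t}+\bA\pd{\bw}{z}=c\,\bDn^{-1}\bSn$ by multiplying on the left by $c\,\bDn^{-1}$, so that the flux matrix is $\bA=c\,\bDn^{-1}\bMn\bDn$. This step requires $\bDn$ to be invertible, which is already available from the proof of \cref{thm:regularization}: there $\bDn^T\bLambdan\bDn$ is shown to be symmetric positive definite, and since $\bLambdan$ is positive definite this forces $\bDn$ to have full rank. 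By definition the characteristic speeds of \eqref{eq:ms_n_quasi} are the eigenvalues of $\bA$.

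Next I would exploit the similarity $\bA=c\,\bDn^{-1}\bMn\bDn$, so that $\bA$ and $c\,\bMn$ share the same spectrum; hence the eigenvalues of $\bA$ are precisely $c$ times those of $\bMn$. The decisive input is the identification, carried out inside the proof of \cref{thm:regularization}, of the characteristic polynomial of $\bMn=\bLambdan^{-1}\innern{\mu\boldsymbol{\Pln}}{(\boldsymbol{\Pln})^T}$ with the monic orthogonal polynomial $\pln_{N+1}$; this matching is exactly \cref{lem:polynomial} \cref{itm:J} applied to the Jacobi matrix $\bJ$ of the $\pln_k$ on the interval $[-1,1]$.

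Finally I would invoke \cref{lem:polynomial} \cref{itm:zeros} on the same interval $[-1,1]$: every zero of $\pln_{N+1}$ is real and lies in $[-1,1]$. Thus each eigenvalue $\lambda$ of $\bMn$ satisfies $\lambda\in[-1,1]$, so every characteristic speed $c\lambda$ of the regularized model lies in $[-c,c]$, which is precisely the statement that no characteristic speed exceeds the speed of light and settles \cref{c2}.

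Since both the computation of the characteristic polynomial and the localization of its roots are already complete, there is essentially no obstacle left; the only point I would take care to state explicitly is the spectral bookkeeping of the reduction. Concretely, the characteristic speeds $s$ solve $\det(\bMn\bDn-(s/c)\bDn)=0$, which after factoring out the invertible $\bDn$ collapses to $\det(\bMn-(s/c)I)=0$, so that $s/c$ ranges exactly over the spectrum of $\bMn$. Verifying the invertibility of $\bDn$ that legitimizes this factorization is the single place where care is genuinely warranted rather than routine.
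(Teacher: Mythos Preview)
Your proposal is correct and follows essentially the same route as the paper: the paper simply notes that the characteristic polynomial of $\bMn$ was identified with $\pln_{N+1}$ in the proof of \cref{thm:regularization} and then invokes \cref{lem:polynomial}~\cref{itm:zeros} to confine the roots to $[-1,1]$. The explicit similarity reduction and the justification of the invertibility of $\bDn$ that you spell out are details the paper leaves implicit, but the underlying argument is identical.
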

\Cref{thm:regularization} and \cref{cor:speed} prove that the
regularized reduced model
\eqref{eq:ms_n_quasi} fulfils the \cref{c1,c2}.
Now we check the \cref{c3,c4}.
\Cref{eq:mpn_time,eq:mpn_time_project} show that 
\begin{equation}\label{eq:difference_mpn}
    \pd{\mP I}{z} - \mPn\pd{\mP I}{z} = 
        \left(\pd{f_N}{z}\alpha-4f_N\pd{\alpha}{z}\right)\Pln_{N+1}.
\end{equation}
Let 
\begin{equation}
  \label{eq:mR}
    \mR_k=\innern{\weightn\mu^{k+1}}{ \left(\pd{f_N}{z}\alpha-4f_N\pd{\alpha}{z}\right)\Pln_{N+1}}
    =\begin{cases}
        0,  &   k<N,\\
        \mKn_{N+1,N+1}\left(\pd{f_N}{z}\alpha-4f_N\pd{\alpha}{z}\right), & k=N.
    \end{cases}
\end{equation}
Since the convection part of the \MPN model is
\begin{equation*}
    \frac{1}{c}\innern{\weightn\mu^k}{\pd{\mP I}{t}}
    + \innern{\weightn\mu^k}{\mu\pd{\mP I}{z}}
    = \frac{1}{c}\pd{E_k}{t} + \pd{E_{k+1}}{z},\quad k=0,\dots,N,
\end{equation*}
the convection part of the regularized reduced model is
\begin{equation*}
    \frac{1}{c}\innern{\weightn\mu^k}{\pd{\mP I}{t}}
    + \innern{\weightn\mu^k}{\mu\mPn\pd{\mP I}{z}}
    = \frac{1}{c}\pd{E_k}{t} + \pd{E_{k+1}}{z}
    -\mR_k.
\end{equation*}
Thus the regularized reduced model is
\begin{equation}\label{eq:regularized_ms}
    \frac{1}{c}\pd{E_k}{t} + \pd{E_{k+1}}{z} -\mR_k = C_k,
\end{equation}
where $C_k=\int_{-1}^1\mu^k\mS(\mP I)\dd\mu$.
Since $\mR_k=0$, $k=0,\dots,N-1$, the only difference of \eqref{eq:regularized_ms} from
\eqref{eq:momentsystem} is the last equation, i.e., the \cref{c4} holds.

Particularly, when $N=1$, \eqref{eq:f1} shows that $f_1=0$. Thus the regularized term $\mR_k=0$,
$k=0,1$, i.e., the regularization vanishes, which indicates the \cref{c3} holds.

\begin{remark}
    The hyperbolic model reduction framework in \cite{framework,Fan2015} suggests that adding
    a more projection between the operators $\mu\cdot$ and $\pd{\cdot}{z}$ is able to regularize the
    reduced model to be hyperbolic. However, in that framework, all the procedures are done in the same
    space, which limits the freedom on the resulting system. The regularization in this subsection
    studies the moments in two spaces, where one is for the ansatz and the other one is for the
    operator. As discussed in \cref{rk:Galerkin}, in the viewpoint of the Galerkin method, the trial
    and test function spaces are different.
    Hence, the key point of the regularization proposed in this subsection is the specific choice of
    the space $\spaceHn_N$. Notice that
    \begin{displaymath}
        \pd{\mP I}{z} = \sum_{i=0}^N\left( \pd{f_i}{z}\Pl_i+f_i\pd{\Pl_i}{z} \right).
    \end{displaymath}
    It would be a good choice to select a subspace of 
    \begin{displaymath}
        \mathrm{span}\left\{ \Pl_i, \pd{\Pl_i}{\alpha}, i=0,\dots,N \right\}.
    \end{displaymath}
    For the \MPN model, \cref{lem:relationship_spaces} shows that the upper space is
    $\spaceHn_{N+1}$, which is the motivation of the novel regularization.
\end{remark}
Till now, we proposed a novel hyperbolic regularization for the \MPN model, and the resulting model
\eqref{eq:regularized_ms} satisfies all the \cref{c1,c2,c3,c4}. Next, we investigate the
characteristic structure of the regularized reduced model.
\subsubsection{Characteristic structure}
Denote the eigenvalues of $\bMn$ by $\cs{N}{k}$, $k=0,\dots,N$, which are zeros of $\pln_{N+1}$ and
$\cs{N}{0}<\cs{N}{1}<\dots<\cs{N}{N}$. We have the following properties for the characteristic speed
$\cs{N}{k}$, $k=0,\dots,N$.
\begin{property}\label{pro:speed}
    The characteristic speeds $\cs{N}{k}=\cs{N}{k}(\alpha)$, $k=0,\dots,N$ satisfy the following
    properties:
    \begin{enumerate}
        \item $\cs{N}{k}$, $k=0,\dots,N$ is strictly decreasing with respect to $\alpha$, i.e.,
            $\pd{\cs{N}{k}(\alpha)}{\alpha} < 0$, $\alpha\in(-1,1)$;
        \item\label{itm:intersection_lam} $\cs{N}{k} < \cs{N-1}{k} < \cs{N}{k+1}$, $k=0,\dots,N-1$;
        \item $\cs{N}{0} < \frac{E_1}{E_0} < \cs{N}{N}$, for any $N\geq1$.
    \end{enumerate}
\end{property}
To prove the \cref{pro:speed}, we list the follow lemma, whose proof can be found in
\cite[Section 3]{ismail1989monotonicity}.
\begin{lemma} \label{lem:zerosofpolynomial}
    Let $\{p^{[\alpha]}_n(x)\}$ be orthogonal polynomials with respect to weight function
    $\omega^{[\alpha]}(x)$ on the interval $[x_l, x_r]$ and assume $\weight(x)$ is positive and has
    a continuous first derivative with respect to $\alpha$ for $x\in [x_l,x_r]$ with
    $\alpha\in(\alpha_l,\alpha_r)$. Furthermore assume that 
    \begin{equation*}
        \int_{x_l}^{x_r} x^j \pd{\omega^{[\alpha]}(x)}{\alpha}\dd x, 
        \quad j=1,2,\dots,2n-1,
    \end{equation*}
    converge uniformly for $\alpha$ in every compact subinterval of
    $(\alpha_l, \alpha_r)$.
    Then the zeros of $p^{[\alpha]}_n$ are strictly increasing (decreasing) functions of $\alpha\in
    (\alpha_l,\alpha_r)$, if $\pd{\ln(\omega^{[\alpha]})}{\alpha}$ is a strictly increasing
    (decreasing) function of $x\in[x_l,x_r]$.
\end{lemma}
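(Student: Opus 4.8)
The statement is the classical Markov monotonicity theorem for zeros of orthogonal polynomials, and the plan is to differentiate a single zero with respect to $\alpha$ and read off the sign of its derivative from the monotonicity of $\psi := \pd{\ln(\weight)}{\alpha}$. Throughout I would write $\langle f,g\rangle = \int_{x_l}^{x_r} \weight f g\dd x$ and $h_j = \langle p^{[\alpha]}_j, p^{[\alpha]}_j\rangle$. Since the zeros are invariant under renormalization, I may assume the $p^{[\alpha]}_n$ are monic, so that the leading coefficient equals $1$ independently of $\alpha$ and $\partial_\alpha p^{[\alpha]}_n$ is a polynomial of degree at most $n-1$. Fixing a zero $x_k = x_k(\alpha)$, by \cref{lem:polynomial} it is simple and lies in $(x_l, x_r)$, so $\partial_x p^{[\alpha]}_n(x_k) \neq 0$, and implicit differentiation of $p^{[\alpha]}_n(x_k(\alpha)) = 0$ gives
\begin{equation*}
    \od{x_k}{\alpha} = -\frac{\partial_\alpha p^{[\alpha]}_n(x_k)}{\partial_x p^{[\alpha]}_n(x_k)}.
\end{equation*}
The task reduces to evaluating $\partial_\alpha p^{[\alpha]}_n$ at $x_k$.

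Next I would expand $\partial_\alpha p^{[\alpha]}_n = \sum_{j=0}^{n-1} c_j p^{[\alpha]}_j$ and compute its coefficients by differentiating the orthogonality relations $\langle p^{[\alpha]}_n, p^{[\alpha]}_j\rangle = 0$, $j<n$, in $\alpha$. Because $\partial_\alpha p^{[\alpha]}_j$ has degree below $n$ it is orthogonal to $p^{[\alpha]}_n$, so only two terms survive and give $c_j h_j = -\int_{x_l}^{x_r} p^{[\alpha]}_n p^{[\alpha]}_j\,\partial_\alpha\weight\dd x$. Substituting these coefficients back and evaluating at $x_k$ assembles the Christoffel--Darboux kernel $K_{n-1}(x_k,y) = \sum_{j=0}^{n-1} p^{[\alpha]}_j(x_k)p^{[\alpha]}_j(y)/h_j$, so that $\partial_\alpha p^{[\alpha]}_n(x_k) = -\int p^{[\alpha]}_n(y) K_{n-1}(x_k,y)\,\partial_\alpha\weight\dd y$. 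Using $p^{[\alpha]}_n(x_k)=0$ in the Christoffel--Darboux formula collapses the kernel to $K_{n-1}(x_k,y) = p^{[\alpha]}_{n-1}(x_k)\,p^{[\alpha]}_n(y)/\big(h_{n-1}(y - x_k)\big)$, and writing $\partial_\alpha\weight = \weight\,\psi$ yields
\begin{equation*}
    \od{x_k}{\alpha} = \frac{p^{[\alpha]}_{n-1}(x_k)}{h_{n-1}\,\partial_x p^{[\alpha]}_n(x_k)} \int_{x_l}^{x_r} \frac{\big(p^{[\alpha]}_n(y)\big)^2}{y - x_k}\,\psi(y)\,\weight(y)\dd y.
\end{equation*}
The prefactor is positive by the diagonal identity $p^{[\alpha]}_{n-1}(x_k)\,\partial_x p^{[\alpha]}_n(x_k)/h_{n-1} = K_{n-1}(x_k,x_k) = \sum_j \big(p^{[\alpha]}_j(x_k)\big)^2/h_j > 0$.

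It then remains to sign the integral. Writing $p^{[\alpha]}_n(y) = (y - x_k)\,r(y)$ with $\deg r = n-1$ turns the integrand into $(y - x_k)\,r(y)^2\,\psi(y)\,\weight(y)$, and the crucial cancellation is that $\int (y-x_k)r(y)^2\weight\dd y = \langle p^{[\alpha]}_n, r\rangle = 0$ since $r$ has degree below $n$. Hence I may subtract the constant $\psi(x_k)$ at no cost and rewrite the integral as $\int (y - x_k)\big(\psi(y) - \psi(x_k)\big) r(y)^2\weight(y)\dd y$. When $\psi$ is strictly increasing the factor $(y - x_k)(\psi(y) - \psi(x_k))$ is nonnegative and strictly positive off $y = x_k$, so integrated against the nonnegative weight $r(y)^2\weight(y)$ the integral is strictly positive, giving $\od{x_k}{\alpha} > 0$; the decreasing case is identical with the sign reversed.

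The main obstacle is analytic rather than algebraic: justifying the differentiation of $p^{[\alpha]}_n$ and of the orthogonality relations under the integral sign. This is precisely where the hypothesis that $\int x^j\,\partial_\alpha\weight\dd x$ converge uniformly on every compact $\alpha$-subinterval for $j\le 2n-1$ is used — it legitimizes differentiating the moments under the integral, and since the Hankel (Gram) determinants defining $p^{[\alpha]}_n$ and $h_{n-1}$ involve moments only up to order $2n-1$, it makes $\alpha\mapsto p^{[\alpha]}_n$ and its coefficients continuously differentiable, which is what every step above silently requires. I would establish this regularity first, after which the remaining manipulations are the purely algebraic identities sketched above.
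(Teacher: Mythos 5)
Your proof is correct, but it cannot be compared line-by-line against the paper, because the paper does not prove this lemma at all: it is quoted as a known result (Markov's classical monotonicity theorem, essentially Theorem 6.12.1 in Szeg\H{o}'s book) with the proof deferred to \cite[Section 3]{ismail1989monotonicity}. What you have supplied is a complete, self-contained version of the classical Markov argument, and every step checks out: the reduction to monic polynomials so that $\partial_\alpha p^{[\alpha]}_n$ has degree at most $n-1$; implicit differentiation at a simple zero; differentiating $\langle p^{[\alpha]}_n,p^{[\alpha]}_j\rangle=0$ to get $c_jh_j=-\int p^{[\alpha]}_np^{[\alpha]}_j\,\partial_\alpha\weight\dd x$ (the term with $\partial_\alpha p^{[\alpha]}_j$ correctly drops by degree count); the Christoffel--Darboux collapse at $p^{[\alpha]}_n(x_k)=0$; positivity of the prefactor via the confluent kernel identity $K_{n-1}(x_k,x_k)=p^{[\alpha]}_{n-1}(x_k)\,\partial_xp^{[\alpha]}_n(x_k)/h_{n-1}>0$; and the decisive trick of subtracting $\psi(x_k)$, legitimate because $\int(y-x_k)r(y)^2\weight\dd y=\langle p^{[\alpha]}_n,r\rangle=0$, after which $(y-x_k)(\psi(y)-\psi(x_k))r(y)^2\weight(y)\geq 0$ with strict positivity off a finite set gives the strict sign. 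Your closing paragraph also correctly identifies where the uniform-convergence hypothesis enters: it permits differentiating the moments under the integral, and since the coefficients of $p^{[\alpha]}_n$ and the norm $h_{n-1}$ are rational in the moments up to order $2n-1$ with nonvanishing Hankel denominators, $\alpha\mapsto p^{[\alpha]}_n$ is $C^1$, which underwrites both the implicit function theorem and the differentiated orthogonality relations. The references cited by the paper often run such monotonicity results through the Hellmann--Feynman theorem for the Jacobi (tridiagonal) matrix, which buys easier generalization to recurrence-coefficient data; your Christoffel--Darboux route is the more elementary and fully matches the hypotheses as stated.

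One small remark, which is a flaw in the paper's transcription of the lemma rather than in your argument: the displayed hypothesis lists $j=1,2,\dots,2n-1$, but your regularity step (and Szeg\H{o}'s original statement) also needs the $j=0$ integral $\int_{x_l}^{x_r}\pd{\weight(x)}{\alpha}\dd x$ to converge uniformly, since $\mu_0$ enters the Hankel determinants and $h_{n-1}$. You silently use it when you say the moments up to order $2n-1$ are continuously differentiable; it would be worth flagging that the hypothesis should start at $j=0$.
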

\begin{proof}[Proof of \cref{pro:speed}]
    We prove the conclusion one by one.
    \begin{enumerate}
        \item Notice that
            \[
            \pd{\ln(\weightn(\mu))}{\alpha} = -5\pd{\ln(1+\alpha\mu)}{\alpha} 
            =\frac{-5\mu}{1+\alpha\mu}
            \]
            is a decreasing function of $\mu\in (-1,1)$ for any $\alpha\in(-1,1)$. 
            For any $\alpha\in[\alpha_l,\alpha_r]\subset(-1,1)$, the weight function $\weightn$ is 
            bounded, so
            \[
            \int_{\mu_l}^{\mu_r} \mu^j \pd{\weightn(\mu)}{\alpha}\dd \mu, 
            \quad j=1,2,\dots,2n-1
            \]
            converge uniformly.
            According to \cref{lem:zerosofpolynomial}, we have that $\pd{\cs{N}{k}}{\alpha}<0$
            for any $\alpha\in(-1,1)$ and $k=0,\dots,N$.
        \item
            It is a direct corollary of \cref{lem:polynomial} \cref{itm:intersection}.
        \item Direct calculation yields $\pln_1(\mu) = \mu-\frac{E_1}{E_0}$, thus $\lambda_{1,0} =
            \frac{E_1}{E_0}$. Using the \cref{lem:polynomial} \cref{itm:intersection_lam}, we complete the proof.
    \end{enumerate}
\end{proof}

Riemann problem is of fundamental importance for the hyperbolic reduced model.
The solution structure of the Riemann problem is instructional for studying the approximate Riemann
solver, which is the basis of the numerical methods using Godunov type schemes.
We study the characteristic structure of the regularized reduced model \eqref{eq:ms_n_quasi} and 
have the following conclusion.
\begin{theorem} \label{thm:characteristicstructure}
    The characteristic fields corresponding to $\cs{N}{0}$ and $\cs{N}{N}$ are genuinely nonlinear.
\end{theorem}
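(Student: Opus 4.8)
The plan is to verify the defining inequality for genuine nonlinearity, $\nabla_\bw(c\cs{N}{k})\cdot\br_k\neq0$ at every admissible state, for $k=0$ and $k=N$. First I would recast \eqref{eq:ms_n_quasi} as $\pd{\bw}{t}+c\bDn^{-1}\bMn\bDn\pd{\bw}{z}=c\bDn^{-1}\bSn$; this is legitimate because $\bDn$ is invertible (the proof of \cref{thm:regularization} exhibits $\bDn^T\bLambdan\bDn$ as symmetric positive definite). The $k$-th characteristic speed is then $c\cs{N}{k}$ and a right eigenvector is $\br_k=\bDn^{-1}\boldsymbol{v}_k$, where $\bMn\boldsymbol{v}_k=\cs{N}{k}\boldsymbol{v}_k$. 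The key structural fact is that $\cs{N}{k}$, being a zero of $\pln_{N+1}$, depends on $\bw=(f_0,\alpha,f_2,\dots,f_N)^T$ only through $\alpha$; hence $\nabla_\bw\cs{N}{k}$ has a single nonzero entry, $\pd{\cs{N}{k}}{\alpha}$, in the $\alpha$-slot, and the first part of \cref{pro:speed} gives $\pd{\cs{N}{k}}{\alpha}<0$. Thus the whole question reduces to showing that the $\alpha$-component of $\br_k$ does not vanish for $k=0,N$.

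Next I would read off that $\alpha$-component from the sparse top-left corner of $\bDn$ in \eqref{eq:matrixDn}. Writing $\br_k=(r_0,r_\alpha,r_2,\dots,r_N)^T$, only columns $0$ and $1$ occur in the first two rows, so the first two equations of $\bDn\br_k=\boldsymbol{v}_k$ are $\beta_0 r_0+\gamma_0 f_0 r_\alpha=v_0$ and $\alpha r_0-4f_0 r_\alpha=v_1$. Setting $r_\alpha=0$, solving the remaining rows $2,\dots,N$ (always possible since each $\beta_i>0$), and using uniqueness of $\br_k=\bDn^{-1}\boldsymbol{v}_k$, one sees that $r_\alpha=0$ holds if and only if $v_1=\alpha v_0/\beta_0$. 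The ratio $v_1/v_0$ is fixed by the zeroth row of $\bMn\boldsymbol{v}_k=\cs{N}{k}\boldsymbol{v}_k$, which reads $\tilde a_1 v_0+(\mKn_{1,1}/\mKn_{0,0})v_1=\cs{N}{k}v_0$ with $\tilde a_1$ the first recurrence coefficient of $\{\pln_j\}$; thus $v_1/v_0=(\cs{N}{k}-\tilde a_1)\mKn_{0,0}/\mKn_{1,1}$. Combining this with $\beta_0=\mK_{0,0}/\mKn_{0,0}$ collapses the degeneracy condition into the single scalar equation $\cs{N}{k}=\tilde a_1+\alpha\mKn_{1,1}/\mK_{0,0}$.

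The decisive step is to recognize the right-hand side as the mean velocity $E_1/E_0$. Using $\weight=(1+\alpha\mu)\weightn$ (from the proof of \cref{lem:relationship_spaces}) I would compute $\mK_{0,0}=\mKn_{0,0}(1+\alpha\tilde a_1)$ and $\int_{-1}^1\mu\weight\dd\mu=\tilde a_1\mKn_{0,0}(1+\alpha\tilde a_1)+\alpha\mKn_{1,1}$, whence $\tilde a_1+\alpha\mKn_{1,1}/\mK_{0,0}=\int_{-1}^1\mu\weight\dd\mu\big/\int_{-1}^1\weight\dd\mu$; since $f_1=0$ by \eqref{eq:f1}, this last ratio equals $E_1/E_0$. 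Consequently $r_\alpha=0$ precisely when $\cs{N}{k}=E_1/E_0$. The third part of \cref{pro:speed} states $\cs{N}{0}<E_1/E_0<\cs{N}{N}$, so for the extreme fields $k=0$ and $k=N$ the equality $\cs{N}{k}=E_1/E_0$ is impossible; hence $r_\alpha\neq0$ and $\nabla_\bw(c\cs{N}{k})\cdot\br_k=c\,\pd{\cs{N}{k}}{\alpha}\,r_\alpha\neq0$, giving genuine nonlinearity. The same identity shows that an interior field degenerates exactly where $\cs{N}{k}=E_1/E_0$, which explains why only the two extreme fields are claimed.

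I expect the third paragraph to be the main obstacle: reducing the degeneracy locus to $\cs{N}{k}=E_1/E_0$ is the one place where the special algebra of the weights $\weight,\weightn$ and the normalization $f_1=0$ must be exploited, and it is precisely what lets the interlacing estimate in \cref{pro:speed} close the argument. The remaining steps are either immediate from \cref{pro:speed} or routine manipulations of the sparse first rows of $\bDn$ and $\bMn$.
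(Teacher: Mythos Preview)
Your proposal is correct and follows essentially the same route as the paper: reduce genuine nonlinearity to the nonvanishing of the $\alpha$-component of $\bDn^{-1}\boldsymbol{v}_k$, invert the top-left $2\times2$ block of $\bDn$ against the first two entries of the Jacobi-matrix eigenvector, and identify the degeneracy locus with $\cs{N}{k}=E_1/E_0$ so that \cref{pro:speed} finishes the extreme fields. The paper carries out the same computation directly (obtaining $R_{k,1}=\frac{\beta_0}{\det(\bD_{11})}\bigl(\cs{N}{k}-E_1/E_0\bigr)$) and uses the identity $E_1/E_0=\tilde a_1+\alpha\mKn_{1,1}/\mK_{0,0}$ implicitly in that last step, whereas you spell it out; otherwise the arguments coincide.
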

\begin{proof}
    Denote the eigenvectors of $(\bDn)^{-1}\bMn\bDn$ corresponding to the eigenvalue $\cs{N}{k}$ by
    $\bR_k=(R_{k,0}, \dots,R_{k,N})^T$ and let
    \[
        \Delta^{(N)}_k := \nabla_{\bw} \cs{N}{k} \cdot \bR_k, \quad
        k=0,\dots,N.
    \]
    We only check whether $\Delta^{(N)}_k$ with $k=0$ and $N$ change their sign.
    Since the eigenvalues only depend on $\alpha$, we have
    \[
        \nabla_{\bw} \cs{N}{k} = \left(0,\pd{\cs{N}{k}}{\alpha},0,0,\dots,0\right)^T.
    \]
    The \cref{pro:speed} shows that $\pd{\cs{N}{k}}{\alpha}<0$ for any $\alpha\in(-1,1)$, 
    so we only need to check whether $R_{k,1}$ with $k=0,N$ change their sign.

    Since $\bMn$ is the Jacobian matrix of the orthogonal polynomial $\pln_k$, we denote the
    eigenvectors of $\bMn$ with respect to $\cs{N}{k}$ by $\br_k=(r_{k,0},\dots,r_{k,N})^T$, then 
    direct calculation yields
    \begin{equation*}
        r_{k,0} = \frac{\mKn_{1,1}}{\mKn_{0,0}},\quad r_{k,1}=\cs{N}{k}-\frac{\mKn_{1,0}}{\mKn_{0,0}}.
    \end{equation*}
    Notice that $\bR_k=(\bDn)^{-1}\br_k$ and the matrix $\bDn$ \eqref{eq:matrixDn} is a block lower
    triangle matrix, whose top-left block is 
    $\begin{pmatrix}
        \beta_0 &   \gamma_0f_0\\
        \alpha  &   -4f_0
    \end{pmatrix}$.
    Thus we can obtain after some calculations
    \begin{equation*}
        R_{k,1} = \frac{1}{-4\beta_0f_0-\alpha \gamma_0f_0}(-\alpha r_{k,0}+\beta_0r_{k,1})
        =\frac{\beta_0}{\text{det}(\bD_{11})} \left(\lambda^{(N)}_k-\frac{E_1}{E_0}\right).
    \end{equation*}
    \Cref{pro:speed} shows that $R_{k,1}$ with $k=0,N$ do not change their sign. This completes the
    proof.
\end{proof}

\begin{remark}
In \cref{thm:characteristicstructure}, we only study the characteristic fields corresponding to
$\cs{N}{0}$ and $\cs{N}{N}$. For other fields, we conjecture that each of other characteristic
fields is \emph{neither genuinely nonlinear nor linearly degenerate}. In the proof of
\cref{thm:characteristicstructure}, we have shown that we only need to check the sign of
$\cs{N}{k}-\frac{E_1}{E_0}$. \cref{fig:lambdaN} presents the profile of $\cs{N}{k}$ with $N=3,4$ and 
$\frac{E_1}{E_0}$. One can observe that all the eigenvalues expect $\cs{N}{0}$ and $\cs{N}{N}$ have 
an intersection point with $\frac{E_1}{E_0}$, so these characteristic fields are neither 
genuinely nonlinear nor linearly degenerate. 
But a rigorous proof is not easy.  We numerically verify it for $N$ ranging from $2$ to $200$ with
the help of high performance computing and obtain a positive result.
\begin{figure}[ht]
    \begin{overpic}[width=0.45\textwidth]{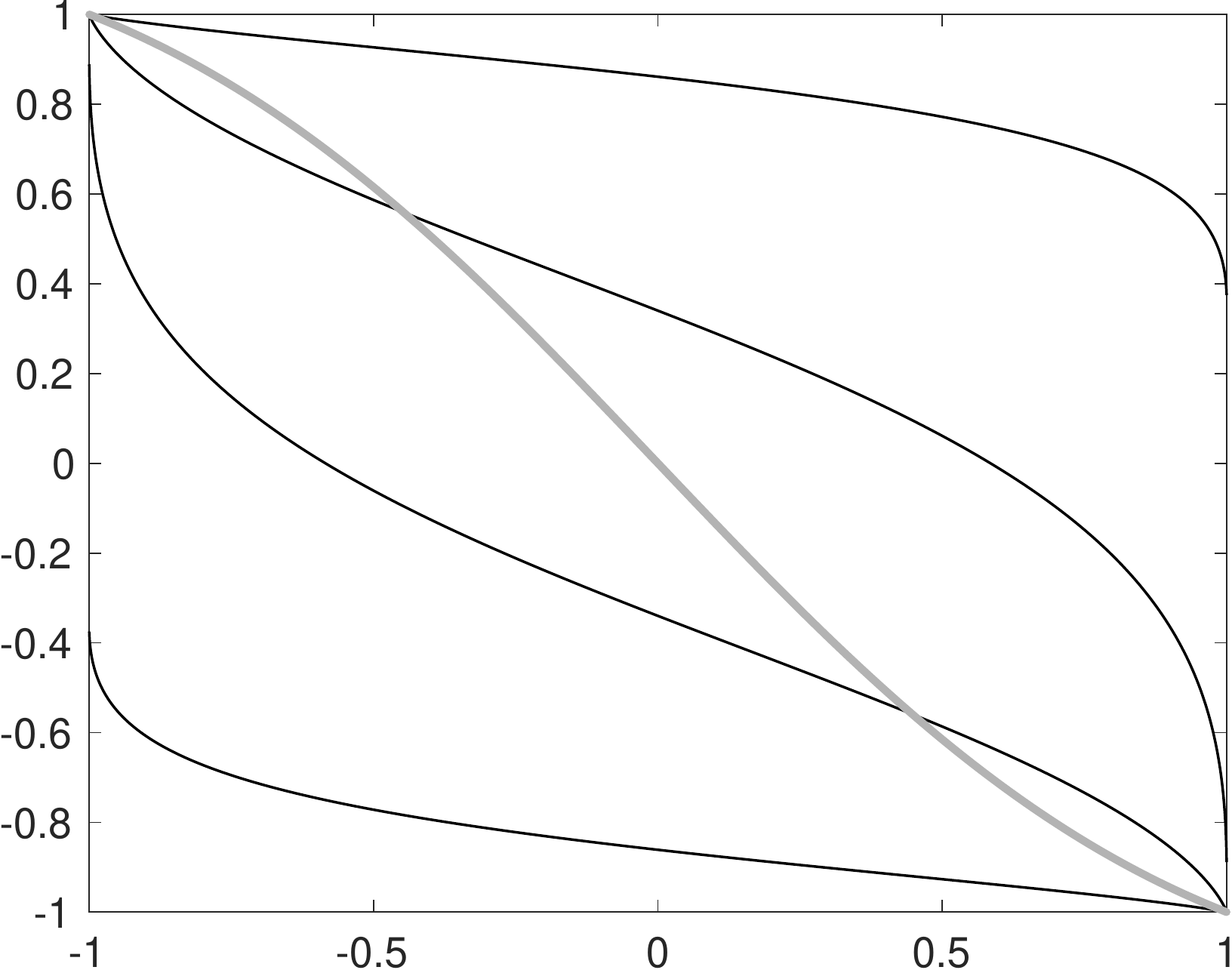}
        \put(15,25){$\cs{3}{0}$}
        \put(16,20){$\downarrow$}
        \put(18,35){$\cs{3}{1}\rightarrow$}
        \put(35,45){$\frac{E_1}{E_0}\rightarrow$}
        \put(60,56){$\cs{3}{2}$}
        \put(61,52){$\downarrow$}
        \put(80,57){$\cs{3}{3}$}
        \put(81,63){$\uparrow$}
        \put(60,-1){$\alpha$}
    \end{overpic}\qquad\qquad
    \begin{overpic}[width=0.45\textwidth]{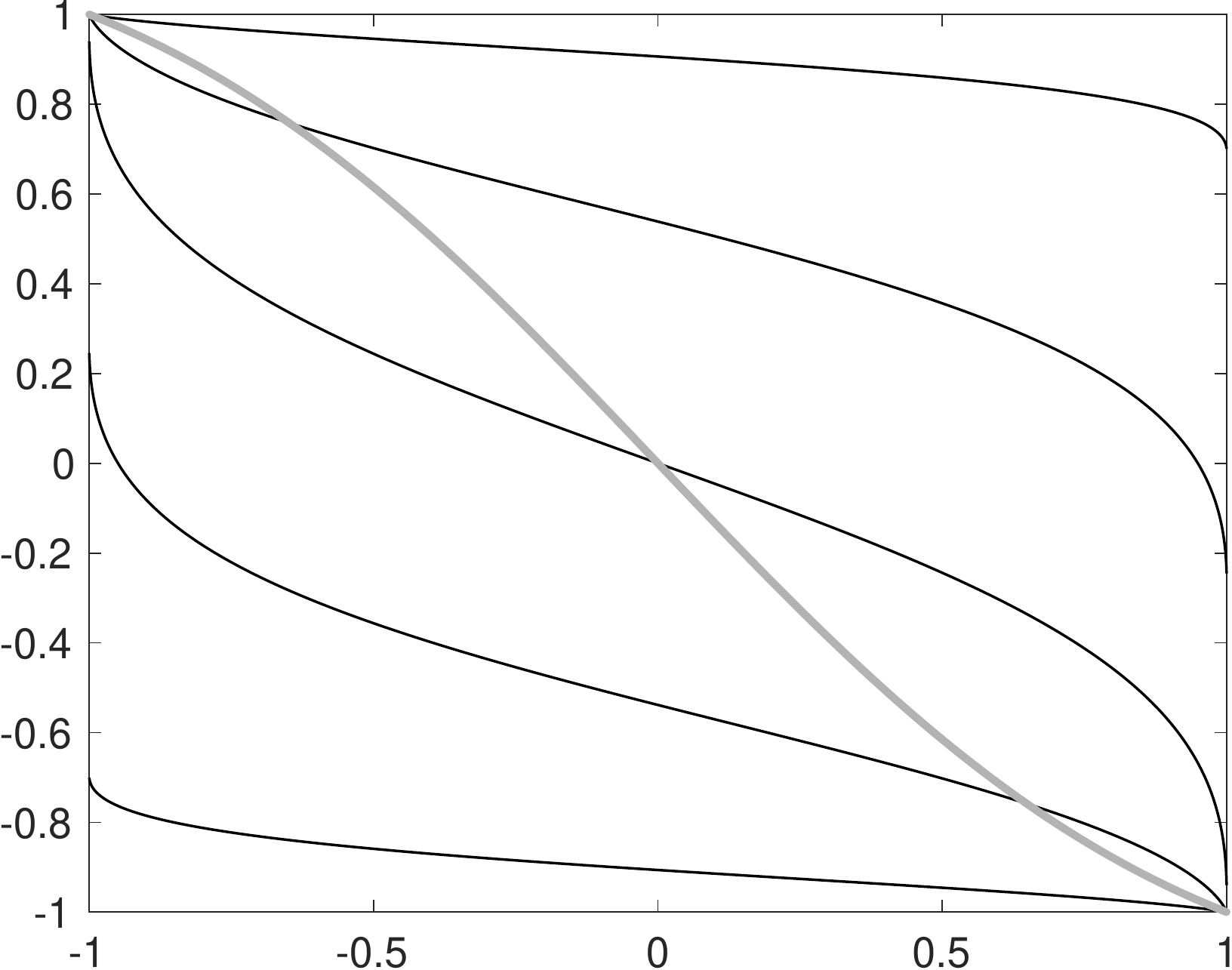}
        \put(13,20){$\cs{4}{0}$}
        \put(14,15){$\downarrow$}
        \put(35,16){$\cs{4}{1}$}
        \put(36,21){$\uparrow$}
        \put(23,42){$\cs{4}{2}$}
        \put(24,47){$\uparrow$}
        \put(48,30){$\frac{E_1}{E_0}\rightarrow$}
        \put(60,64){$\cs{4}{3}$}
        \put(61,60){$\downarrow$}
        \put(83,62){$\cs{4}{4}$}
        \put(84,67){$\uparrow$}
        \put(60,-1){$\alpha$}
    \end{overpic}
    \caption{\label{fig:lambdaN}Profile of $\cs{N}{k}$, $k=0,\dots,N$ with $N=3$ and $4$ and
    $\frac{E_1}{E_0}$ with respect to $\alpha\in(-1,1)$.}
\end{figure}
\end{remark}

\section{Numerical Simulation}\label{sec:num}
The regularized reduced model \eqref{eq:regularized_ms} proposed in \cref{sec:ms} can be
reformulated as
\begin{equation}\label{eq:HMPN}
    \dfrac{1}{c} \pd{\bU}{t} + \pd{\bF(\bU)}{z} + {\bf R}(\bU)\pd{\bU}{z} = \bC,
\end{equation}
where $\bU=(E_0,E_1,\cdots,E_N)^T$, $\bF(\bU)=(E_1,E_2,\cdots,E_{N+1})^T$, ${\bf R}\pd{\bU}{z}
=(0,0,\cdots,-\mR_N)^T$, and $C_k=\int_{-1}^{1}\mu^k\mS(\mP I)\dd\mu$. 
Here $E_{N+1}$ is given by the moment closure of the \MPN model in \eqref{eq:MPNclosure} and $\mR_k$
is defined as $\eqref{eq:mR}$.
In this section, we investigate the numerical scheme for the
regularized reduced model
\eqref{eq:HMPN}, and perform numerical simulations on some typical examples to demonstrate
its numerical efficiency.

\subsection{Numerical scheme}\label{sec:scheme}
Denote the computational domain by $[z_l,z_r]$, which is discretized uniformly by $N_{\cell}$ cells.
The $i$-th mesh cell is $[z_{i-1/2}, z_{i+1/2}]$, $i=1,\dots,N_\cell$ with $z_{i+1/2}=z_l+i\Delta z$
and $\Delta z=\frac{z_r-z_l}{N_\cell}$. 
Let $\bU_i^n$ be the approximation of the solution $\bU$ on the $i$-th mesh cell at the $n$-th time
step $t_n$. 

To construct the numerical scheme for \eqref{eq:HMPN}, we split it into two parts: convection part
and the source part as
\begin{align}
    \label{eq:convection}
    \text{convection part:}\quad & \frac{1}{c}\pd{\bU}{t}+\pd{\bF(\bU)}{z}+\bR(\bU)\pd{\bU}{z}=0,   \\
    \label{eq:source}
    \text{source part:}\quad & \frac{1}{c}\pd{\bU}{t} = \bC.
\end{align}
Next we study the numerical scheme for the both parts.

\subsubsection{Source term}
The right hand side $\mS(I)$ denotes the actions by the background medium on the photons. 
Generally, it contains a scattering term, an absorption term, and an emission term, and has the form
\cite{Bru02,McClarren2008Semi} 
\begin{equation} \label{eq:sourceterm_initial}
    \mS(I) = \dfrac{1}{2}\sigma_s\int_{-1}^{1} I \dd\mu
    -\sigma_t I + \dfrac{1}{2}ac\sigma_a T^4 + \dfrac{s}{2},
\end{equation}
where $a$ is the radiation constant; $T(z,t)$ is the material temperature;
$\sigma_a(z,T)$, $\sigma_s(z,T)$ and $\sigma_t=\sigma_a+\sigma_s$ are the absorption, scattering,
and total opacity coefficients, respectively; and $s(z)$ is an isotropic external source.
The temperature is related to the internal energy $e$, whose evolution equation is
\begin{equation}\label{eq:internalenergy}
  \pd{e}{t} = \sigma_a \left(\int_{-1}^{1} I\dd\mu -acT^4\right).
\end{equation}
The relationship between $T$ and $e$ is problem dependent, and we will assign it in the numerical
examples when necessary.

Noticing the quartic term $a c\sigma_a T^4$ in $\mS(I)$ and the evolution equation of $e$
\eqref{eq:internalenergy}, we adopt the implicit Euler scheme on them as
\begin{equation}
    \frac{\bU_i^{n+1}-\bU_i^n}{c\Delta t} = \bC^{n+1}_i,\quad
    \dfrac{e_i^{n+1}-e_{i}^n}{\Delta t} = \sigma_{a,i}^{n+1} 
    \left( E_{0,i}^{n+1}-ac(T_{i}^{n+1})^4 \right).
\end{equation}
One can directly check that in the absence of any external source of radiation, i.e., $s = 0$, this
discretization satisfies the conservation of total energy as
\begin{equation}
  \frac{e^{n+1}_{i}-e^n_{i}}{\Delta t} + \frac{E^{n+1}_{0,i}-E^{n}_{0,i}}{c\Delta t} = 0.
\end{equation}

\subsubsection{Convection part}
The hyperbolic regularization in \cref{sec:regularization} modifies the governing equation of $E_N$
such that this equation may not be written into the conservative form. Thus, the classical Riemann
solvers for hyperbolic conservation laws can not be directly applied to solve
\eqref{eq:convection}. Here we adopt the DLM theory \cite{Maso} to deal with the non-conservative
terms. The key idea of the DLM theory is introducing a path $\Gamma(\tau;\cdot,\cdot)$,
$\tau\in[0,1]$ to connect two states $\bU^L$ and $\bU^R$ beside the Riemann problem such that
\begin{equation}\label{eq:path}
    \Gamma(0;\bU^L,\bU^R) = \bU^L,\quad
    \Gamma(1;\bU^L,\bU^R) = \bU^R.
\end{equation}
The path allows a generalization of the Rankine-Hugoniot condition to the non-conservative system
as
\begin{equation} \label{eq:generalizedRH}
  \bF(\bU^L) - \bF(\bU^R) + \int_0^1 [v_s{\bf I}-\bR(\Gamma(\tau;\bU^L,\bU^R))]
  \pd{\Gamma}{\tau}(\tau;\bU^L,\bU^R)\dd \tau = 0,
\end{equation}
if the two states $\bU^L$ and $\bU^R$ are connected by a shock with shock speed $v_s$.
Then the weak solution of the non-conservative system can be defined.
Readers can find more details of the constrained of path and the theory results in \cite{Maso}.
We then introduce the finite volume scheme in \cite{Rhebergen} to discretize the non-conservative
system \eqref{eq:convection}. This scheme can be treated as a non-conservative version of the HLL
scheme and has been successfully applied on the non-conservative models
\cite{Microflows1D,Qiao}.

Applying the finite volume scheme in \cite{Rhebergen} yields
\begin{equation}
  \label{eq:numericalscheme}
  \dfrac{\bU^{n+1}_i-\bU_i^n}{c\Delta t} + \dfrac{
  \hat{\bF}_{i+1/2}^{n}-\hat{\bF}_{i-1/2}^n}{\Delta z}
  +\dfrac{\hat{\bR}_{i+1/2}^{n-}-\hat{\bR}_{i-1/2}^{n+}}{\Delta z} = 0.
\end{equation} 
Here the flux $\hat{\bF}^n_{i+1/2}$ is the HLL numerical flux for the conservative term
$\pd{\bF(\bU)}{z}$, given by
\begin{equation} \label{eq:HLLflux_con}
  \hat{\bF}_{i+1/2}^{n} =
  \begin{cases}
      \bF(\bU_{i}^n), &  \lambda^L_{i+1/2}\geq 0,\\
      \dfrac{\lambda^R_{i+1/2}
      \bF(\bU_i^n)-\lambda^L_{i+1/2}\bF(\bU_{i+1}^n)+\lambda^L_{i+1/2}\lambda^R_{i+1/2}(\bU^n_{i+1}-\bU^n_i)}
      {\lambda^R_{i+1/2}-\lambda^L_{i+1/2}},  & \lambda^L_{i+1/2}<0<\lambda^R_{i+1/2},\\
      \bF(\bU_{i+1}^n),   &  \lambda^R_{i+1/2}\leq 0,\\
  \end{cases}
\end{equation}
where $\lambda^L_{i+1/2}$ and $\lambda^R_{i+1/2}$ are defined as 
\[
    \lambda^L_{i+1/2} = \min(\lambda^L_{i},\lambda^L_{i+1}),\quad 
    \lambda^R_{i+1/2} = \max(\lambda^R_{i},\lambda^R_{i+1}).
\]
Here $\lambda^{L}_{i}$ and $\lambda^R_{i}$ are the minimum and maximum characteristic speeds of
$\bU_{i}^{n}$, respectively. 
The flux $\hat{\bR}_{i+1/2}^{n\pm}$ is the special treatment of the finite volume scheme in
\cite{Rhebergen} for the non-conservative term $\bR(\bU)\pd{\bU}{z}$, given by
\begin{equation} \label{eq:HLLflux_nonconm}
  \hat{\bR}_{i+1/2}^{n-}=
  \begin{cases}
    0,  &  \lambda^L_{i+1/2}\geq 0,\\
    -\dfrac{\lambda^L_{i+1/2}\bg_{i+1/2}^n}
    {\lambda^R_{i+1/2}-\lambda^L_{i+1/2}},  & \lambda^L_{i+1/2}<0<\lambda^R_{i+1/2},\\
    \bg_{i+1/2}^n,   &  \lambda^R_{i+1/2}\leq 0,\\
  \end{cases}
\end{equation}
and 
\begin{equation} \label{eq:HLLflux_nonconp}
  \hat{\bR}_{i+1/2}^{n+}= 
  \begin{cases}
    -\bg^{n}_{i+1/2},    &  \lambda^L_{i+1/2}\geq 0,\\
    -\dfrac{\lambda^R_{i+1/2}\bg_{i+1/2}^n}
    {\lambda^R_{i+1/2}-\lambda^L_{i+1/2}},  & \lambda^L_{i+1/2}<0<\lambda^R_{i+1/2},\\
    0,  &  \lambda^R_{i+1/2}\leq 0,\\
  \end{cases}
\end{equation}
where  
\begin{equation} \label{eq:integrateR}
  \bg_{i+1/2}^n=\int_{0}^{1}{\bf R}(\Gamma(\tau;\bU_i^n,\bU_{i+1}^n))
  \pd{\Gamma}{\tau}(\tau;\bU_{i}^n,\bU_{i+1}^n)\dd \tau.
\end{equation}

Since the implicit scheme is adopted in the discretization of the source term, one can easily check
that the discretization is unconditionally stable. Thus the time step is constrained by the
convection term and complies with the CFL condition 
\begin{equation}
    \text{CFL} := \max_{i,k} |\cs{N}{k}(\bU_i^n)| \frac{\Delta t}{\Delta z} < 1.
\end{equation}
In all the tests in this paper, we set $\text{CFL} = 0.95$.
The \cref{cor:speed} indicates that the maximum speed is less than $1$, i.e.,
\begin{equation}
  \label{eq:maxspeedless1}
  \max_k |\lambda_k(\bU_i^n)| \leq 1.
\end{equation}
While for the \MPN model, as shown in \cref{sec:speed}, the inequality \eqref{eq:maxspeedless1} does
not hold, which limits the time step $\Delta t$.

\add{
\subsubsection{Path selection} \label{subsubsec:pathselection}
The remaining issue is the selection of the path $\Gamma(\tau;\cdot,\cdot)$ in \eqref{eq:path}. 
As is pointed out in \cite{abgrall2010comment}, for a given hyperbolic non-conservative system,
different path $\Gamma(\tau; \cdot, \cdot)$ would give different numerical results. Nevertheless,
many numerical tests have shown that for the non-conservative system reduced from kinetic equation, 
the selection of the path is not so critical \cite{Fan, Microflows1D, Qiao, Cai2018}. This motivates
us to study the reason behind.

Note that the smooth solution does not depend on the path, and the path only affects the way in
which the waves are damped and show no affects on the intrinsic constituent of the solution. For the
RTE, due to the existence of the source term, which may contain a scattering term, an absorption term,
and an emission term, its solution is usually smooth. 
Hence, the choice of the path is not essential if the solution approaches to the solution of the RTE
and is also smooth except for two cases, where the solution might not be smooth.
The first case is that 
subshocks appear in the solution. The choice of the path does make sense. However, in such case, 
the reduced model is inadequate to describe the physical process and the moment order $N$ has to
be increased. 
The other case is that the end time $\ctend$ is small. However, this solution has no
physical significance. The reduced model is designed to approximate the distribution, which is close
enough to the smooth functions, and thus it shows its ability to describe physics after the initial
layer. To sum up, the choice of the path is not essential in solving the reduced model
\eqref{eq:HMPN}.
}

\subsubsection{Boundary condition}

We adopt the method in \cite{MPN} to deal with the boundary condition. The ansatz of the \MPN model
provides an injective function from the moments $E_0,E_1,\cdots,E_N$
to the distribution function $\hat{I}$, which is stated in \cref{sec:MPNmodel},
thus we can construct the boundary condition of the reduced
model based on the boundary condition of the RTE.
Without loss of generality, we take the left boundary  as an example. 

On the left boundary ($z=z_l$), the specific intensity is given by 
\begin{equation}
  I^B(t,\mu) = \left\{
  \begin{aligned}
    &I(z=z_l,t,\mu),\quad &\mu<0,\\
    &I_{\text{out}}(t,\mu),\quad &\mu>0,
  \end{aligned}
  \right.
\end{equation}
where $I_{\text{out}}$ is the specific intensity outside of the domain, which depends on the
specific problem and the intensity inside the domain on the boundary $I(z=z_l,t,\mu)$. Here we list 
some of the common used boundary conditions and the choices of the intensity $I_{\text{out}}$, for
later use.

\begin{itemize}
  \item Infinite boundary condition:
\begin{equation}
  I_{\text{out}}(t,\mu) = I(z=z_l,t,\mu),\quad 0<\mu\leq 1.
\end{equation}
  \item Reflective boundary condition:
\begin{equation}
  I_{\text{out}}(t,\mu) = I(z=z_l,t,-\mu),\quad 0<\mu\leq 1.
\end{equation}
  \item Vacuum boundary condition:
\begin{equation}
  I_{\text{out}}(t,\mu) = 0, \quad 0<\mu\leq 1.
\end{equation}
  \item Inflow boundary condition:
\begin{equation}
  I_{\text{out}}(t,\mu) = I_{\text{inflow}}(t,\mu), \quad 0<\mu\leq 1,
\end{equation}
where $I_{\text{inflow}}$ is the specific intensity of the external inflow.
\end{itemize}

Furthermore, we replace the intensity $I(z=z_l,t,\mu)$ by the specific intensity constructed by the
moments in the cell near the left boundary. Precisely, 
\begin{equation}
  I(z=z_l,t,\mu) = \hat{I}\left(\bU(z=z_l,t);\mu\right),
\end{equation}
where $\bU(z=z_l,t)$ is the moments in the 1-st cell $[z_{1/2},z_{3/2}]=[z_l,z_l+\Delta z]$
at time $t$. Then one
can directly obtain the flux across the left boundary. Precisely, the $k$-th flux is given by 
\begin{equation}
  F^B_k = \int_{-1}^{1}I^B(t,\mu)\dd\mu = \int_{-1}^{0} \mu^{k+1} \hat{I}(\mu;\bU(z=z_l,t))\dd\mu 
  +\int_{0}^{1} \mu^{k+1}I_{\text{out}}(t,\mu)\dd\mu.
\end{equation}

\subsection{Numerical results}
In this subsection, \add{we perform simulations to validate the correctness of the numerical scheme
in \cref{sec:scheme}, and} also to demonstrate the utility and numerical efficiency of the reduced
model \eqref{eq:HMPN} by comparing with the \MPN model and the \PN model. Since the proposed model
is the hyperbolic version of the \MPN model, we call it the \emph{\HMPN model} hereafter.

\add{
\subsubsection{Verification on the path selection} \label{example:scheme}
As is discussed in \cref{subsubsec:pathselection}, the solution of the \HMPN model should not be
sensitive to the path selection $\Gamma(\tau;\cdot,\cdot)$ in dealing with the non-conservative
terms. Here we further verify it numerically.

Noticing the formula of the regularization term $\mR_k$ in \eqref{eq:mR} is depicted by $f_N$ and
$\alpha$, we define the path $\gamma(\tau;\bw^L,\bw^R)$ instead of $\Gamma(\tau;\bU^L,\bU^R)$, where
$\bw$ and $\bU$ are uniquely determined by each other. For the path $\gamma$, we select it in the
form
\begin{equation}\label{eq:pathchoices}
  \gamma(\tau; \bw^L, \bw^R) = \bw^L + \tau^k(\bw^R - \bw^L),\quad 0\leq \tau\leq 1.
\end{equation}
The following numerical experiment will show that the \HMPN model is not sensitive to the choice of 
the parameter $k$.
Moreover, the compound Simpson formula with $N_{\mathrm{intvl}}$ intervals is used to evaluate the
integral in \eqref{eq:integrateR}. 

We consider the Riemann problem with the initial value
\begin{equation}\label{eq:RiemannExample}
  I = \begin{cases}
    2I_0, & x\leq 0,\\ 
    I_0, & x> 0, 
  \end{cases}
\end{equation}
where $I_0$ is given by
\begin{equation}
  I_0(\mu) = a c \dfrac{w_0}{(1-0.08\mu-0.85\mu^2)^4},\quad -1\leq \mu\leq 1,
\end{equation}
with $w_0$ to be a constant such that $\frac{1}{a c}\int_{-1}^{1} I_0(\mu) \dd\mu = 1$.
The computational domain is set as $[-0.5,0.5]$ with $N_{\cell}=10000$ discretization cells, and the
end time is $\ctend=0.1$.

\begin{figure}[htb]
  \centering 
  \subfloat[$E_0$ for $N=2$]{
      \includegraphics[width=0.33\textwidth,height=0.16\textheight]{./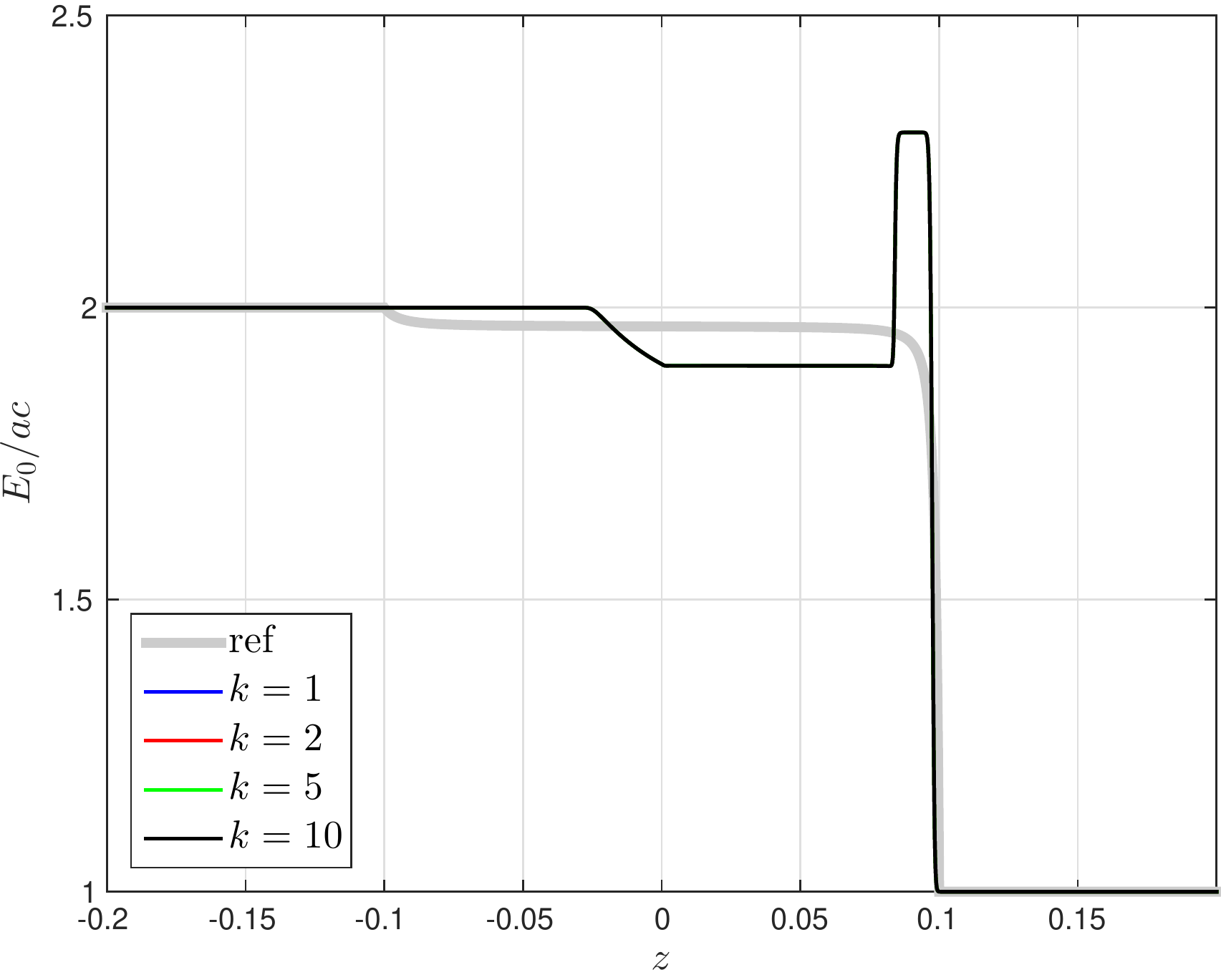}
  }
  \subfloat[$E_0$ for for $N=7$]{
      \includegraphics[width=0.33\textwidth,height=0.16\textheight]{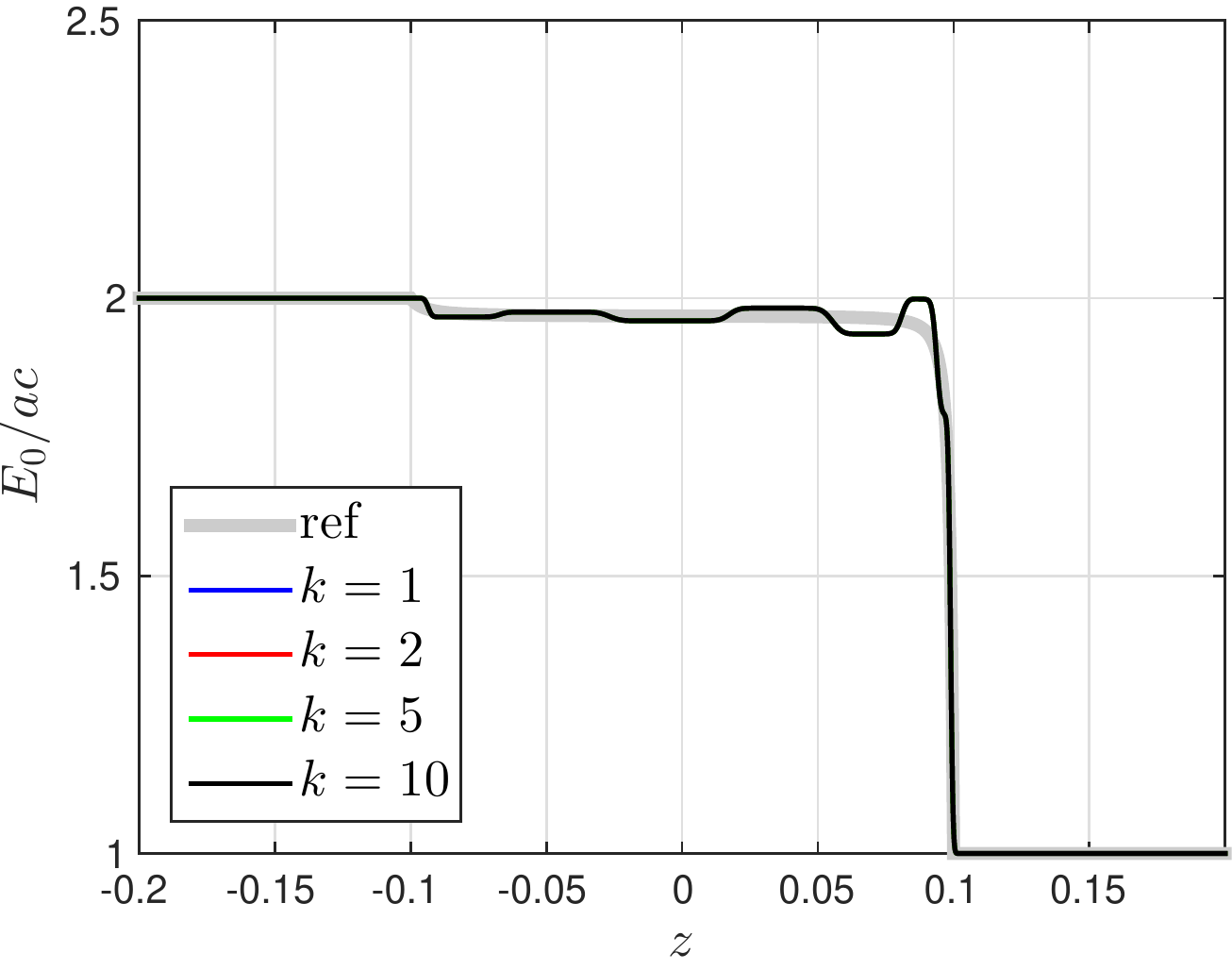}
  }
  \subfloat[$E_0$ for $N=12$]{
      \includegraphics[width=0.33\textwidth,height=0.16\textheight]{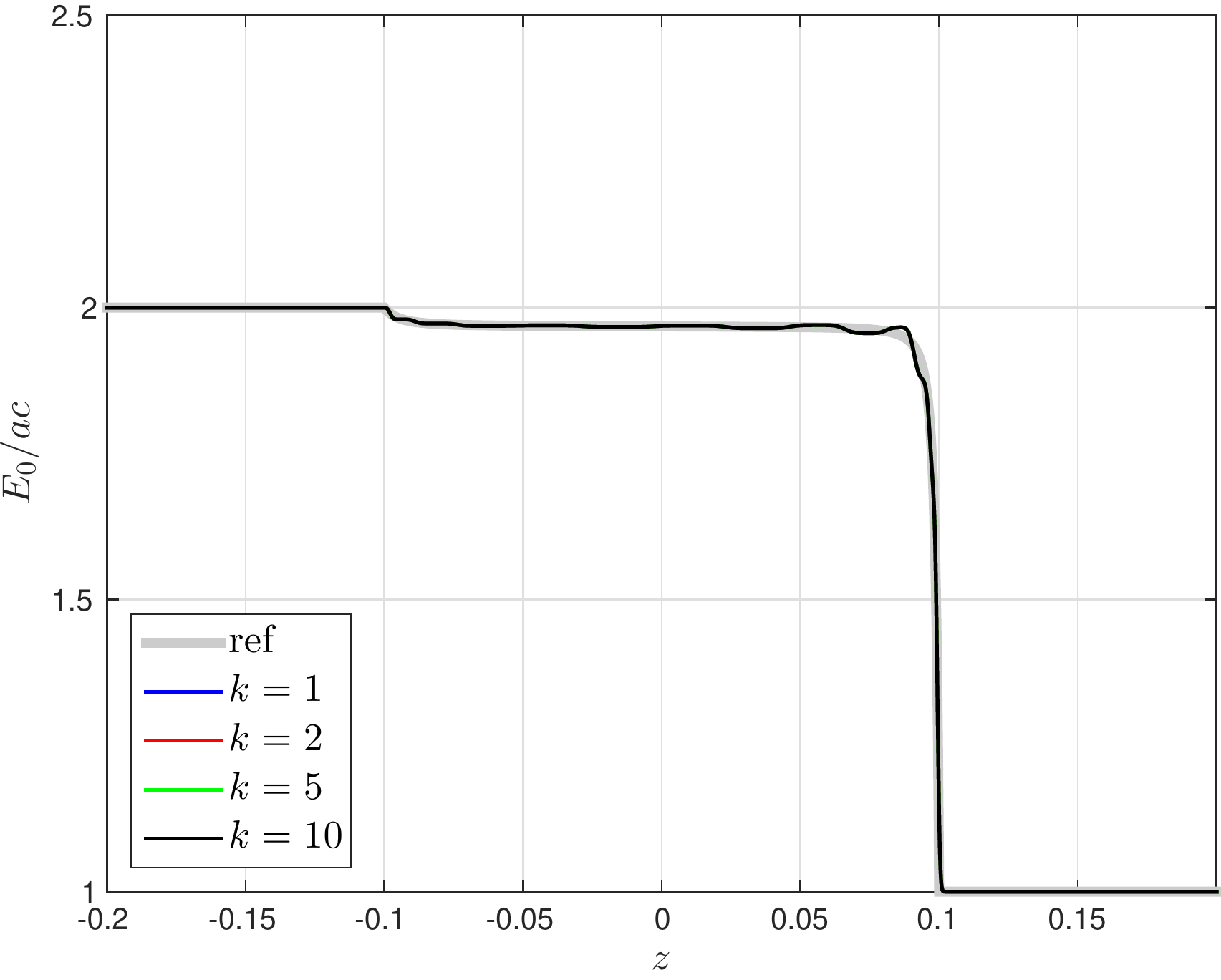}
  } \\
  \subfloat[Relative $L_2$ error of $E_0$ with respect to the solution of $k=1$]{
    \hspace{3mm}
    \begin{tabular}{|p{12mm}<{\centering}|p{0.2\textwidth}<{\centering}|p{0.31\textwidth}<{\centering}|p{0.3\textwidth}<{\centering}|}
      \hline 
      \diagbox{$k$}{$N$} & 2       & 7       & 12      \\ \hline
      2                      & 1.31e-8 & 2.32e-8 & 2.87e-6 \\ \hline
      5                      & 5.30e-7 & 1.03e-7 & 2.93e-6 \\ \hline
      10                     & 1.37e-5 & 1.55e-6 & 3.15e-6 \\ \hline
    \end{tabular}
  }
  \caption{\label{fig:pathselectiontest}
  Profiles of $E_0$ with $N_{\mathrm{intvl}}=10$ for different integral paths of the \HMPN model and
the relative $L_2$ error with respect to the solution of $k=1$.}
\end{figure}

\begin{figure}[htb]
  \centering 
  \subfloat[$E_0$ for $N=2$]{
      \includegraphics[width=0.33\textwidth,height=0.16\textheight]{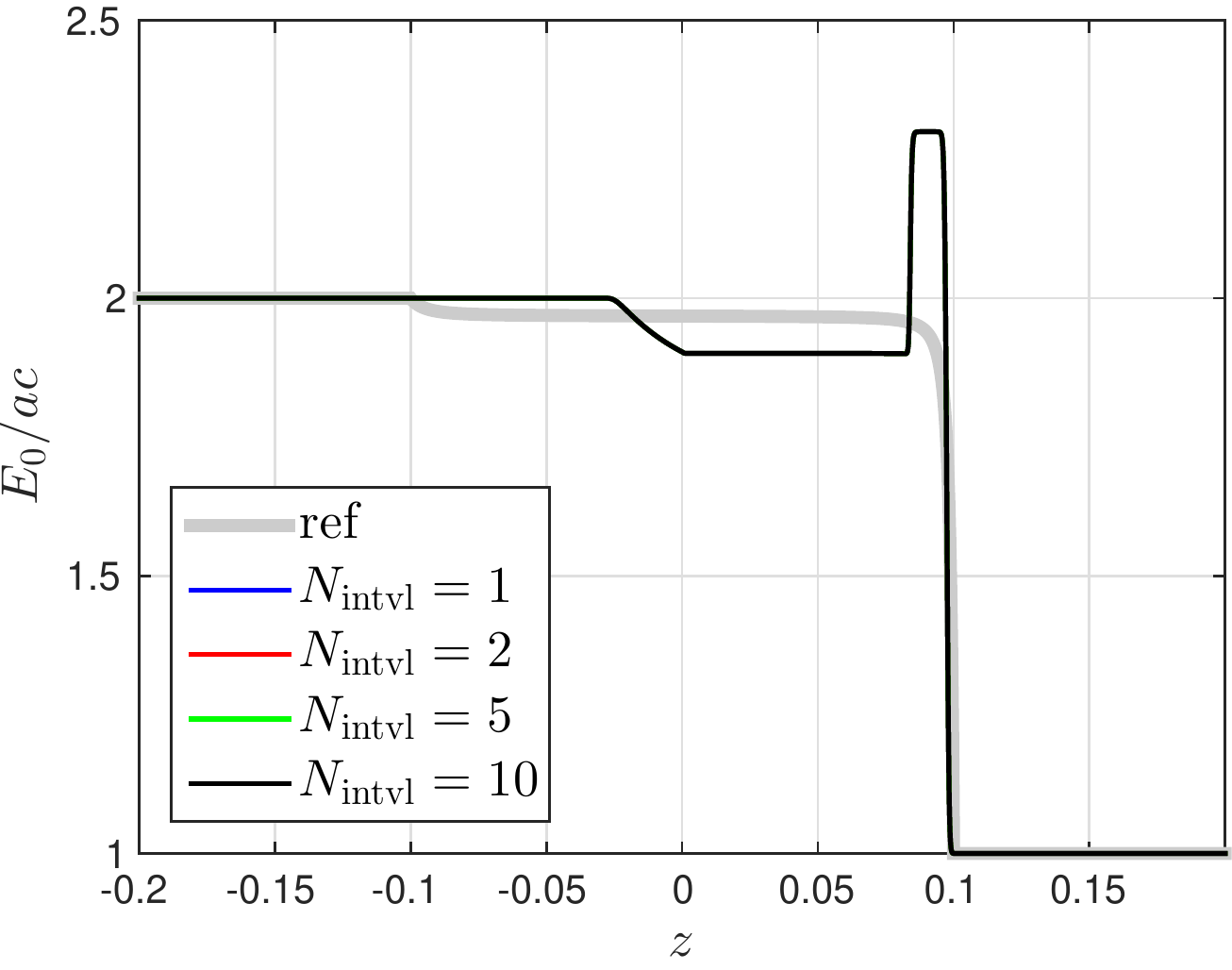}
  }
  \subfloat[$E_0$ for for $N=7$]{
      \includegraphics[width=0.33\textwidth,height=0.16\textheight]{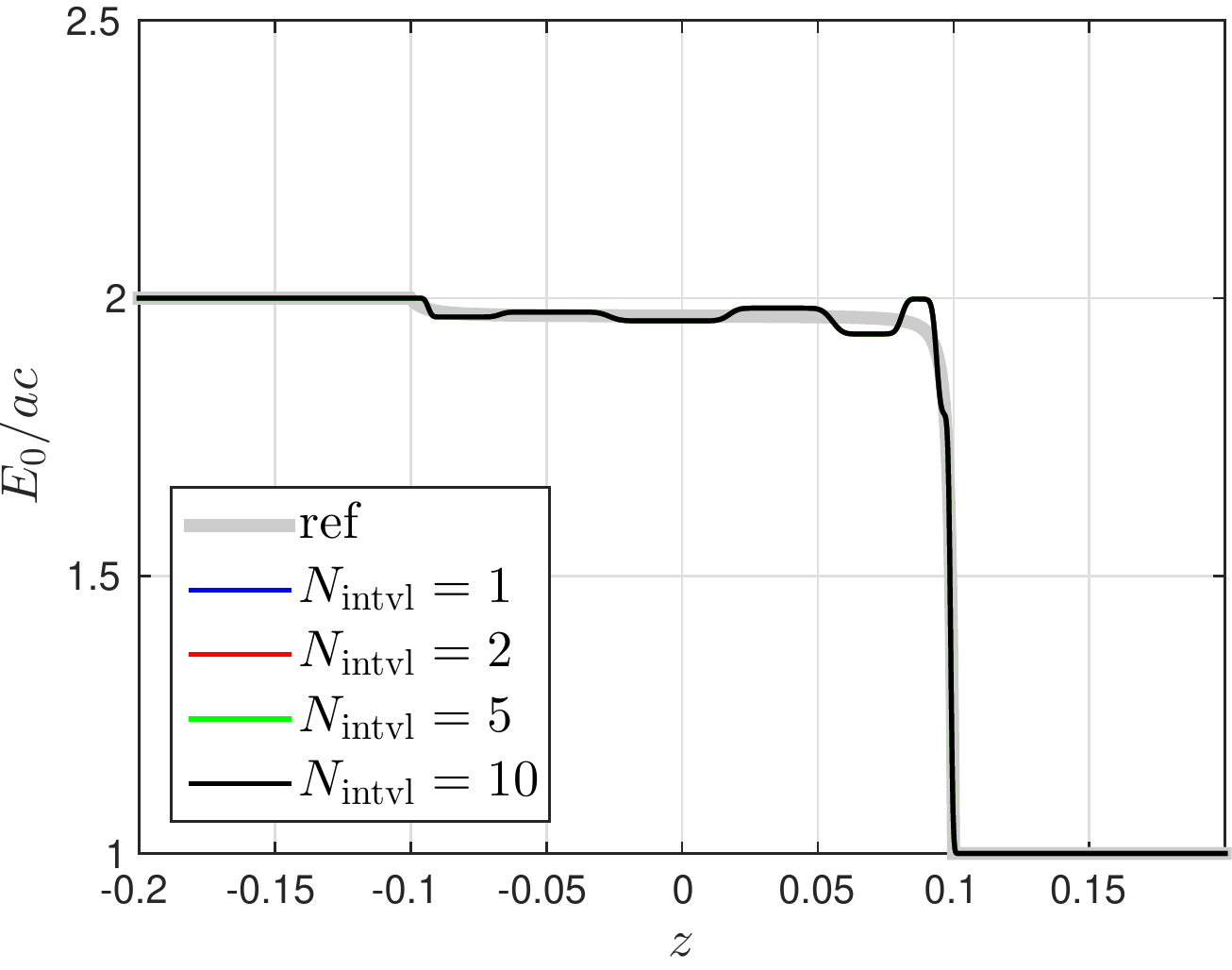}
  }
  \subfloat[$E_0$ for $N=12$]{
      \includegraphics[width=0.33\textwidth,height=0.16\textheight]{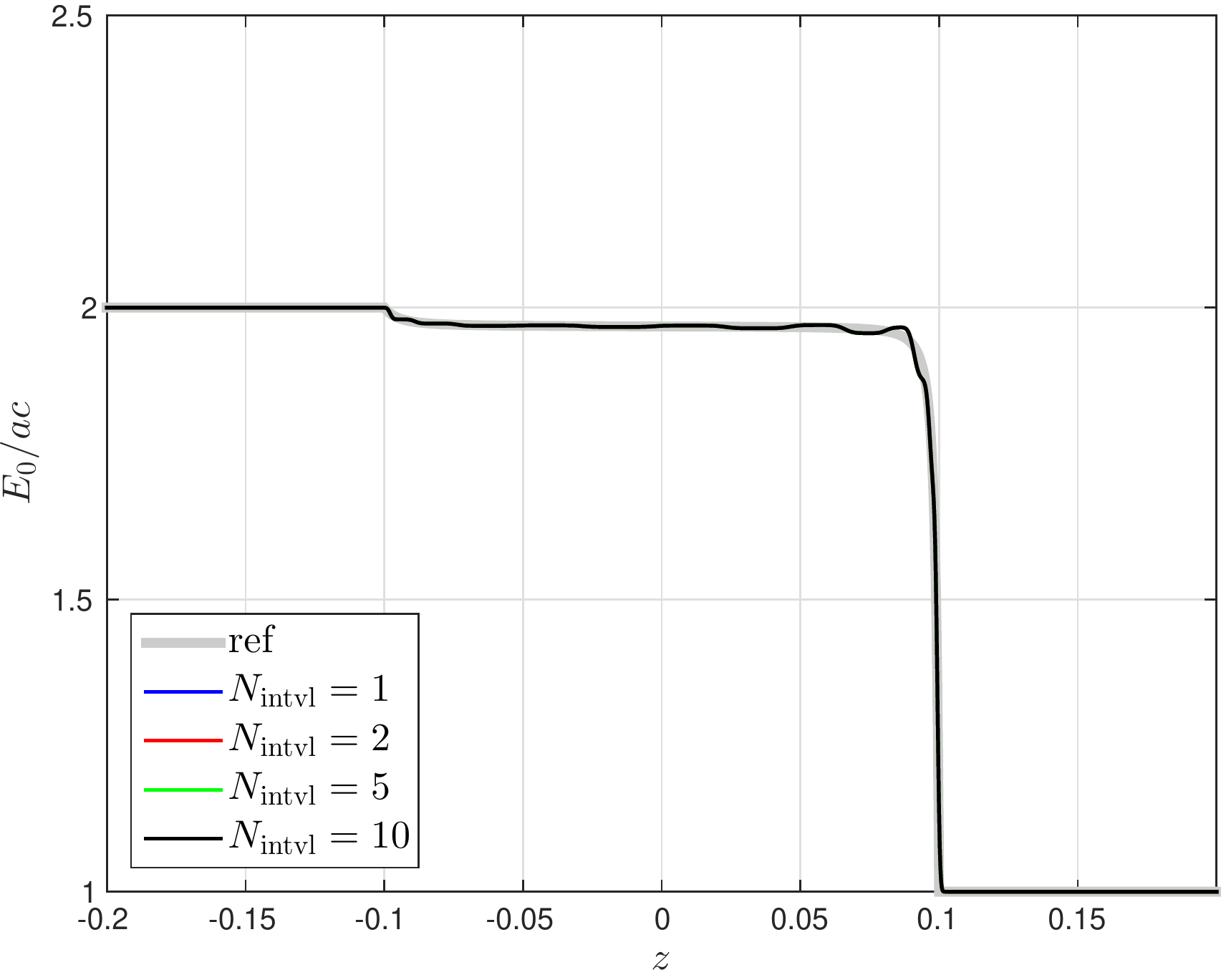}
  } \\
  \subfloat[Relative $L_2$ error of $E_0$ with respect to the solution of $N_{\mathrm{intvl}}=10$]{
    \hspace{6mm}
    \begin{tabular}{|p{1.5cm}<{\centering}|p{0.16\textwidth}<{\centering}|p{0.3\textwidth}<{\centering}|p{0.3\textwidth}<{\centering}|}
     \hline 
     \diagbox[width=18mm]{$N_{\mathrm{intvl}}$}{$N$} & 2       & 7       & 12      \\ \hline
     1                             & 3.73e-7 & 2.33e-8 & 2.85e-6 \\ \hline
     2                             & 7.39e-8 & 3.28e-8 & 2.74e-6 \\ \hline
     5                             & 1.54e-8 & 2.32e-6 & 2.87e-6 \\ \hline 
    \end{tabular}
  }
  \caption{\label{fig:integralformulatest} Profiles of $E_0$ with $k=1$ for different number of
  intervals in the compound Simpson formula of the \HMPN model and the relative $L_2$ error with
  respect to the solution of $N_{\mathrm{intvl}}=10$.}
\end{figure}

\Cref{fig:pathselectiontest} presents the profiles of $E_0$ of the \HMPN model with
$N_{\mathrm{intvl}}=10$ for different integral paths parameter $k=1$, $2$, $5$ and $10$, and the
relative $L_2$ error with respect to the solution of $k=1$. Here the relative $L_2$ error is defined
as 
\begin{equation}
  \mathrm{err} = \frac{\|E_0-E_0^{\mathrm{ref}}\|_{L^2([-0.5,
  0.5])}}{\|E_0^{\mathrm{ref}}\|_{L^2([-0.5, 0.5])}}.
\end{equation}
Clearly, it is hard to distinguish the solutions for different $k$ due to the negligible relative
error. This indicates that the path selection for this problem is indeed not sensitive.
Moreover, we would like to point out that the number of intervals $N_{\mathrm{intvl}}$ in the
compound Simpson formula is sufficient large. \Cref{fig:integralformulatest} gives the results for 
different $N_{\mathrm{intvl}}$ with $k=1$. The relative errors for different cases are all
negligible. Hence, in the following simulations, we always choose the path with $k=1$ and set
$N_{\mathrm{intvl}}=1$.

}
\subsubsection{Hyperbolicity validity} \label{example:hyperbolictest}
We have theoretically showed that the \MPN model with $N\geq 3$ is not globally hyperbolic
while the \HMPN model fixes the hyperbolic issue.  Now we construct examples to validate it. The first
example is a Riemann problem and the second one has a continuous initial value. In order to avoid the
disturbance of the interaction between photons and background, we consider the case that the right
hand side vanishes, i.e., $\mS=0$, then the RTE can be written as
\begin{equation} \label{eq:hyperbolicitytest}
  \dfrac{1}{c} \pd{I}{t} + \mu \pd{I}{z} = 0.
\end{equation} 

\paragraph{A Riemann problem}

\begin{figure}[htb]
  \centering 
  \subfloat[$E_0$ at $\ctend=0.0287$ for $N=4$]{
      \includegraphics[width=0.33\textwidth,height=0.16\textheight]{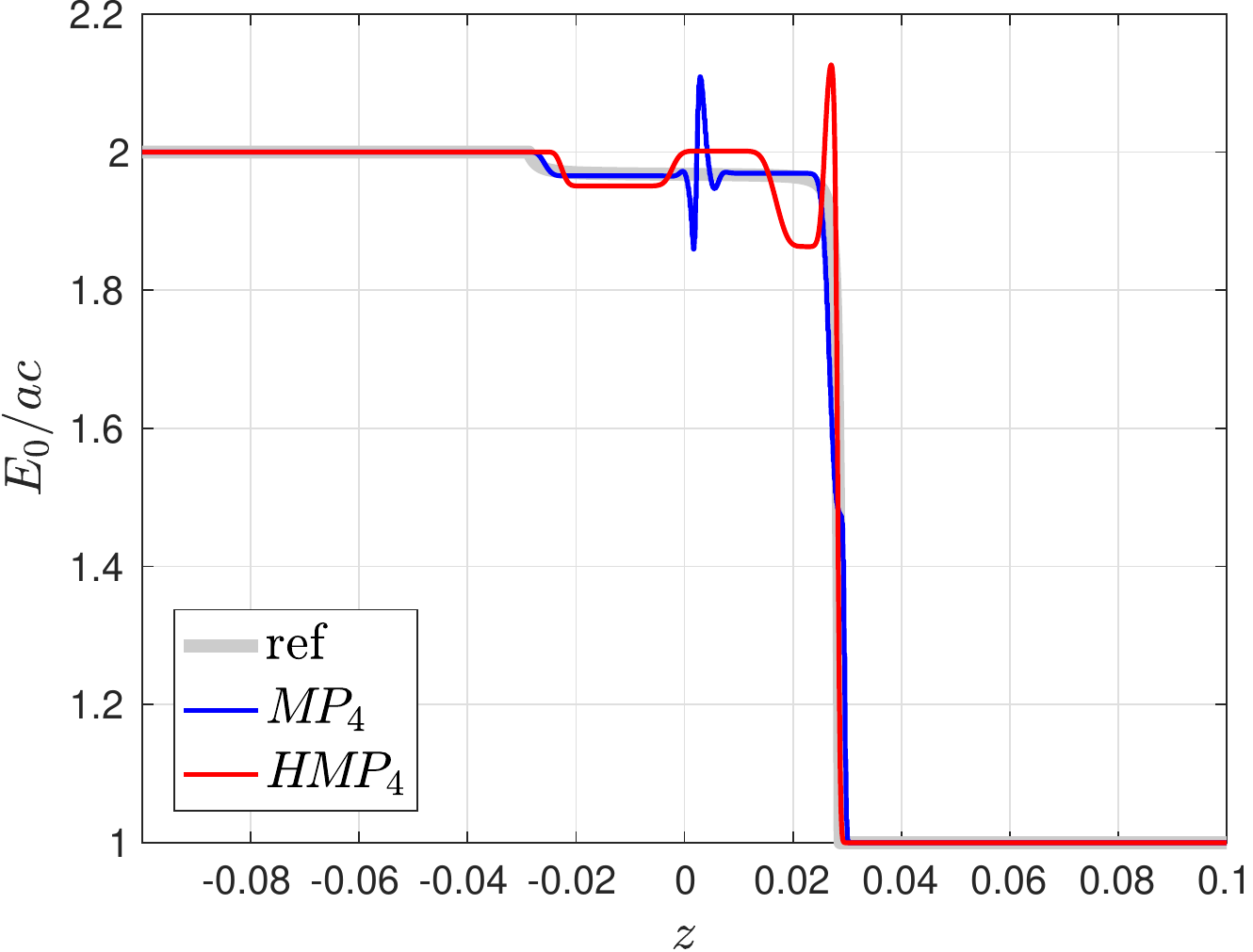}
  }
  \subfloat[$E_0$ at $\ctend=0.051$ for $N=6$]{
      \includegraphics[width=0.33\textwidth,height=0.16\textheight]{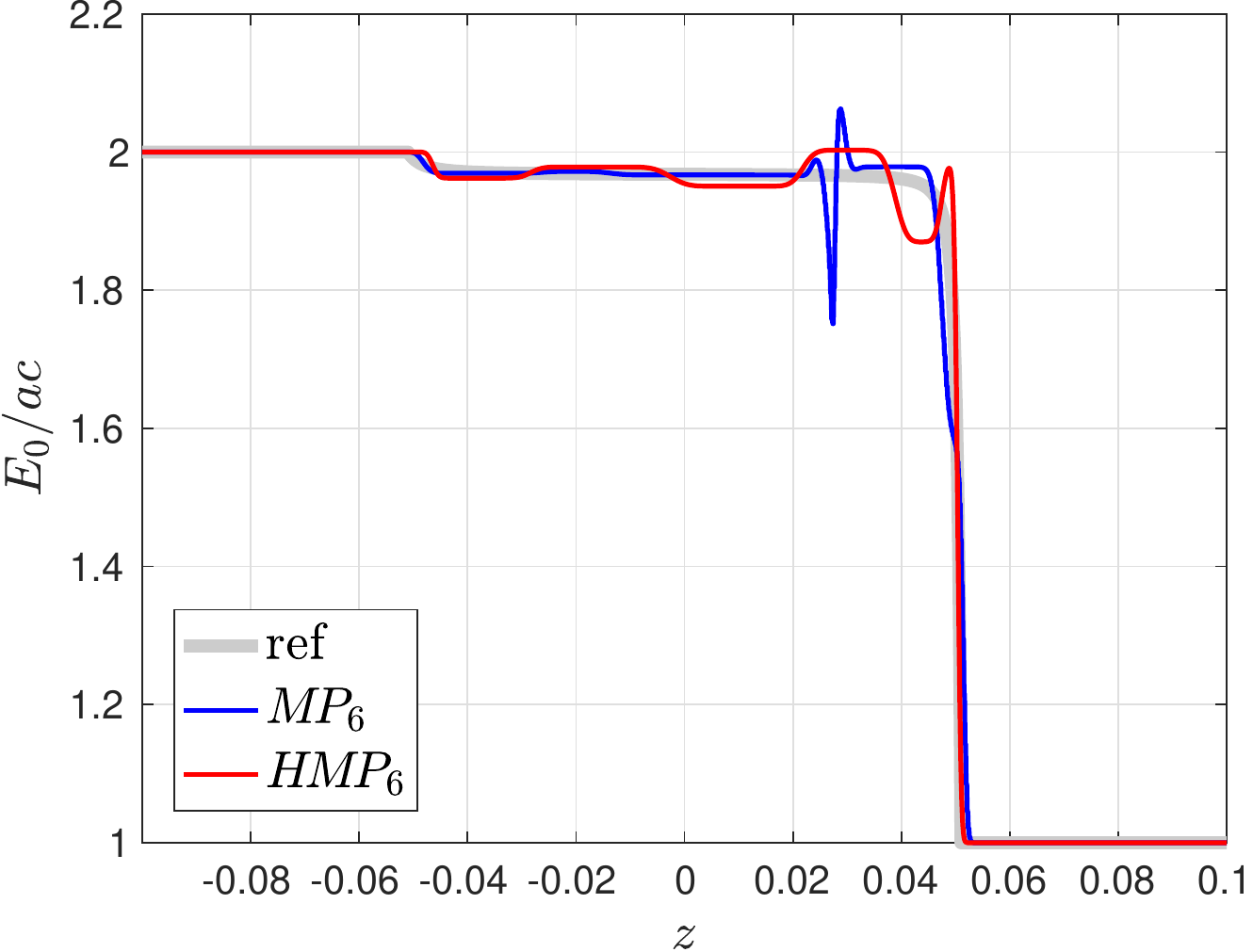}
  }
  \subfloat[$E_0$ at $\ctend=0.169$ for $N=8$]{
      \includegraphics[width=0.33\textwidth,height=0.16\textheight]{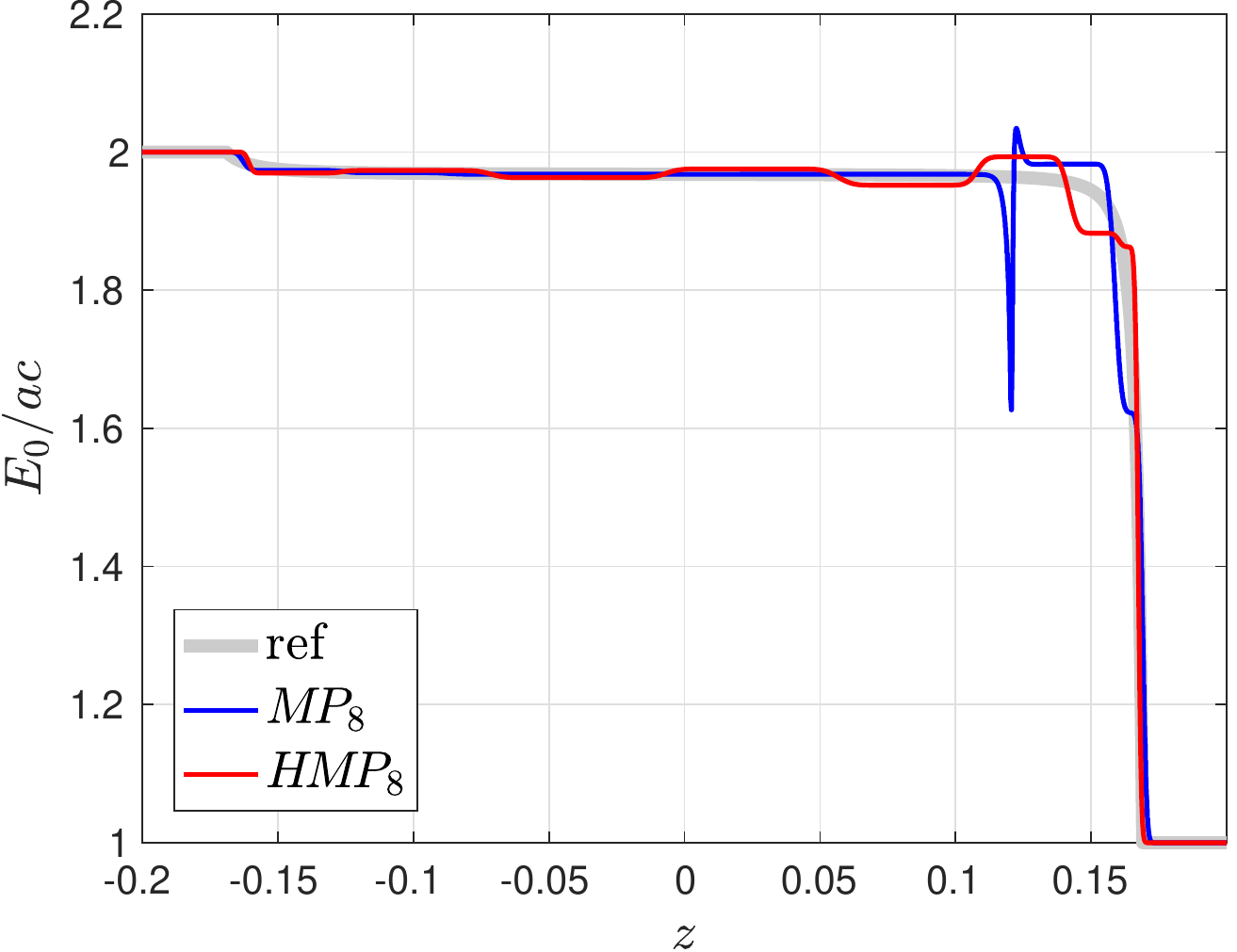}
  } \\
  \subfloat[$\frac{E_1}{E_0}$ at $\ctend=0.0287$ for $N=4$]{
      \includegraphics[width=0.33\textwidth,height=0.16\textheight]{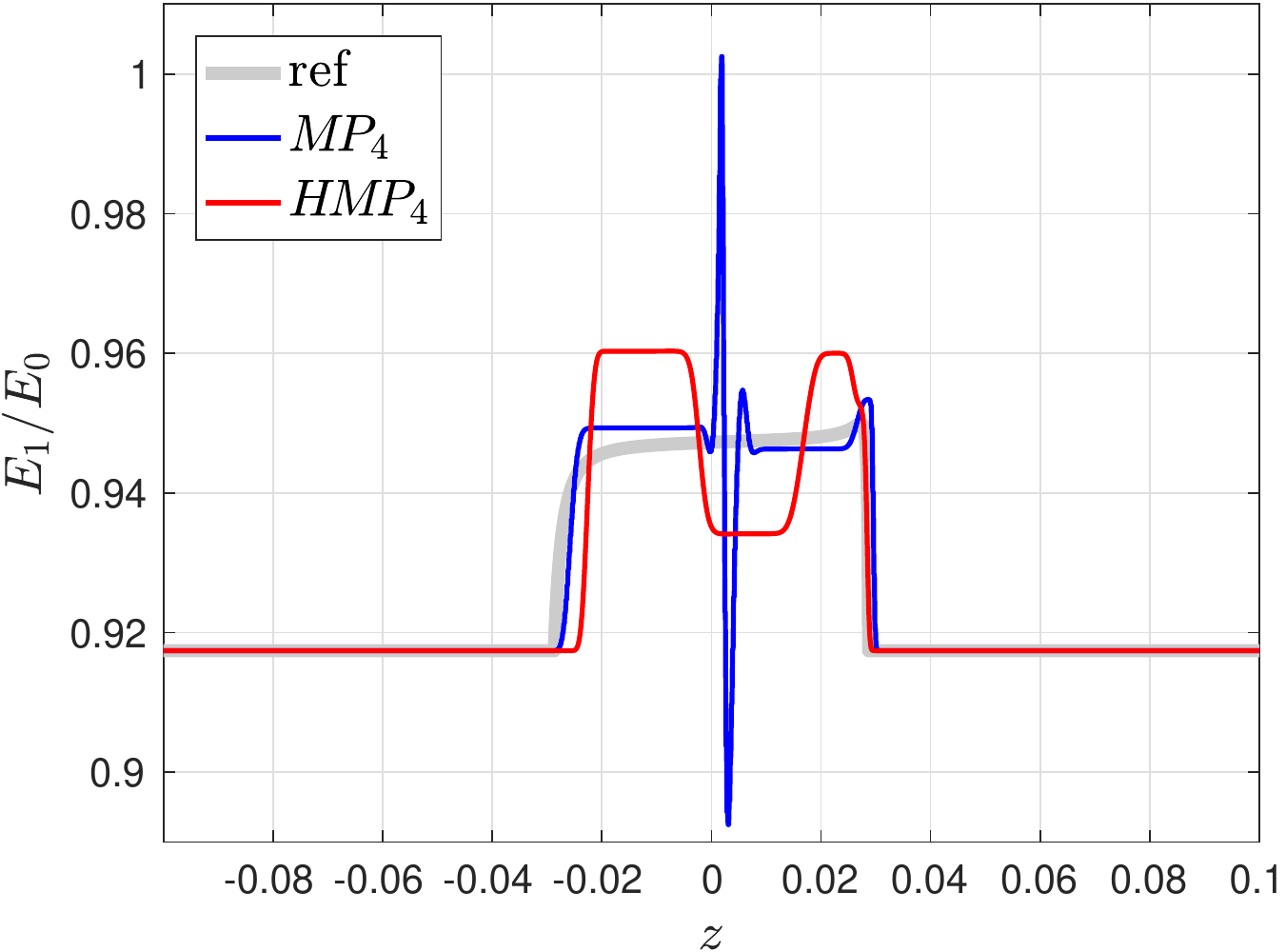}
  }
  \subfloat[$\frac{E_1}{E_0}$ at $\ctend=0.051$ for $N=6$]{
      \includegraphics[width=0.33\textwidth,height=0.16\textheight]{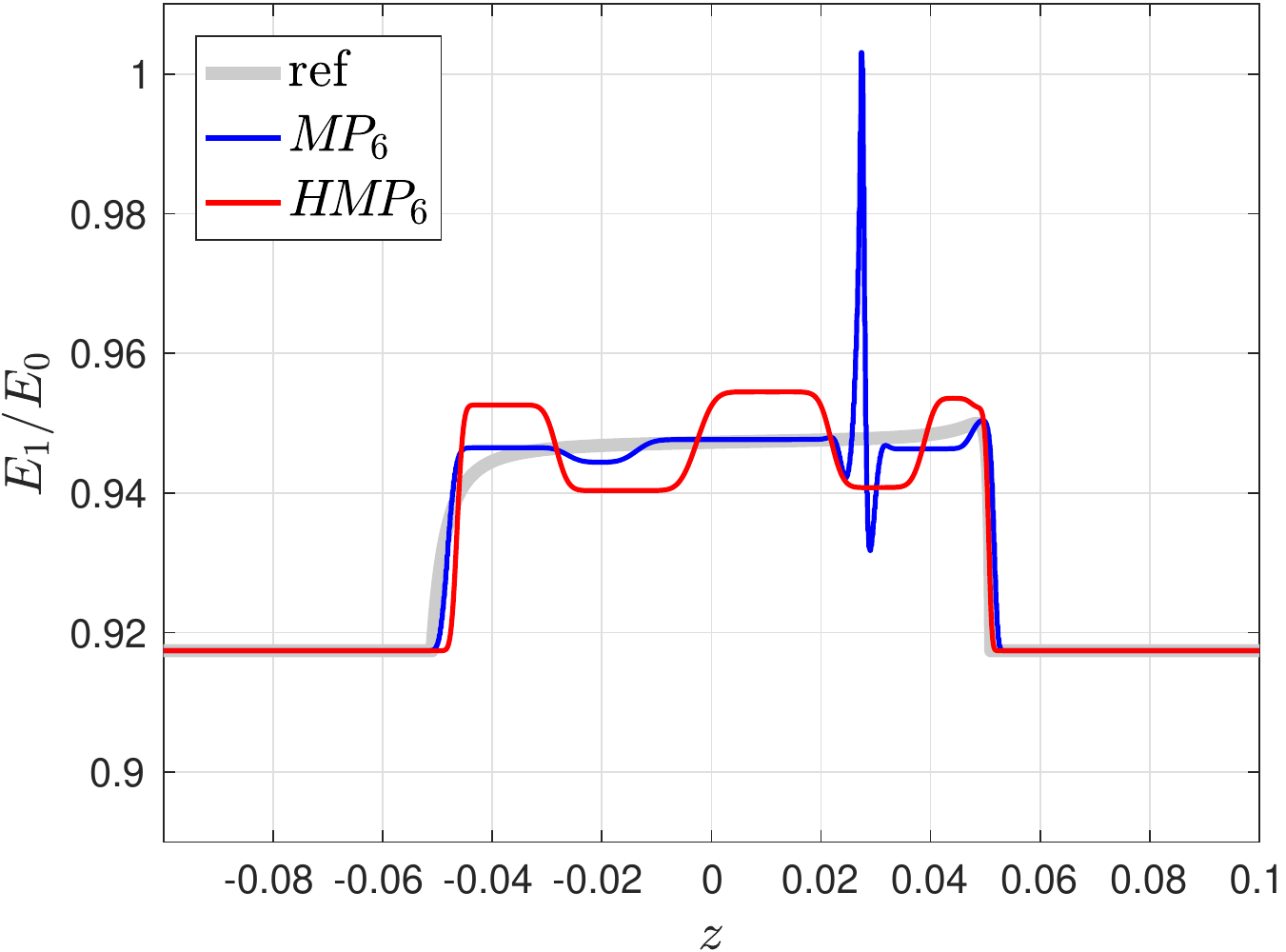}
  }
  \subfloat[$\frac{E_1}{E_0}$ at $\ctend=0.169$ for $N=8$]{
      \includegraphics[width=0.33\textwidth,height=0.16\textheight]{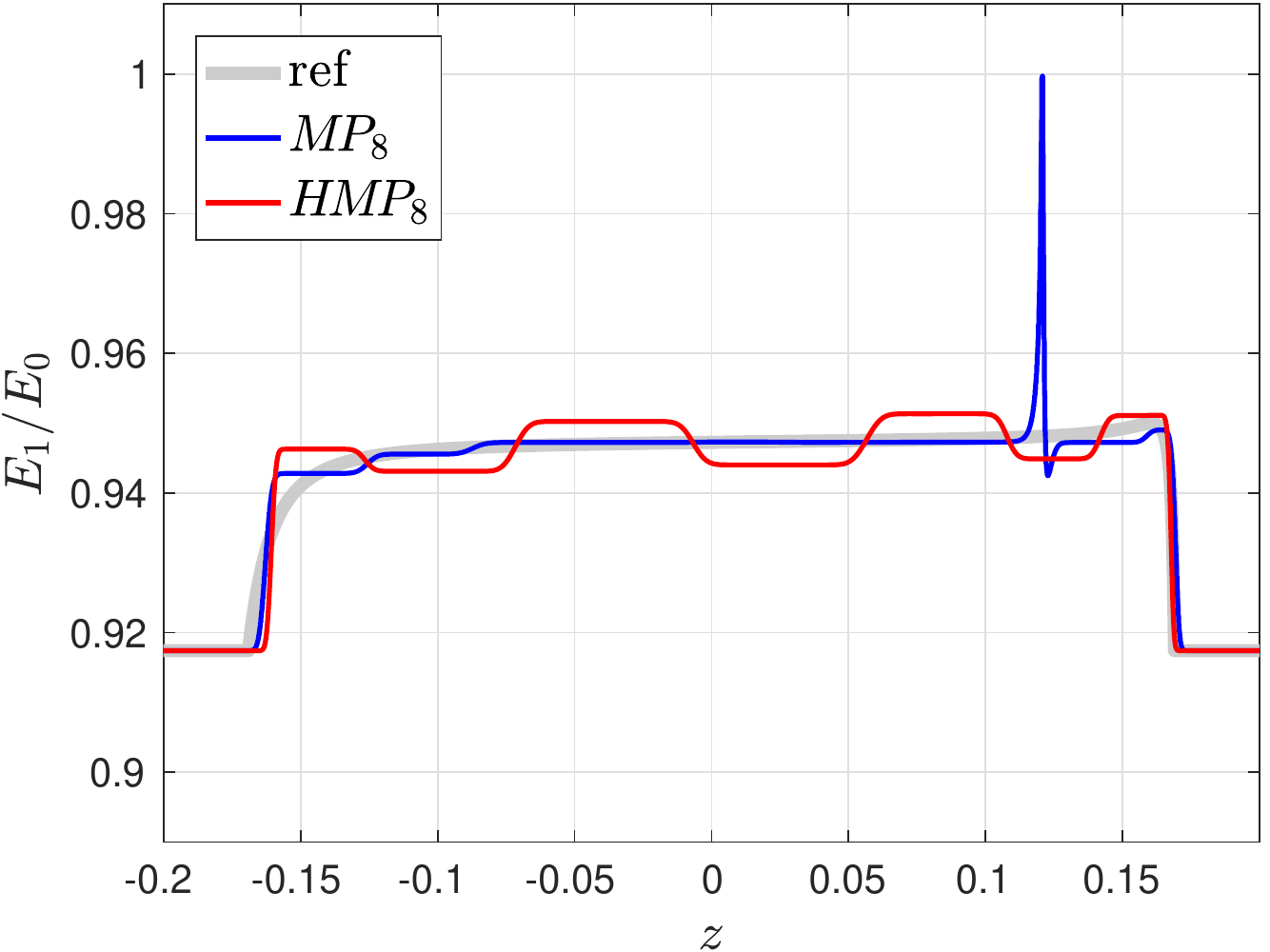}
  }
  \caption{\label{fig:Hyperbolicitytest_ex2}
  Profiles of $E_0$ and $\frac{E_1}{E_0}$ for the \MPN model, the \HMPN model and the analytical
  solution at specific end times for the Riemann problem.}
\end{figure}

\begin{figure}[htb]
  \centering 
  \subfloat[$E_0$ of \HMPN]{
      \includegraphics[width=0.45\textwidth,height=0.22\textheight]{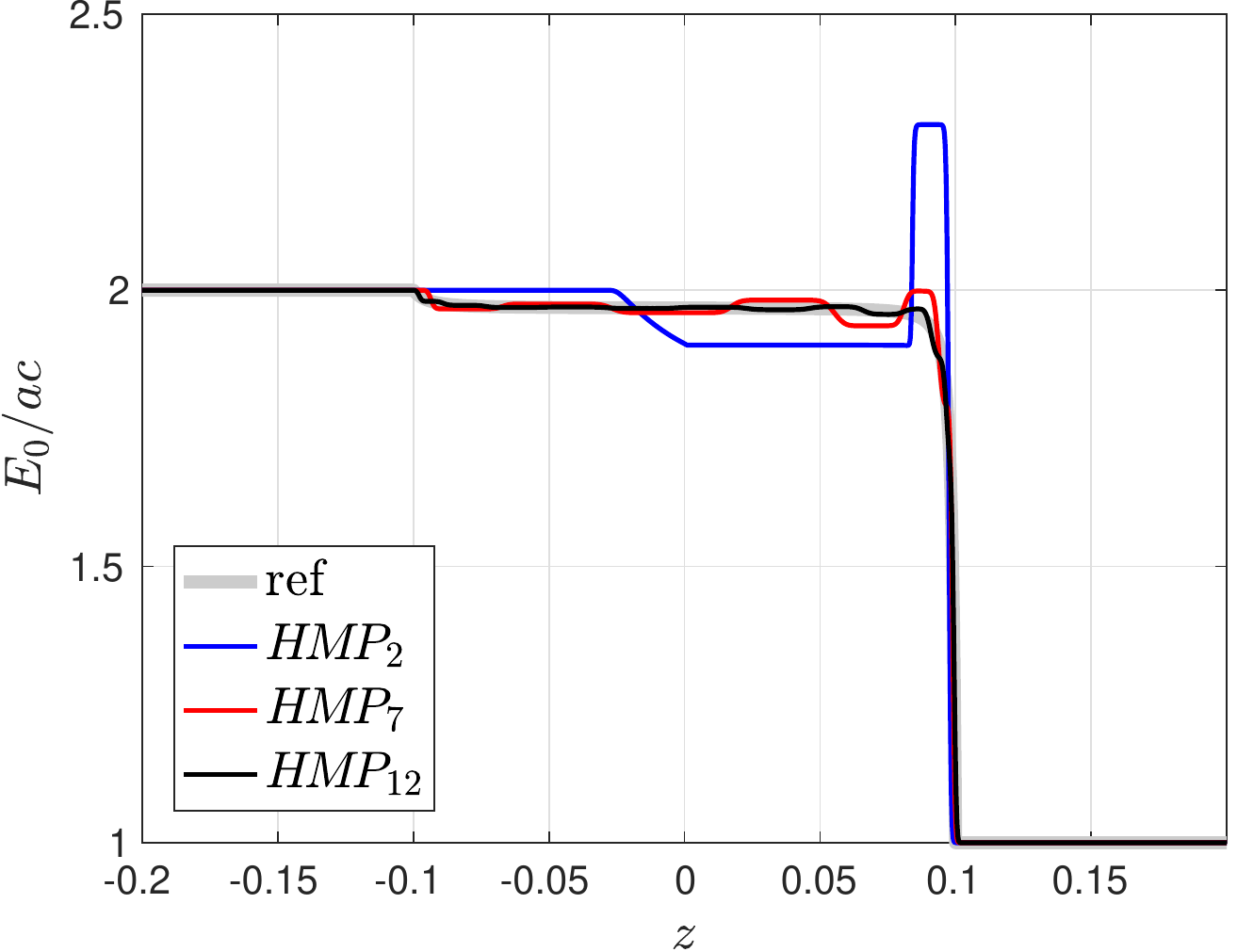}
  }
  \subfloat[$\frac{E_1}{E_0}$ of \HMPN]{
      \includegraphics[width=0.45\textwidth,height=0.22\textheight]{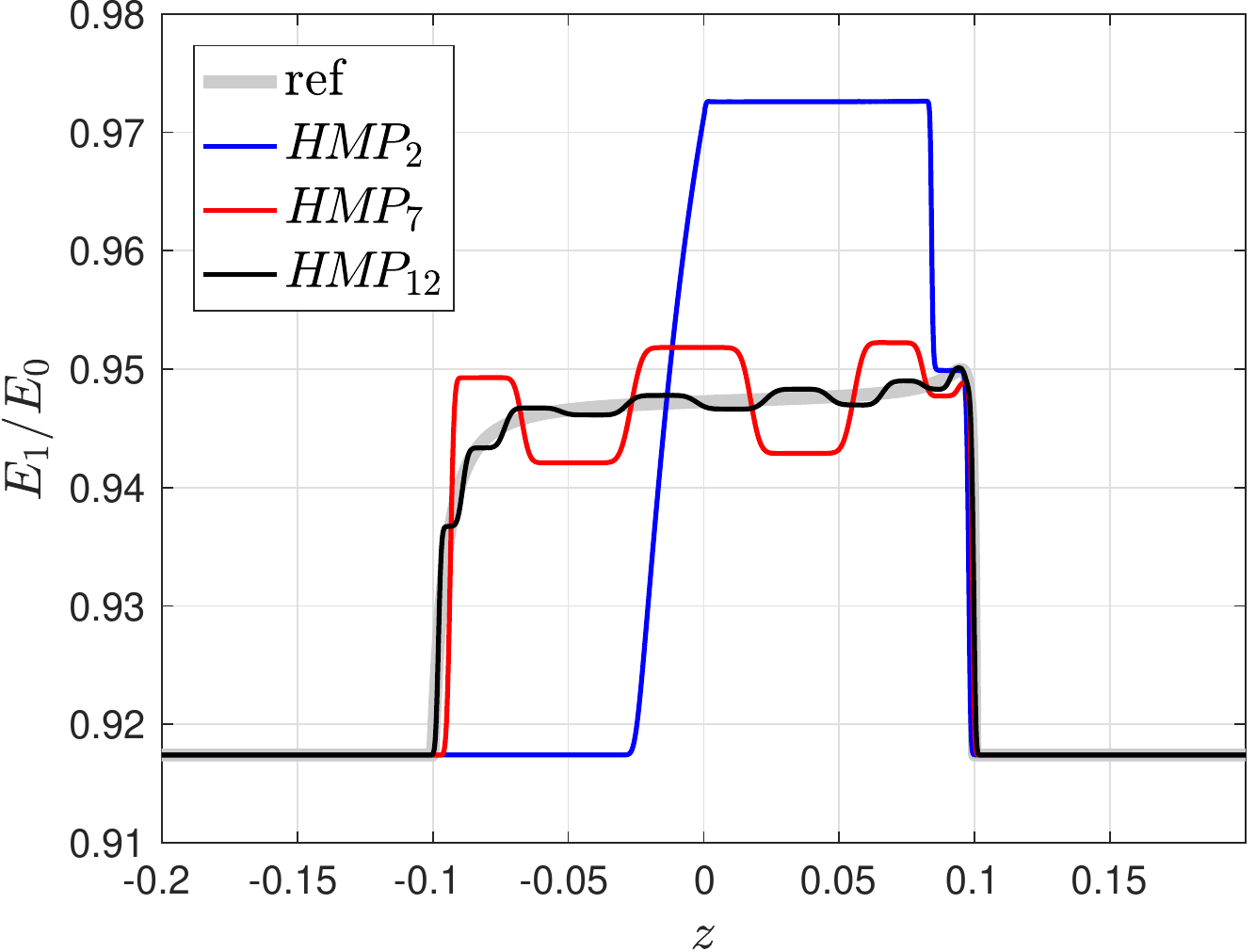}
  }\\
  \subfloat[$E_0$ of \PN]{
      \includegraphics[width=0.45\textwidth,height=0.22\textheight]{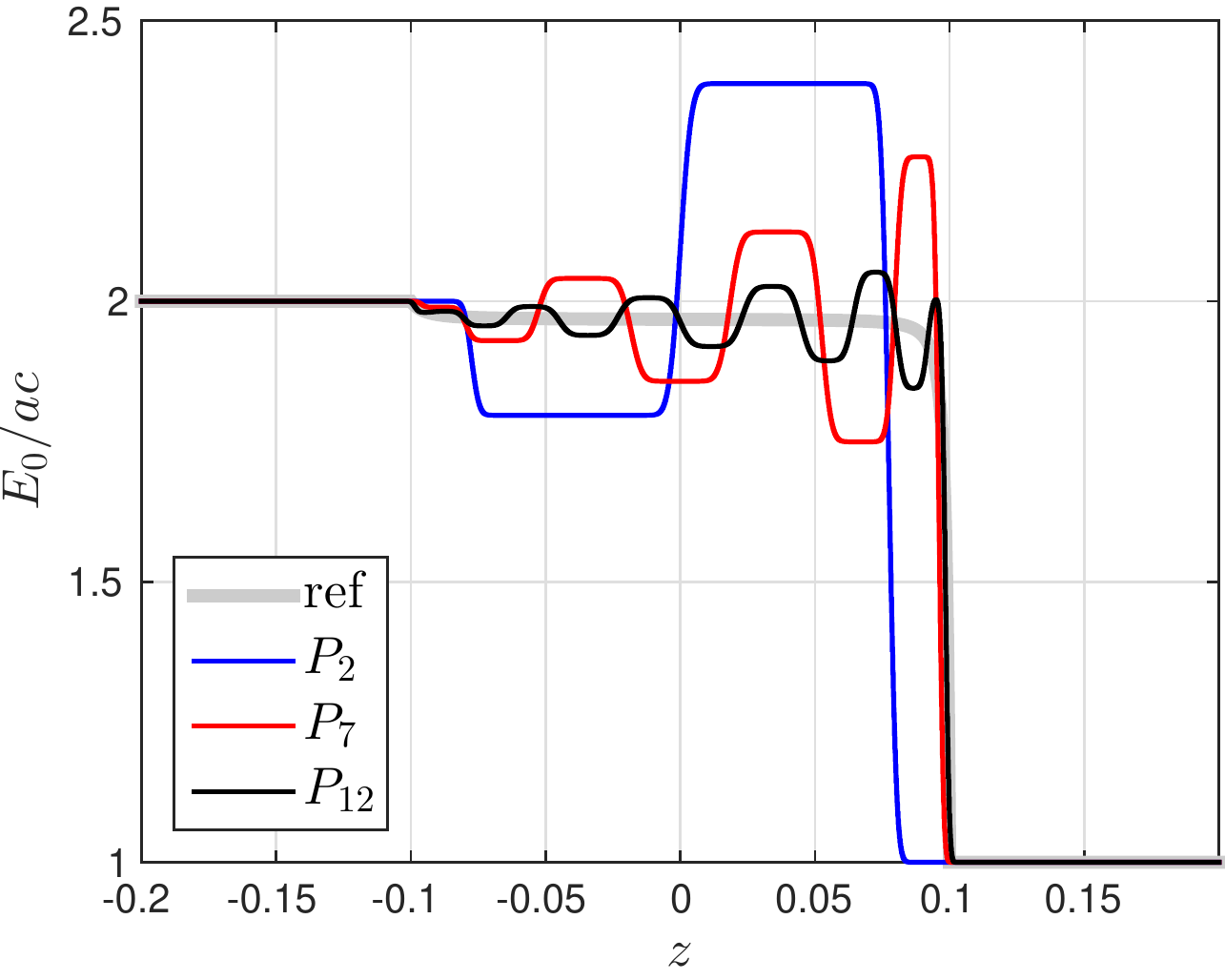}
  }
  \subfloat[$\frac{E_1}{E_0}$ of \PN]{
      \includegraphics[width=0.45\textwidth,height=0.22\textheight]{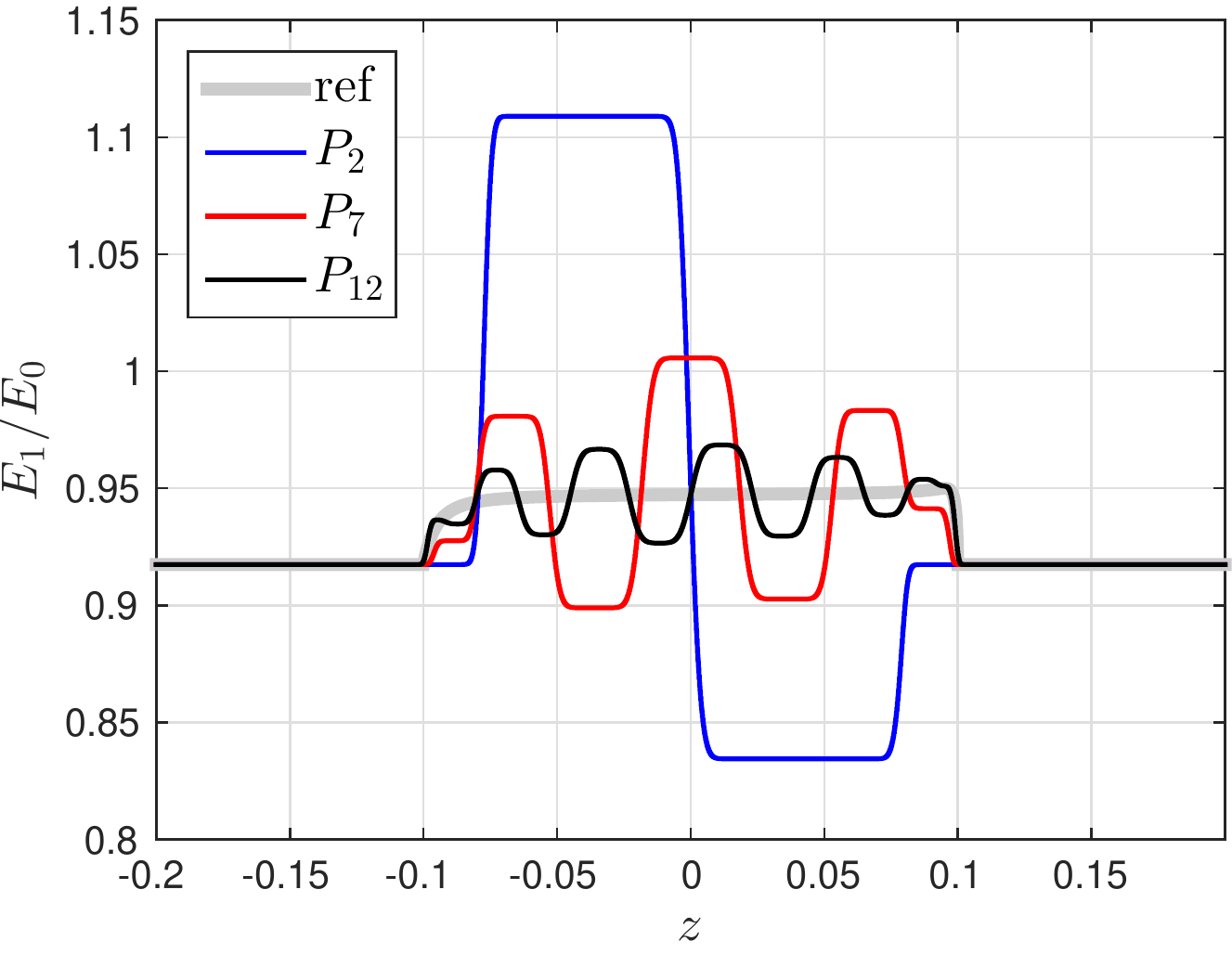}
  }
  \caption{\label{fig:Hyperbolicitytest_ex2_hmpn}
  Profiles of $E_0$ and $\frac{E_1}{E_0}$ for the \HMPN model \add{and the \PN model} 
  at the end time $\ctend=0.1$ for the
  Riemann problem.}
\end{figure}

We first use the Riemann problem in \eqref{eq:RiemannExample} to test the hyperbolicity.
The computational domain is set as $[-0.5,0.5]$ and the number of cells is $N_{\text{cell}}=10000$.
\Cref{fig:Hyperbolicitytest_ex2} presents the profiles of $E_0$ and $\dfrac{E_1}{E_0}$ for the \MPN
model, the \HMPN model, and the analytical solution (reference solution). The end time is determined
by the \MPN model when it blows up. Due to the loss of hyperbolicity, the \MPN model blows up in a
short time. The \HMPN does not suffer such an issue and gives reliable solutions thanks to the
hyperbolic regularization.

We also present the results of the \HMPN model \add{and the \PN model} at $\ctend=0.1$ for different $N$ in
\Cref{fig:Hyperbolicitytest_ex2_hmpn}. Clearly, as $N$ increases, the profiles of $E_0$ and
$\dfrac{E_1}{E_0}$ approach to the analytical solutions, \add{and the \HMPN model gets a better
approximation than the \PN model.} 

\paragraph{Continuous initial value}
We consider the problem with a continuous initial value as
\begin{equation}
  I = \begin{cases}
    6I_0,& x\leq -\frac{1}{10},\\
    (10-10x)I_0,& -\frac{1}{10} < x \leq \frac{1}{10},\\
    4I_0,& x>\frac{1}{10}.
  \end{cases}
\end{equation} 
where $I_0$ is given by 
\begin{equation}
  I_0(\mu) = \dfrac{1}{2}ac\left(\frac{1}{10}\delta(\mu) + \frac{9}{10}\delta(\mu-1)\right)
  ,\quad -1\leq \mu\leq 1.
\end{equation}


\begin{figure}[htb]
  \centering 
  \subfloat[$E_0$ at $\ctend=0.07$ with $N=4$]{
      \includegraphics[width=0.33\textwidth,height=0.16\textheight]{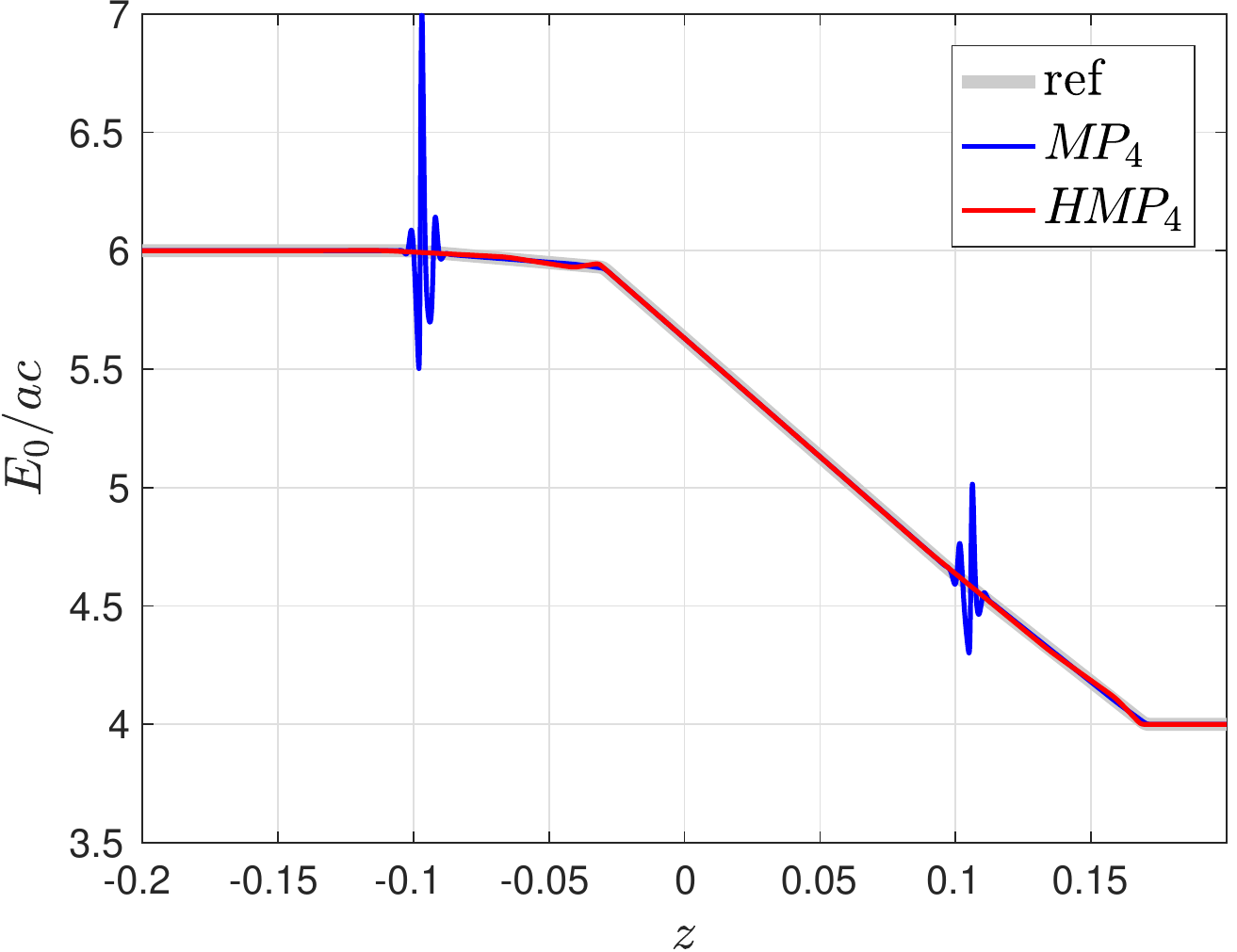}
  }
  \subfloat[$E_0$ at $\ctend=0.08$ with $N=6$]{
      \includegraphics[width=0.33\textwidth,height=0.16\textheight]{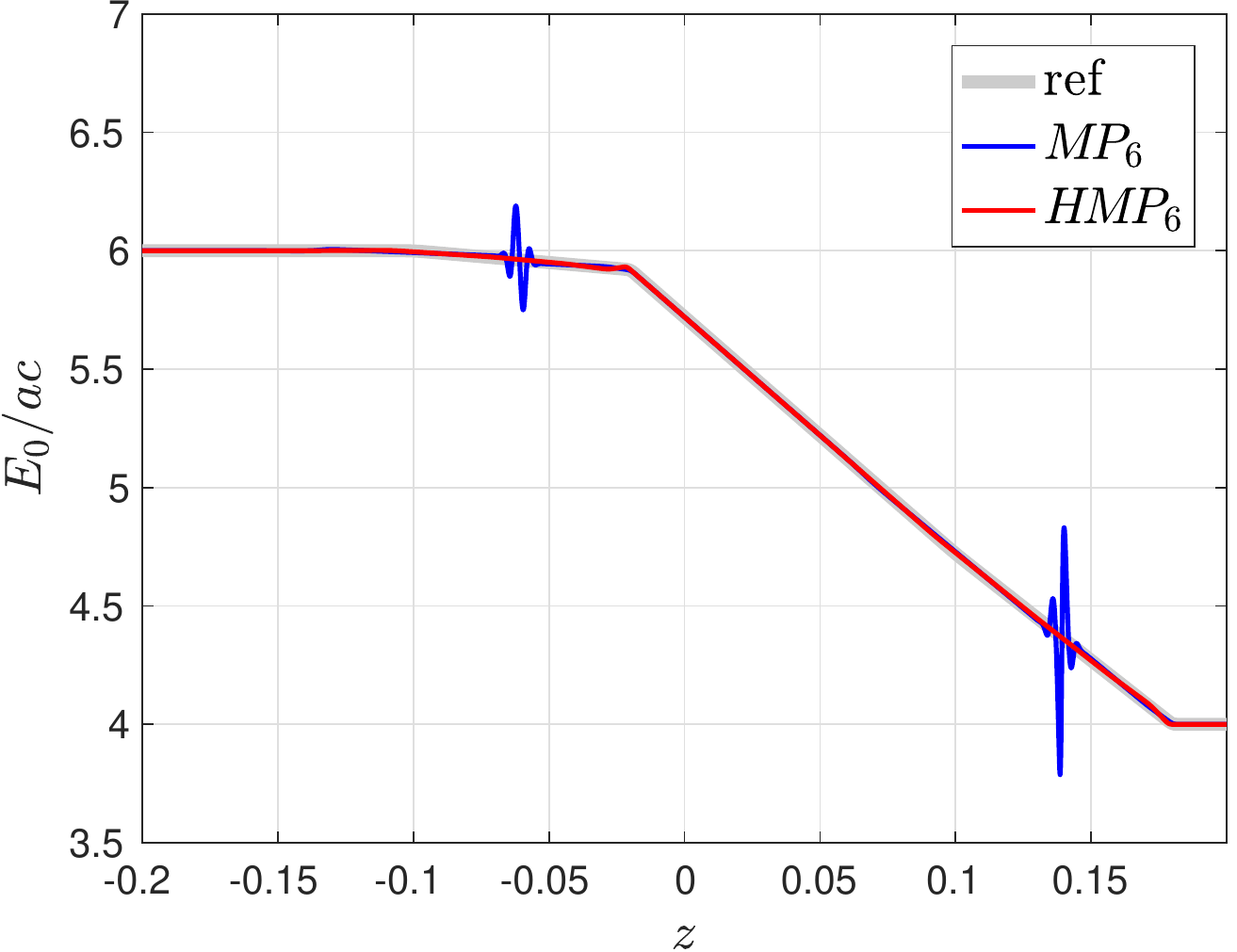}
  }
  \subfloat[$E_0$ at $\ctend=0.13$ with $N=8$]{
      \includegraphics[width=0.33\textwidth,height=0.16\textheight]{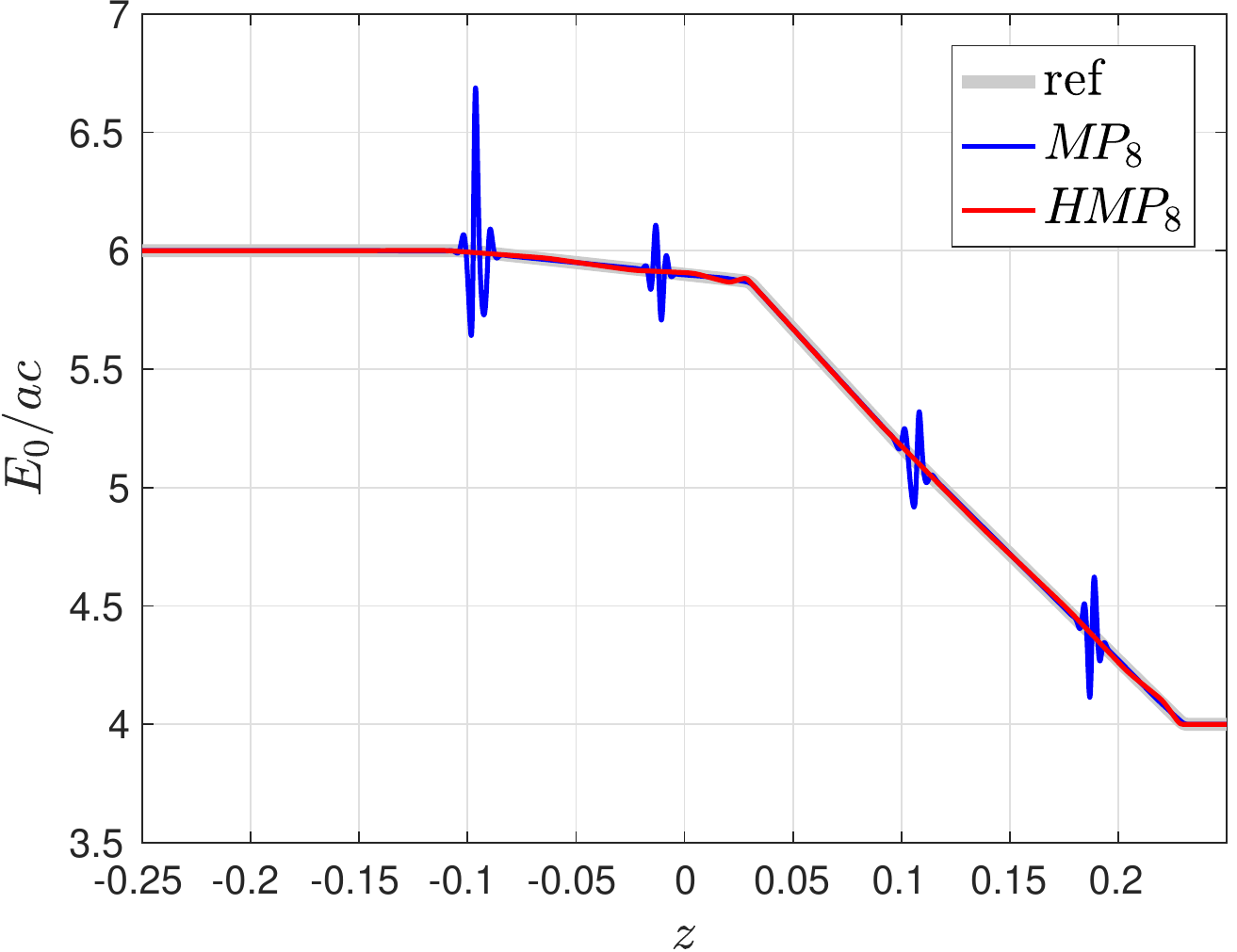}
  } \\
  \subfloat[$\frac{E_1}{E_0}$ at $\ctend=0.07$ with $N=4$]{
      \includegraphics[width=0.33\textwidth,height=0.16\textheight]{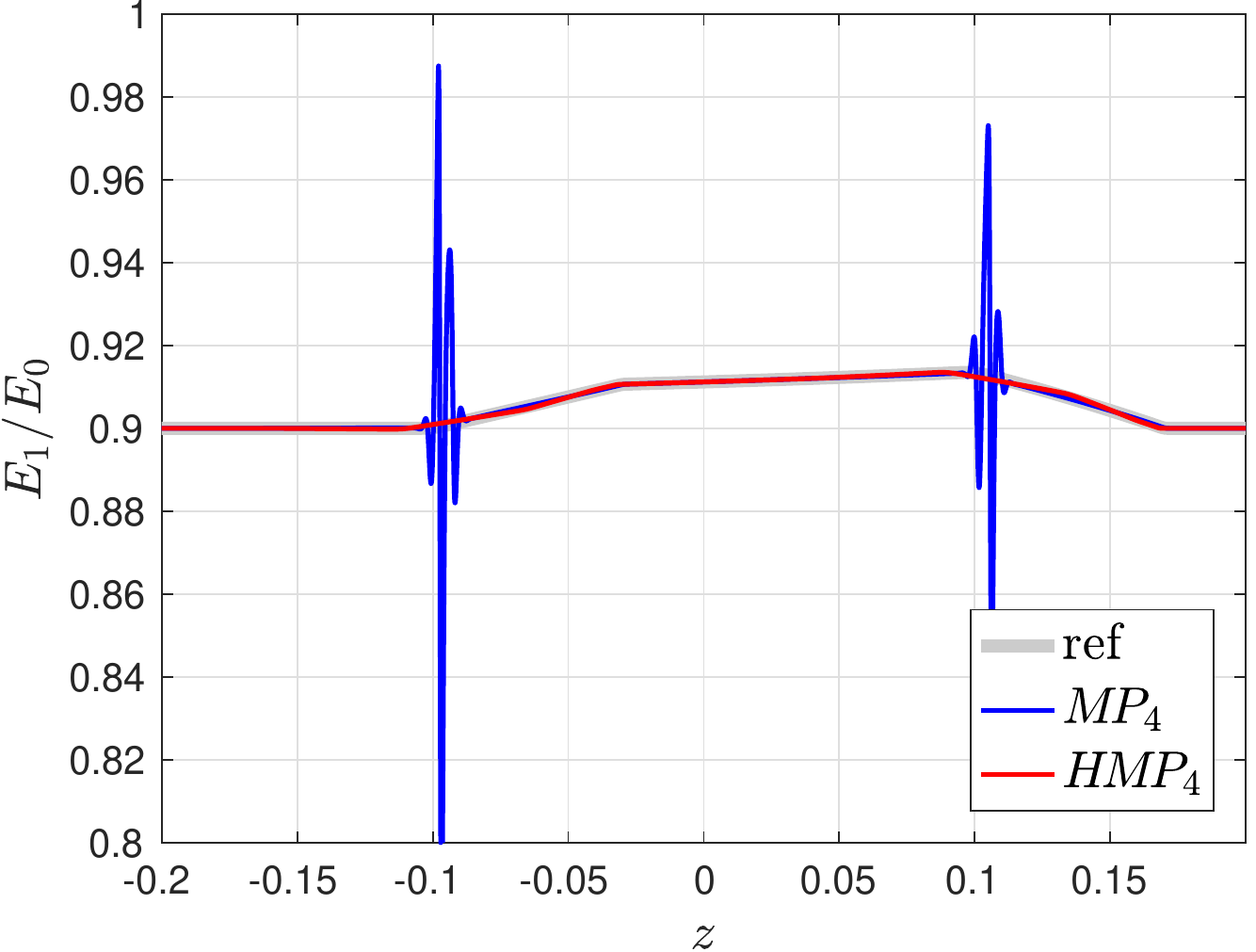}
  }
  \subfloat[$\frac{E_1}{E_0}$ at $\ctend=0.08$ with $N=6$]{
      \includegraphics[width=0.33\textwidth,height=0.16\textheight]{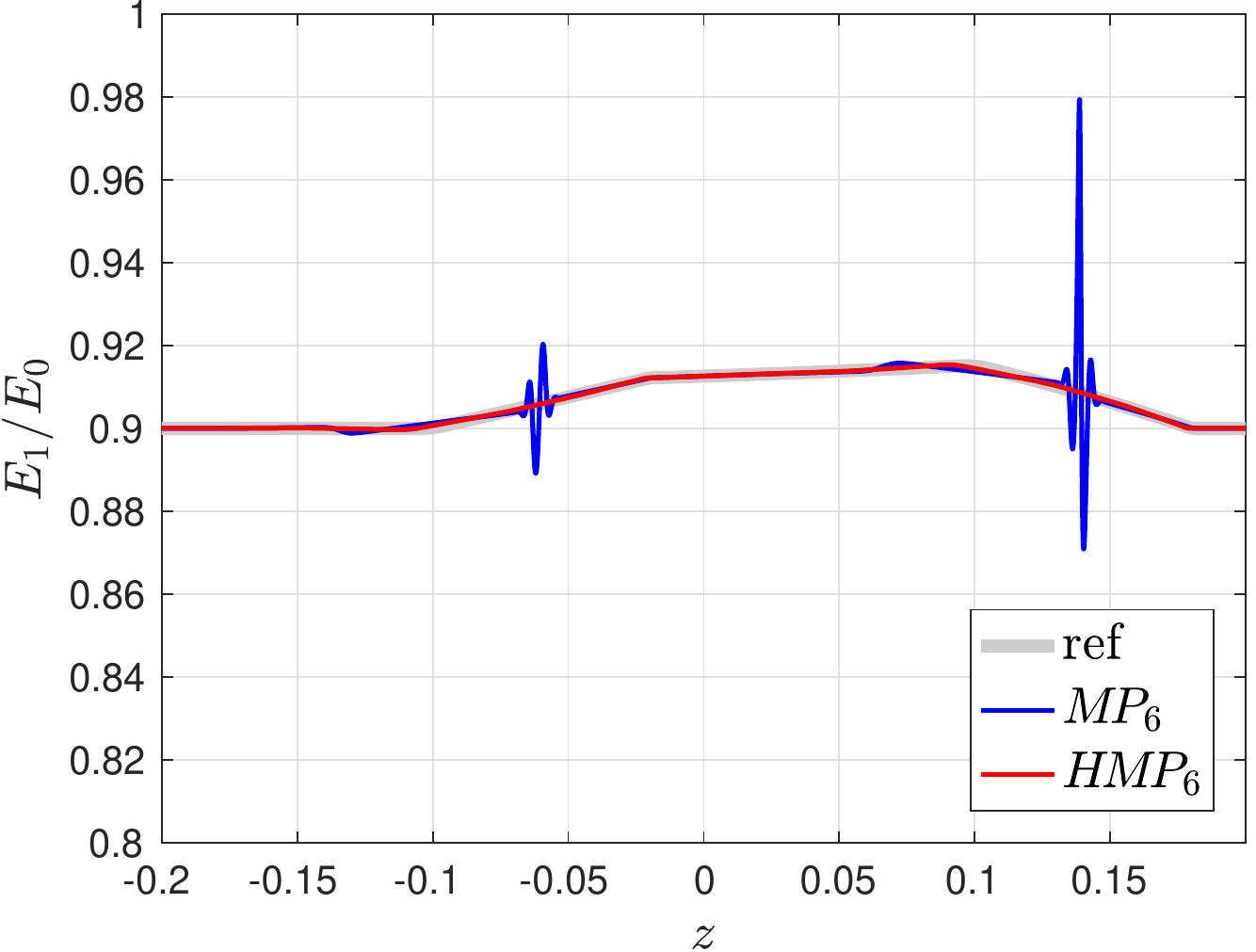}
  }
  \subfloat[$\frac{E_1}{E_0}$ at $\ctend=0.13$ with $N=8$]{
      \includegraphics[width=0.33\textwidth,height=0.16\textheight]{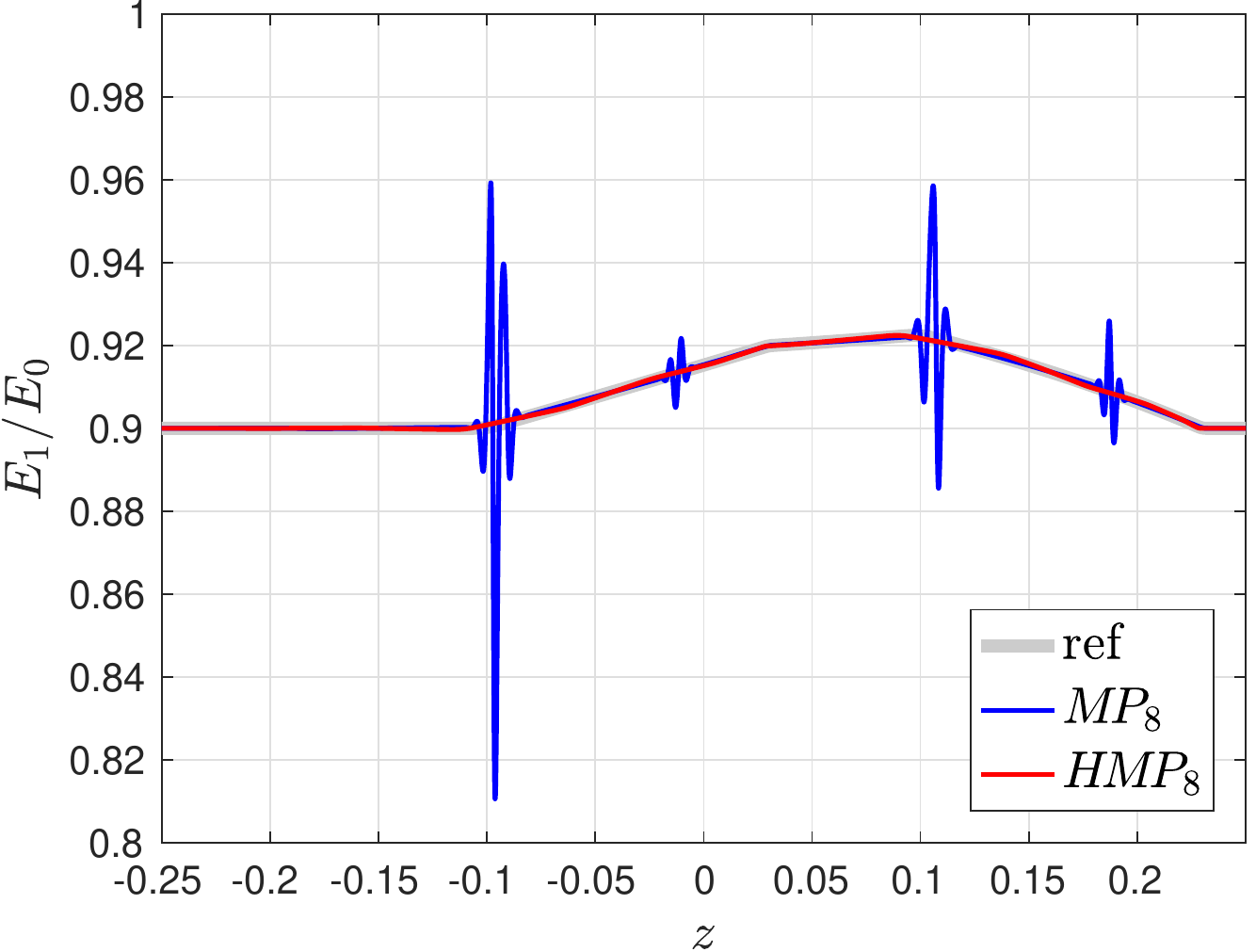}
  }
  \caption{ \label{fig:Hyperbolicitytest_ex1}
  Profiles of $E_0$ and $\frac{E_1}{E_0}$ for the \MPN model, the \HMPN model and the reference
  solution for the continuous initial value problem.}
\end{figure}

\begin{figure}[htb]
  \centering 
  \subfloat[$E_0$ of \HMPN]{
  \includegraphics[width=0.45\textwidth,height=0.22\textheight]{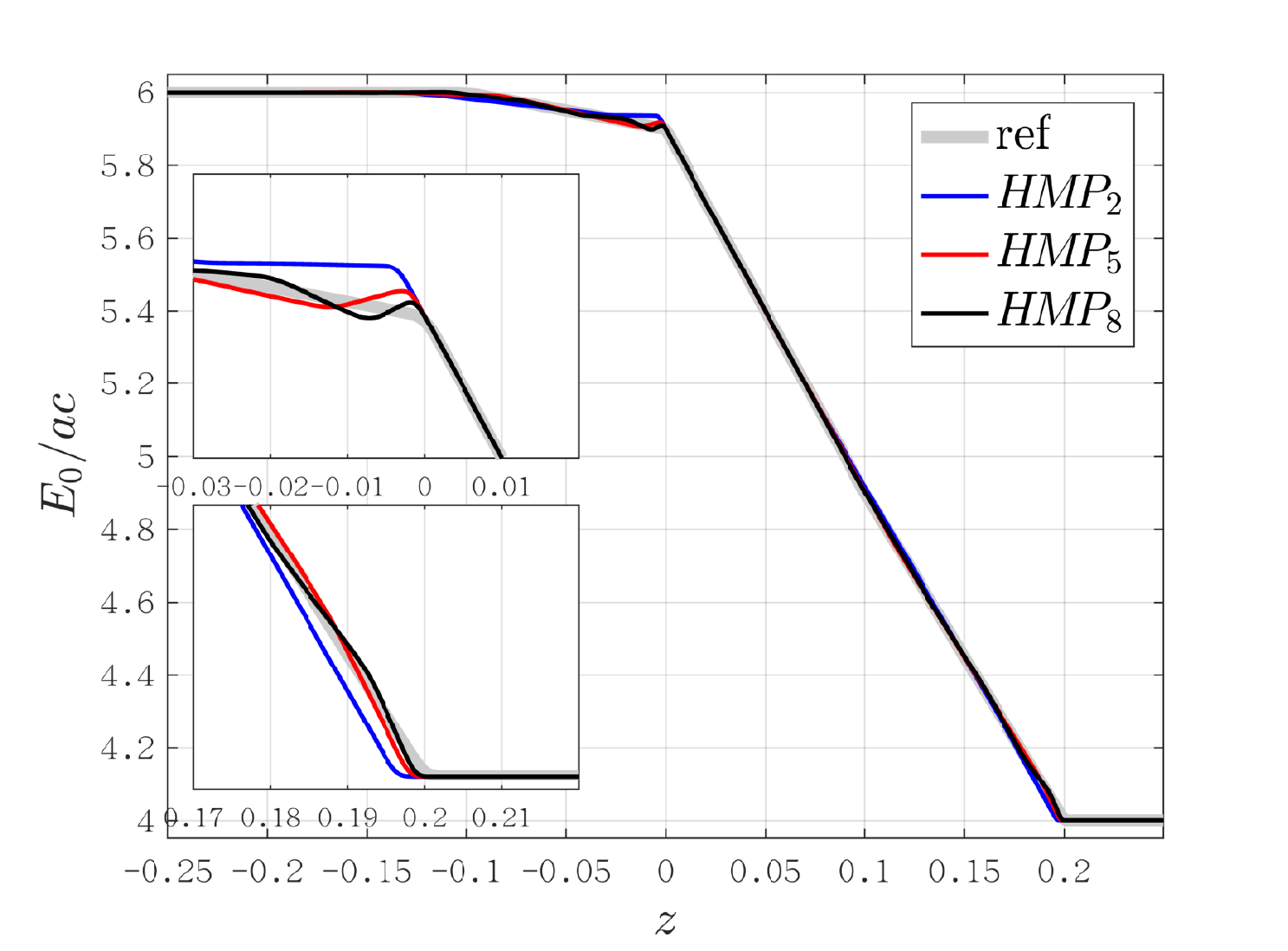}
  }
  \subfloat[$\frac{E_1}{E_0}$ of \HMPN]{
      \includegraphics[width=0.45\textwidth,height=0.22\textheight]{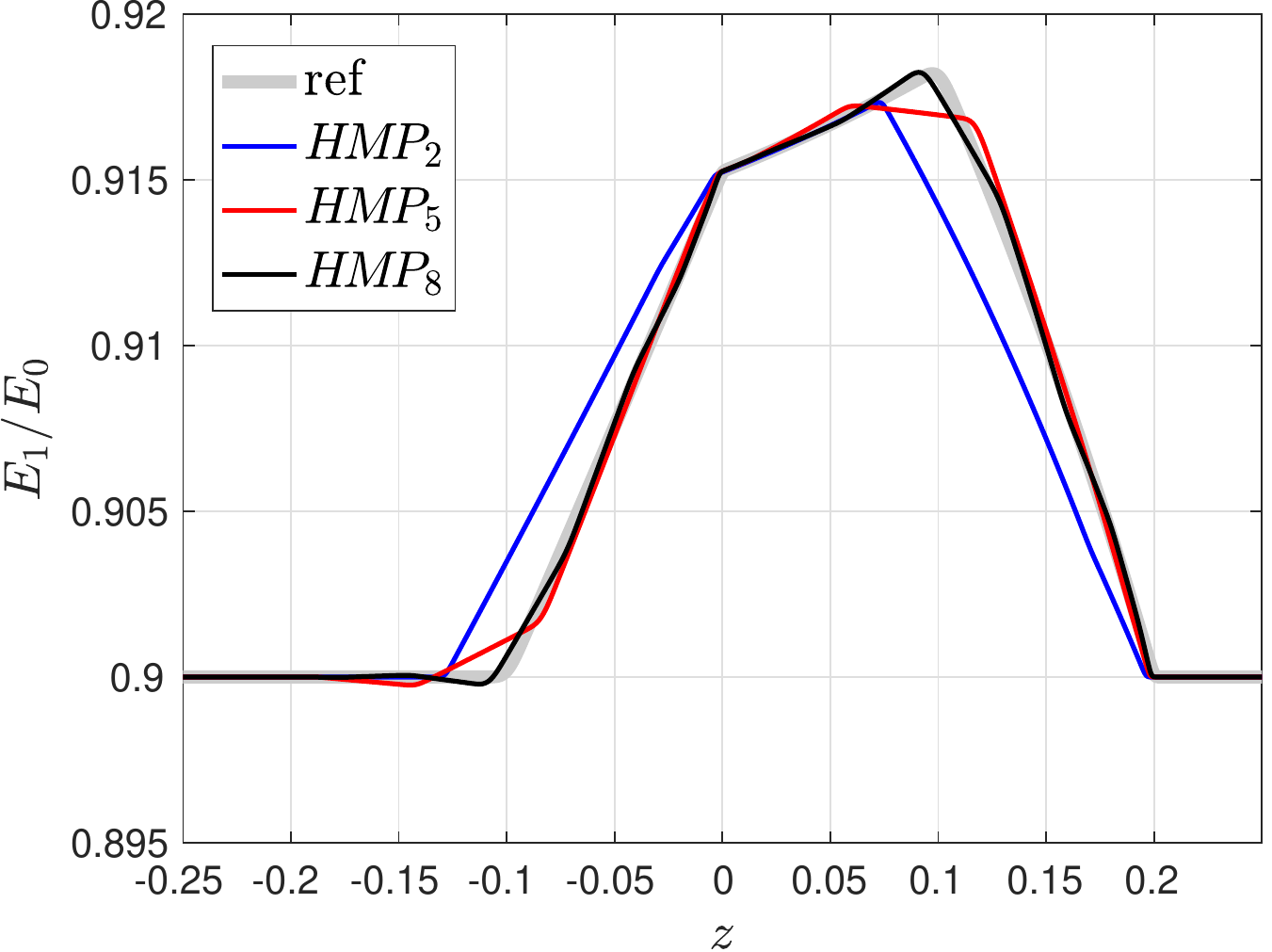}
  }\\
  \subfloat[$E_0$ of \PN]{
  \includegraphics[width=0.45\textwidth,height=0.22\textheight]{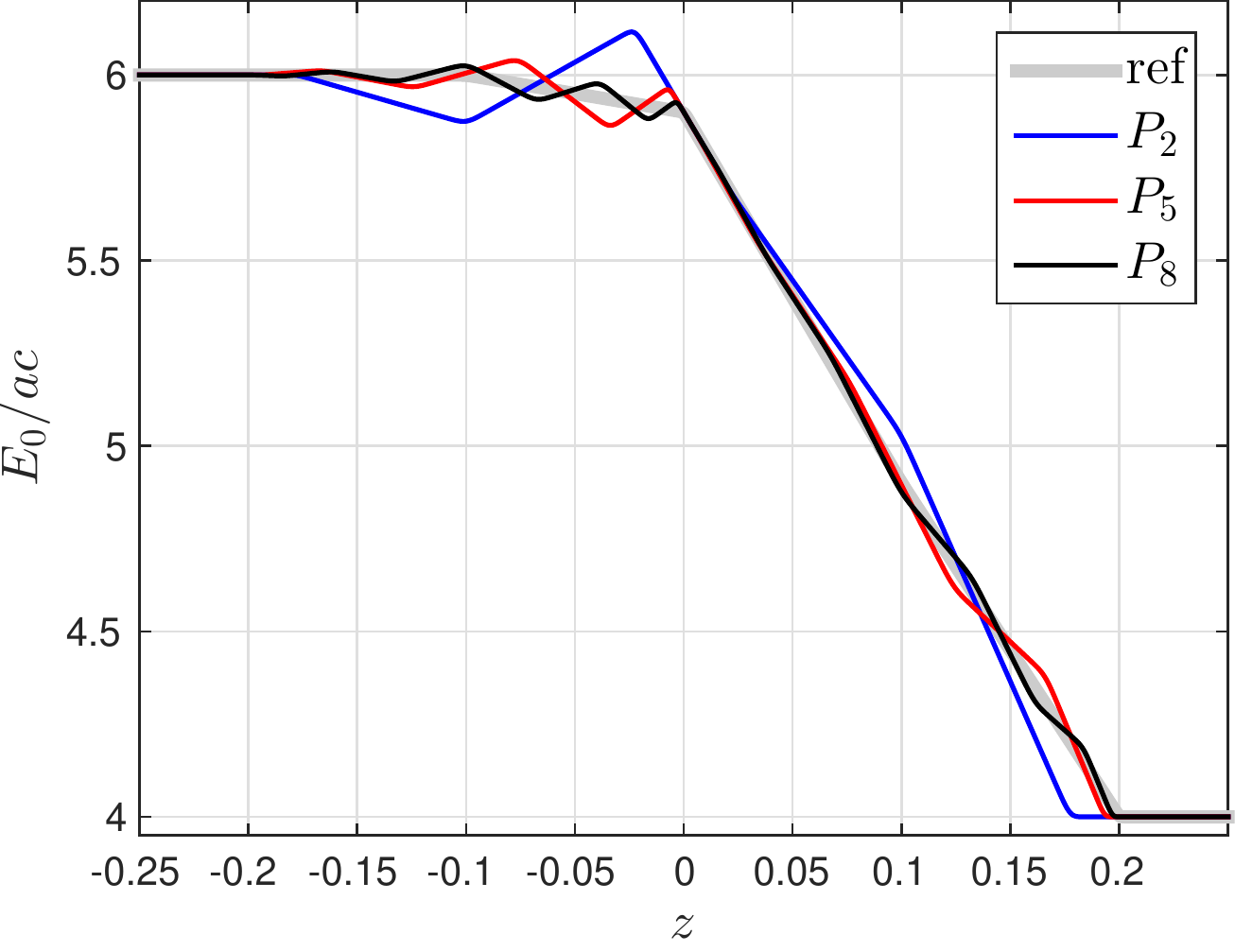}
  }
  \subfloat[$\frac{E_1}{E_0}$ of \PN]{
      \includegraphics[width=0.45\textwidth,height=0.22\textheight]{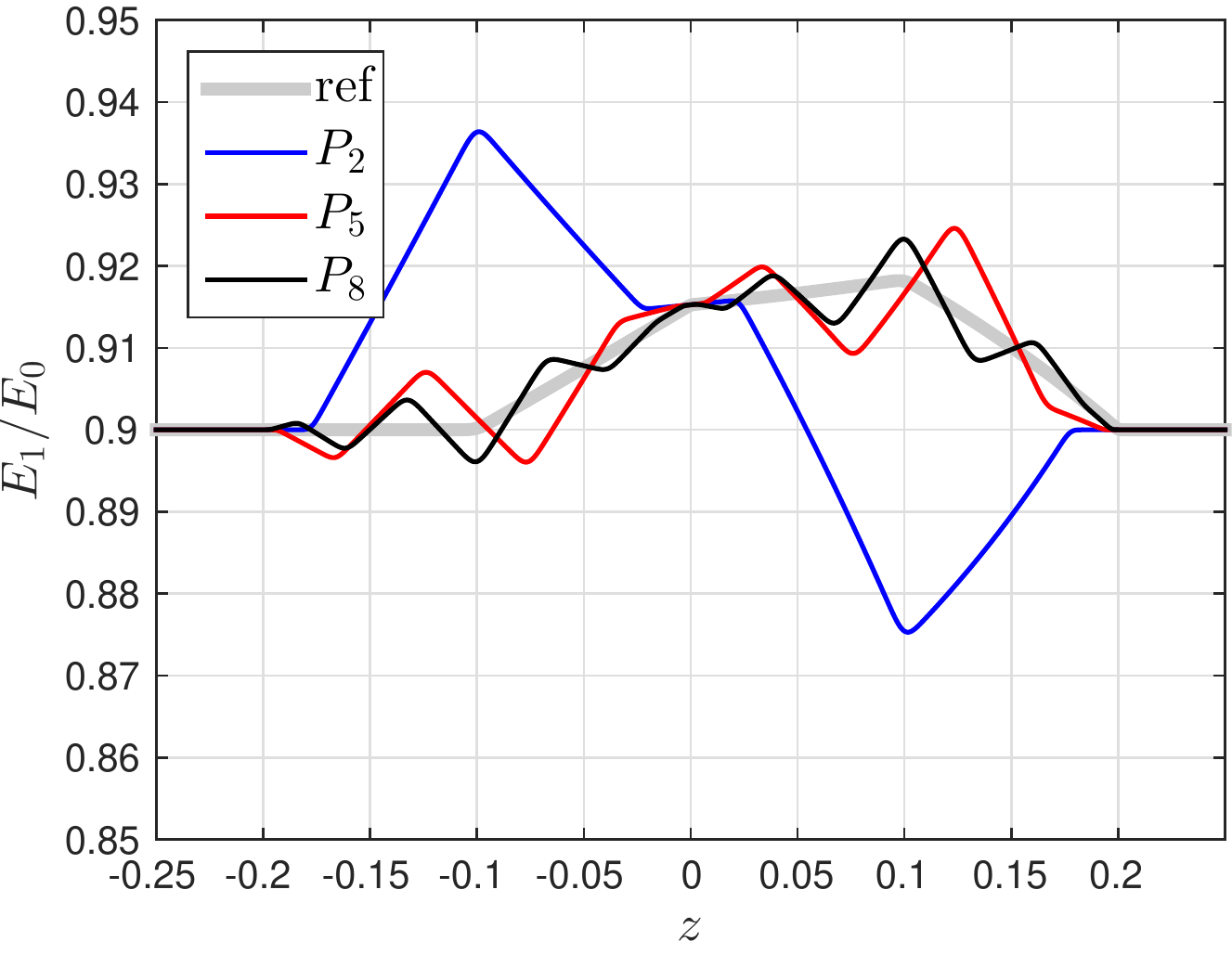}
  }
  \caption{\label{fig:Hyperbolicitytest_ex1_hmpn}
  Profiles of $E_0$ and $\frac{E_1}{E_0}$ for the \HMPN model \add{and the \PN model} 
  at the $\ctend=0.1$ for different
  $N$ for the continuous initial value problem.}
\end{figure}

The computational domain is also set as $[-0.5,0.5]$ and $N_{\cell}=10000$.
\Cref{fig:Hyperbolicitytest_ex1} presents the profiles of $E_0$ and $\frac{E_1}{E_0}$ of 
the \MPN model, the \HMPN model and the reference solution at specific end time. The reference
solution is calculated by the \PN model with $N=100$. The end time is determined by the \MPN model 
when it blows up. The results for the \HMPN model \add{and the \PN model} 
at $\ctend=0.1$ with different $N$ are presented
in \cref{fig:Hyperbolicitytest_ex1_hmpn}. 
One can see that the conclusions for the Riemann problem are also valid for the continuous initial
value, \add{and similarly, the approximation of the \HMPN model is more accurate than the \PN model.}

\subsubsection{Two-beam instability problem}
The two-beam instability problem is designed to test a closure's ability to handle multi-modal
distributions \cite{vikas2013radiation}. The maximum entropy model ($M_N$) yields unphysical shocks
\cite{brunner2001one,Hauck2011high} in this problem.
The computational domain is $[0,1]$;  
the absorption and scattering coefficients are $2$ and $0$, respectively; the
external source term $s=0$; and the coupling term with the background medium is neglected.
The inflow boundary condition are prescribed at the both boundaries, with
$I_{\text{inflow}}=\frac{1}{2}ac$, and the initial state is set as $I\big|_{t=0} = 10^{-8}ac$. 
We use $N_{\text{cell}}=10000$ cells to simulate this problem until the solution reaches the
steady state. The results of \add{the \PN model}, the \MPN model and the \HMPN model are presented in
\cref{fig:TwoBeams}. 

\begin{figure}[htb]
  \centering 
  \subfloat[$N=2$]{
      \includegraphics[width=0.33\textwidth,height=0.16\textheight]{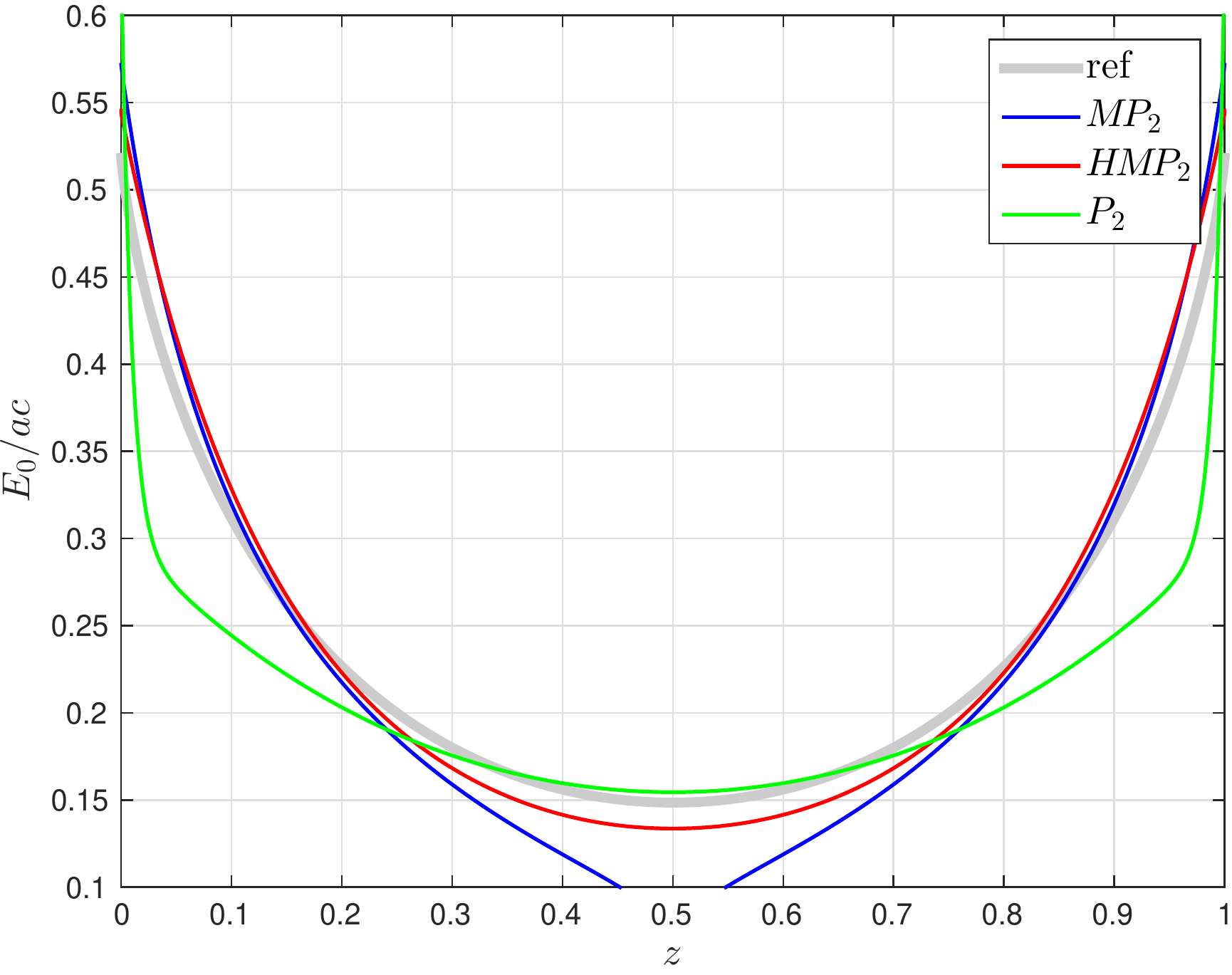}
  }
  \subfloat[$N=4$]{
      \includegraphics[width=0.33\textwidth,height=0.16\textheight]{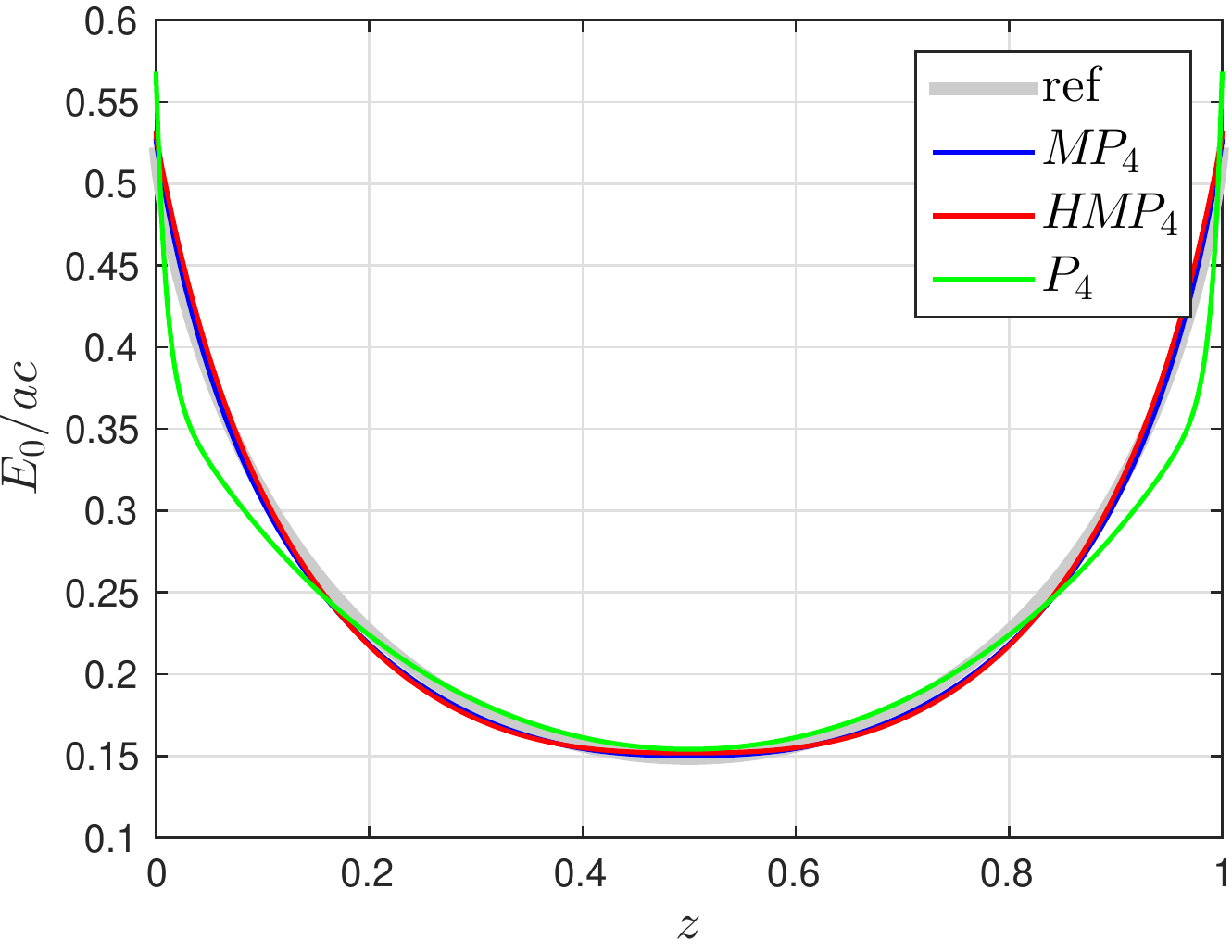}
  }
  \subfloat[$N=6$]{
      \includegraphics[width=0.33\textwidth,height=0.16\textheight]{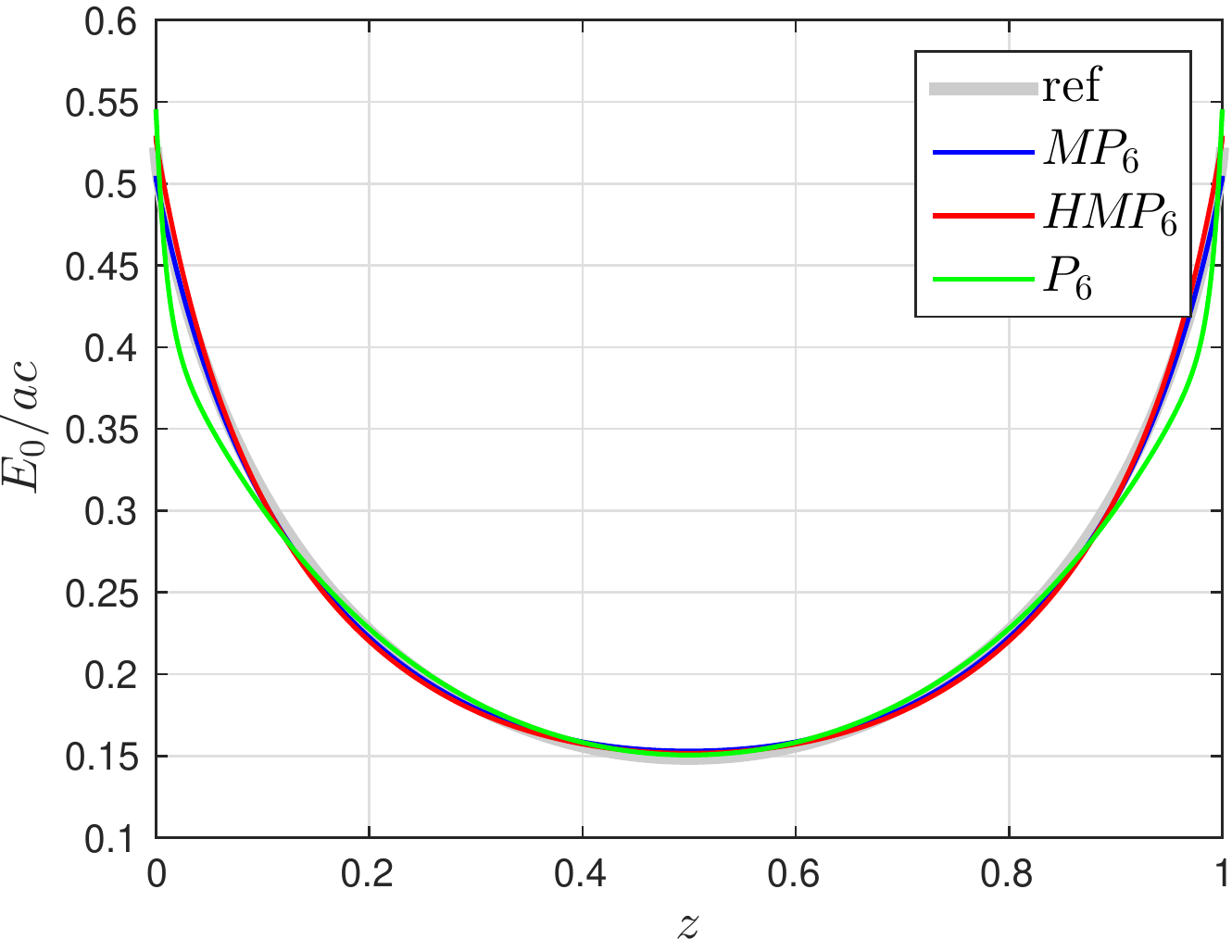}
  }
  \caption{\label{fig:TwoBeams}
  Profiles of $E_0$ for \add{the \PN model,} the \MPN model and the \HMPN model 
  for the two-beam instability problem.}
\end{figure}

For the case $N=2$, the \MPN model seems to give a result with unphysical shock while 
the profile of $E_0$ of the \HMPN model is smooth. When $N$ gets larger, the results of both models 
are close to each other and approach to the reference solution.
It is worth to point out that the characteristic speed of the \MPN model is greater than $1$,
so in the numerical simulation, the time step is smaller, thus its simulation is slower.



\subsubsection{Gaussian source problem}
This example simulates particles with an initial specific intensity that is a Gaussian distribution
in space \cite{Frank2012Perturbed, MPN}: 
\begin{equation}
  I_0(z,\mu) = \dfrac{ac}{\sqrt{2\pi\theta}} e^{-\frac{z^2}{2\theta}},\quad
  \theta=\dfrac{1}{100},\quad z\in(-L,L).
\end{equation}
Here $L$ is adopted as $\ctend+1$ such that no energy reaches the boundaries and we can set
vacuum boundary conditions at both boundaries. The external source term is zero, i.e., $s=0$, and
the absorption and scattering coefficients are $\sigma_a=0$ and $\sigma_s=1$, respectively, so the
material coupling term vanishes. 

\begin{figure}[htb]
    \centering 
    \subfloat[$N=2$]{
        \includegraphics[width=0.33\textwidth,height=0.16\textheight]{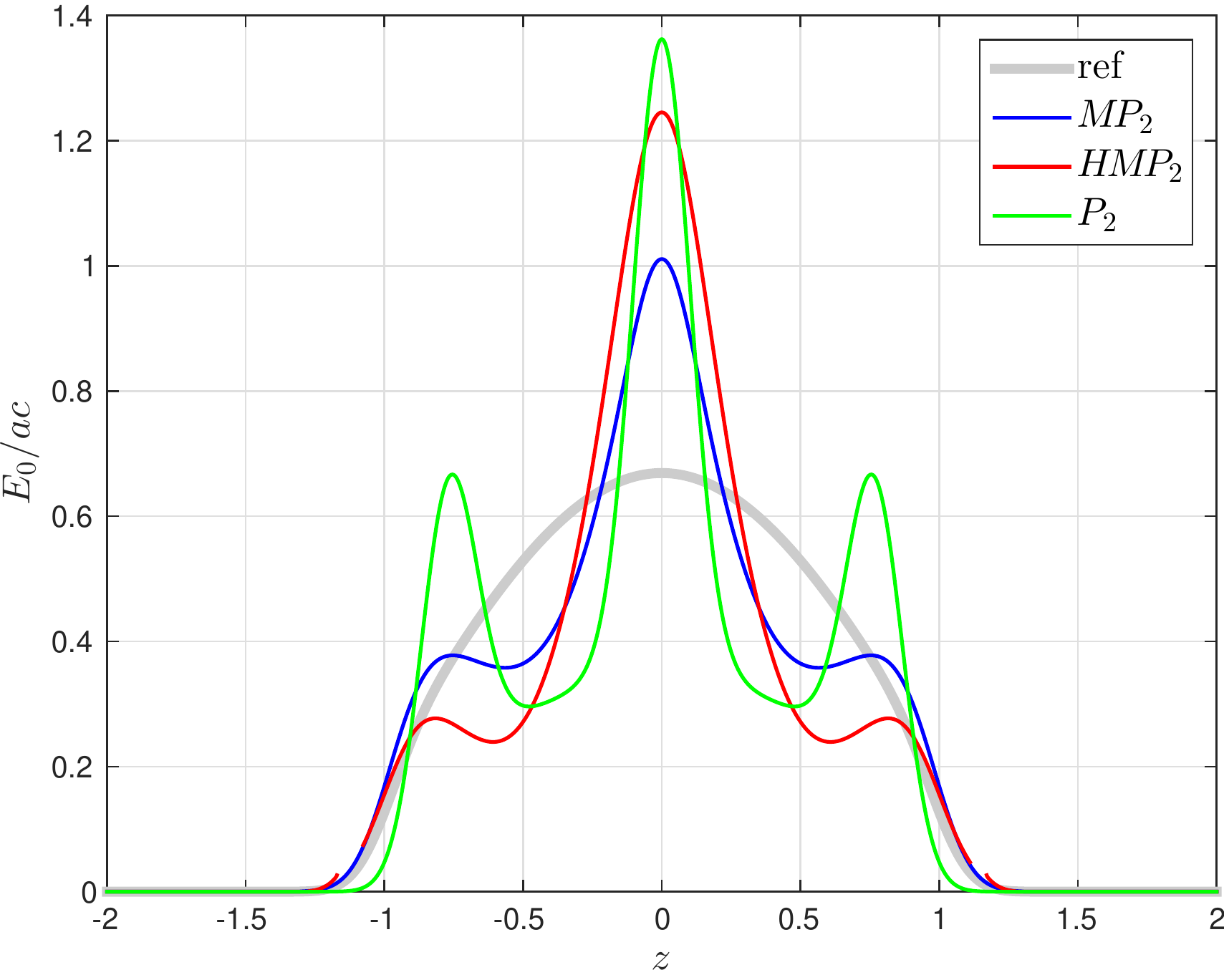}
    }
    \subfloat[$N=6$]{
        \includegraphics[width=0.33\textwidth,height=0.16\textheight]{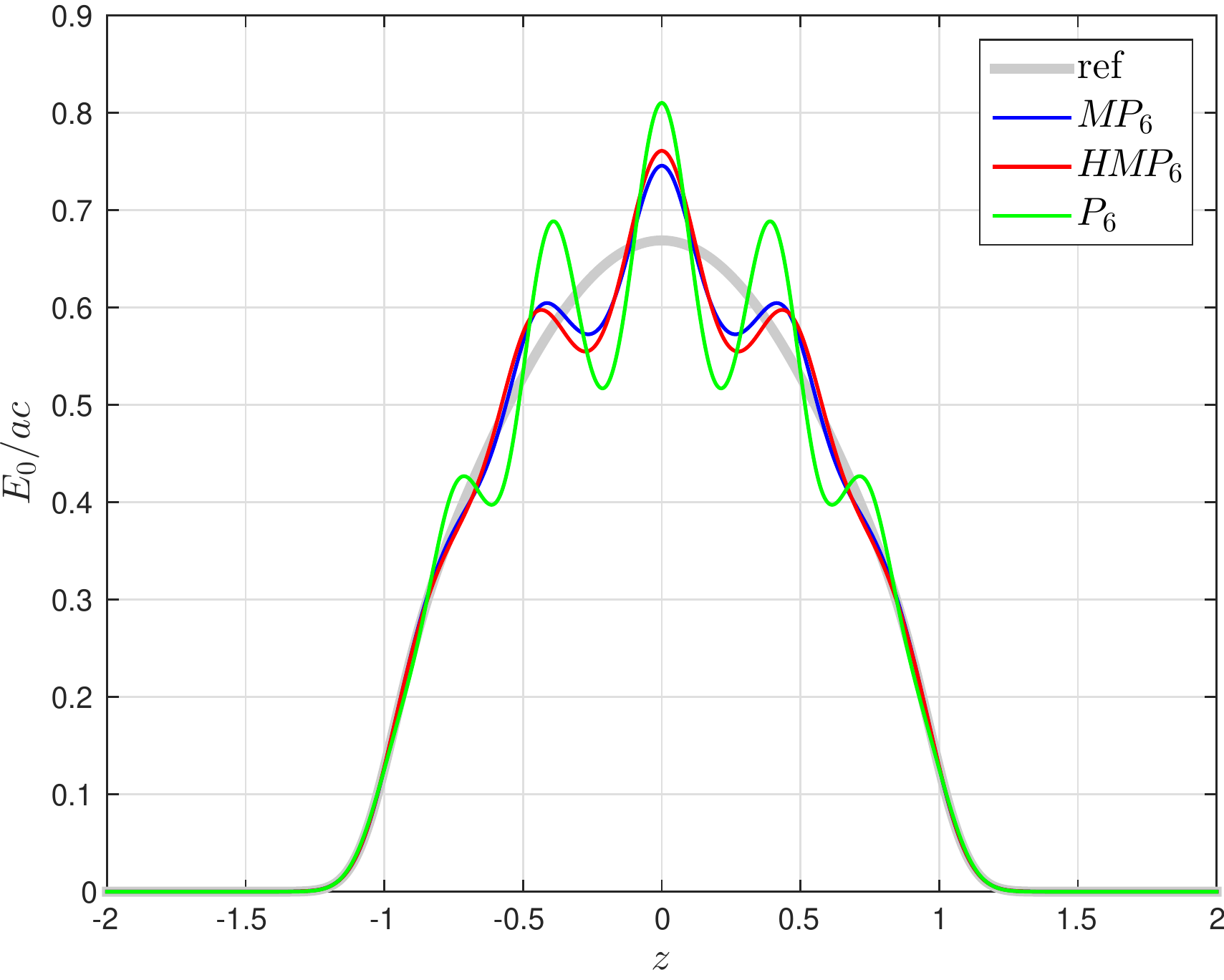}
    }
    \subfloat[$N=10$]{
        \includegraphics[width=0.33\textwidth,height=0.16\textheight]{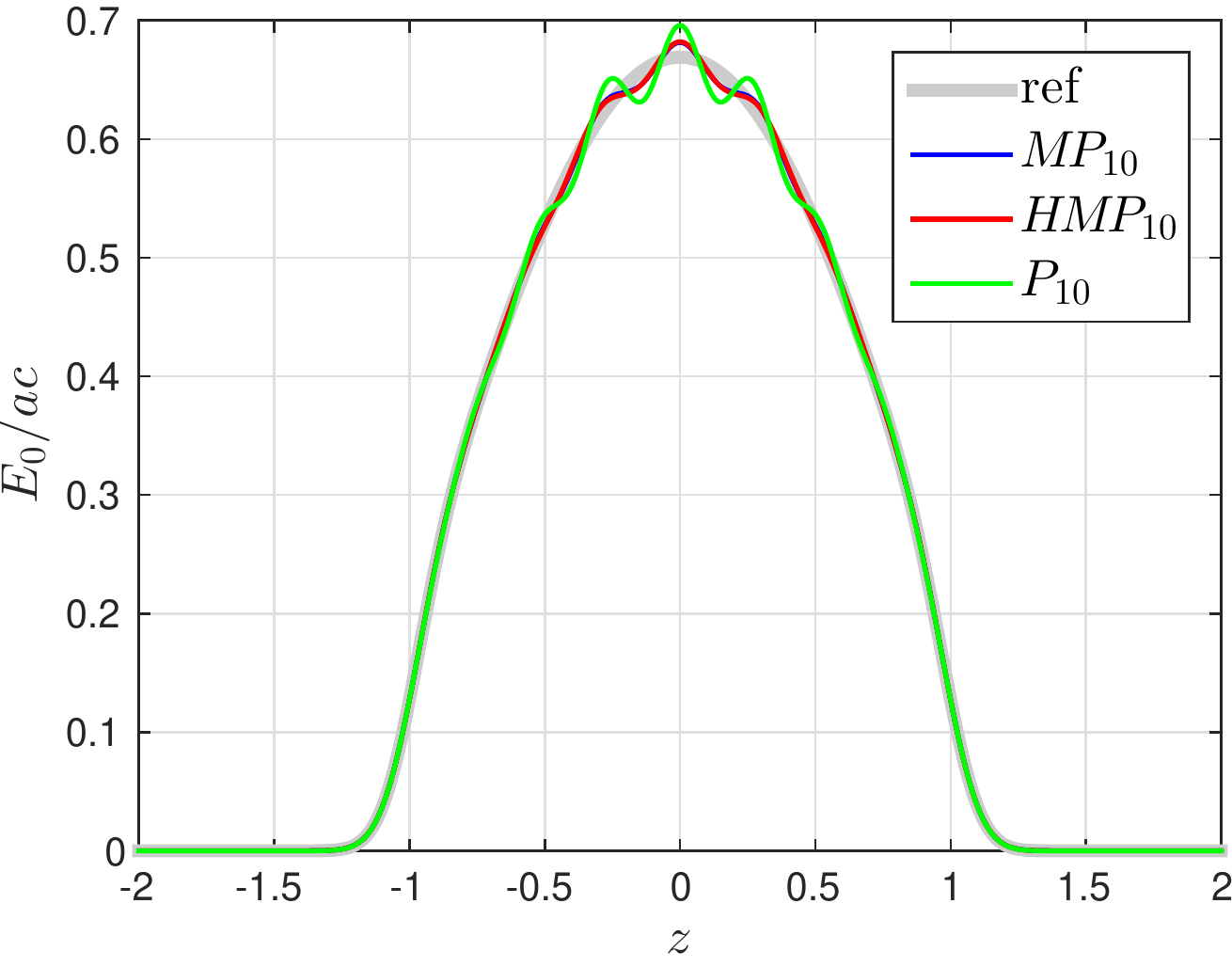}
    }
    \caption{\label{fig:GaussianSource}
    Profiles of $E_0$ for \add{the \PN model,} the \MPN model and the \HMPN model for the Gaussian source problem.}
\end{figure}

We simulate this problem until $\ctend=1$ using $N_{\cell}=10000$ cells.
\Cref{fig:GaussianSource} presents the results of \add{the \PN model,} the \MPN model, and the \HMPN model with $N=2$, 6 and
10, and the reference solution, which is calculated by the \PN model with $N=100$.

For such problem, both the \MPN model and the \HMPN model work well and their results are very
close. As the number of moments $N$ increases, the results of both models approach to the reference
solution. Particularly, if $N$ is large enough, for instance $N=10$, the results of the two
models almost coincide. For the problems where the \MPN model works, the \HMPN model also works well.

\subsubsection{Su-Olson problem}
The Su-Olson problem \cite{Olson2000Diffusion} is a non-equilibrium radiative transfer problem with
a material coupling term. 
The computation domain is $[0,30]$, and the absorption and scattering coefficients are $\sigma_a=1$
and $\sigma_s=0$, respectively. The external source term $s(z)$ is given by 
\begin{equation}
  s(z) = \begin{cases}
    ac, & 0\leq z\leq \frac{1}{2},\\
    0,  & \text{otherwise}.
  \end{cases}
\end{equation}
In this problem, the material coupling term plays an important role and the relationship between the
temperature $T$ and the internal energy $e$ is 
\begin{equation}\label{eq:eT}
  e(T) = aT^4.
\end{equation}

\begin{figure}[htb]
  \centering 
  \subfloat[$N=2$]{
      \includegraphics[width=0.33\textwidth,height=0.16\textheight]{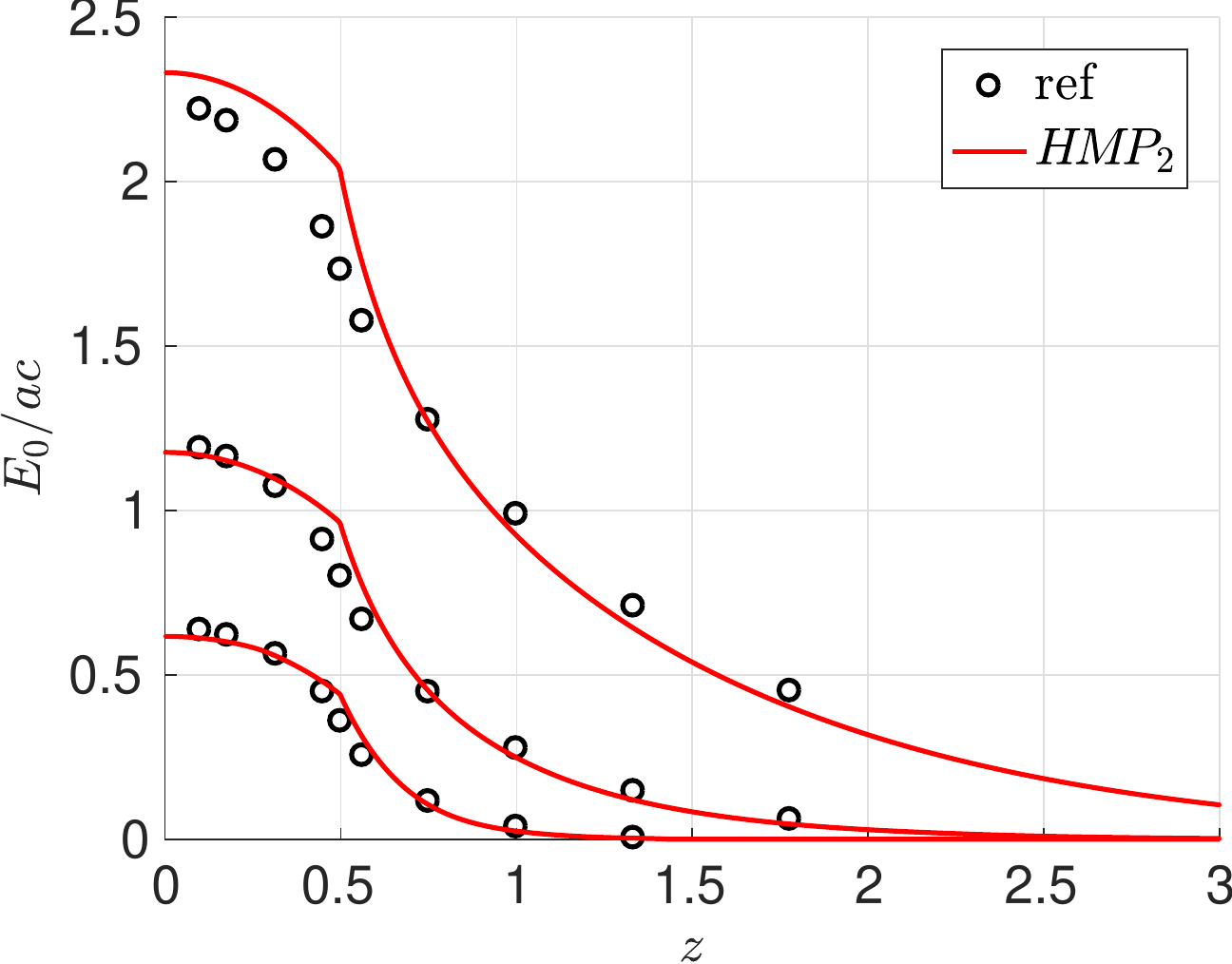}
  }
  \subfloat[$N=4$]{
      \includegraphics[width=0.33\textwidth,height=0.16\textheight]{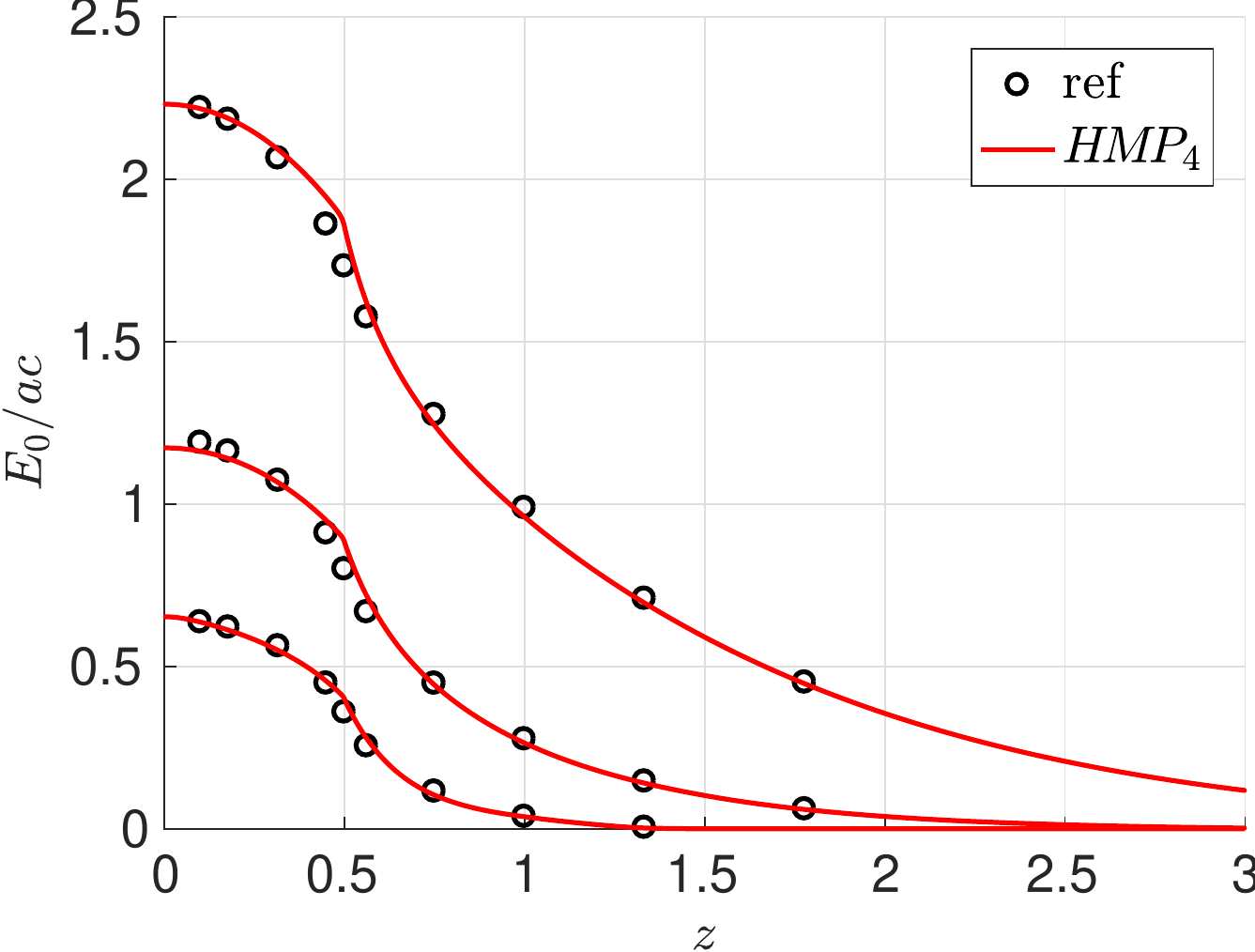}
  }
  \subfloat[$N=6$]{
      \includegraphics[width=0.33\textwidth,height=0.16\textheight]{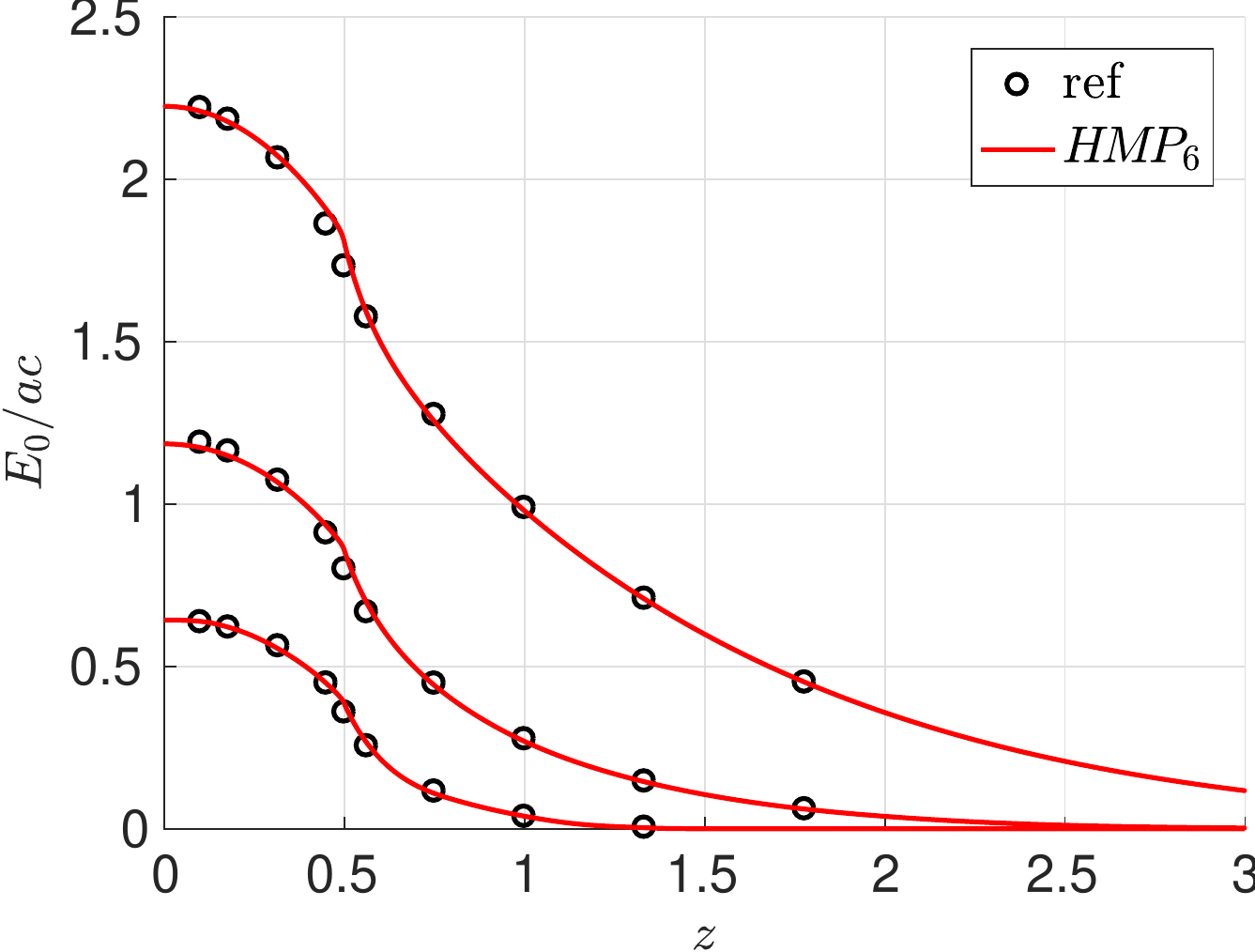}
  }
  \caption{\label{fig:SuOlson}
  Profiles of $E_0$ for the \HMPN model for the Su-Olson problem.
  The three groups of lines correspond to the end time $\ctend$ = 1, 3.16, and 10 (from bottom to
  top), respectively.}
\end{figure}

The reflective boundary condition is prescribed at the left boundary while the vacuum boundary
condition is prescribed at the right boundary. 
We use $N_{\text{cell}}=60000$ cells to simulate this problem till $\ctend=1$, 3.16, and 10.
The results of the \HMPN model and the semi-analytic solution taken form
\cite{su1997analytical} are presented in \cref{fig:SuOlson}.
The \HMPN model has a good agreement with the reference solution even when $N=2$ and gives a 
better approximation of the reference solution as $N$ increases.

\subsubsection{Anti-diffusive radiation flow}
The anti-diffusive flow \cite{mcclarren2010anti} is usually used to study the behaviour of radiative
shocks. The material consists of three parts and each part has its temperature. Precisely, 
the material temperature is given by 
\begin{equation}
  T^4 = 
  \begin{cases}
     a,  &z<0,\\
     1,  &0\leq z\leq z_0,\\
     b,  &z>z_0.
  \end{cases}
\end{equation}
In this problem, the parameters are set as $a=0.275$, $b=0.1$, $z_0=0.1$. The absorption and
scattering coefficients are $\sigma_a=1$ and $\sigma_s=0$, respectively, and the problem domain is
the whole space. The relationship between the temperature $T$ and the internal energy $e$ is
depicted by \eqref{eq:eT}. Then the analytical solution can be directly obtained as
\begin{itemize}
  \item When $0\leq z\leq z_0$,
    \begin{align}
      I(z,\mu)=\left\{
      \begin{aligned}
        & a e^{-z/\mu} + (1-e^{-z/\mu}),\quad & \mu>0,\\
        & b e^{-(z_0-z)/|\mu|} + (1-e^{-(z_0-z)/|\mu|}),\quad &\mu<0.
      \end{aligned}
      \right.
    \end{align}
  \item When $z<0$,
    \begin{align}
      I(z,\mu)=\left\{
      \begin{aligned}
        & a,\quad & \mu>0,\\
        & I(0,\mu) e^{z/|\mu|} +a (1-e^{z/|\mu|}),\quad &\mu<0.
      \end{aligned}
      \right.
    \end{align}
  \item When $z> z_0$,
    \begin{align}
      I(z,\mu)=\left\{
      \begin{aligned}
        & I(z_0,\mu) e^{-(z-z_0)/\mu} + b(1-e^{-(z-z_0)/\mu}),\quad & \mu>0,\\
        & b ,\quad &\mu<0.
      \end{aligned}
      \right.
    \end{align}

\end{itemize}
We simulate this problem with space step $h=1/200$ until steady state.
\Cref{fig:AntiDiffusive} presents the profiles of $E_0$ and $\frac{E_1}{E_0}$ of the \HMPN model and
the analytical solution.
Clearly, as $N$ increases, the solution of the \HMPN model convergents to reference solution very
fast, and when $N=8$, the \HMPN model is good enough to resolve the reference solution.

\begin{figure}[htb]
  \centering 
  \subfloat[$E_0$]{
      \includegraphics[width=0.45\textwidth,height=0.22\textheight]{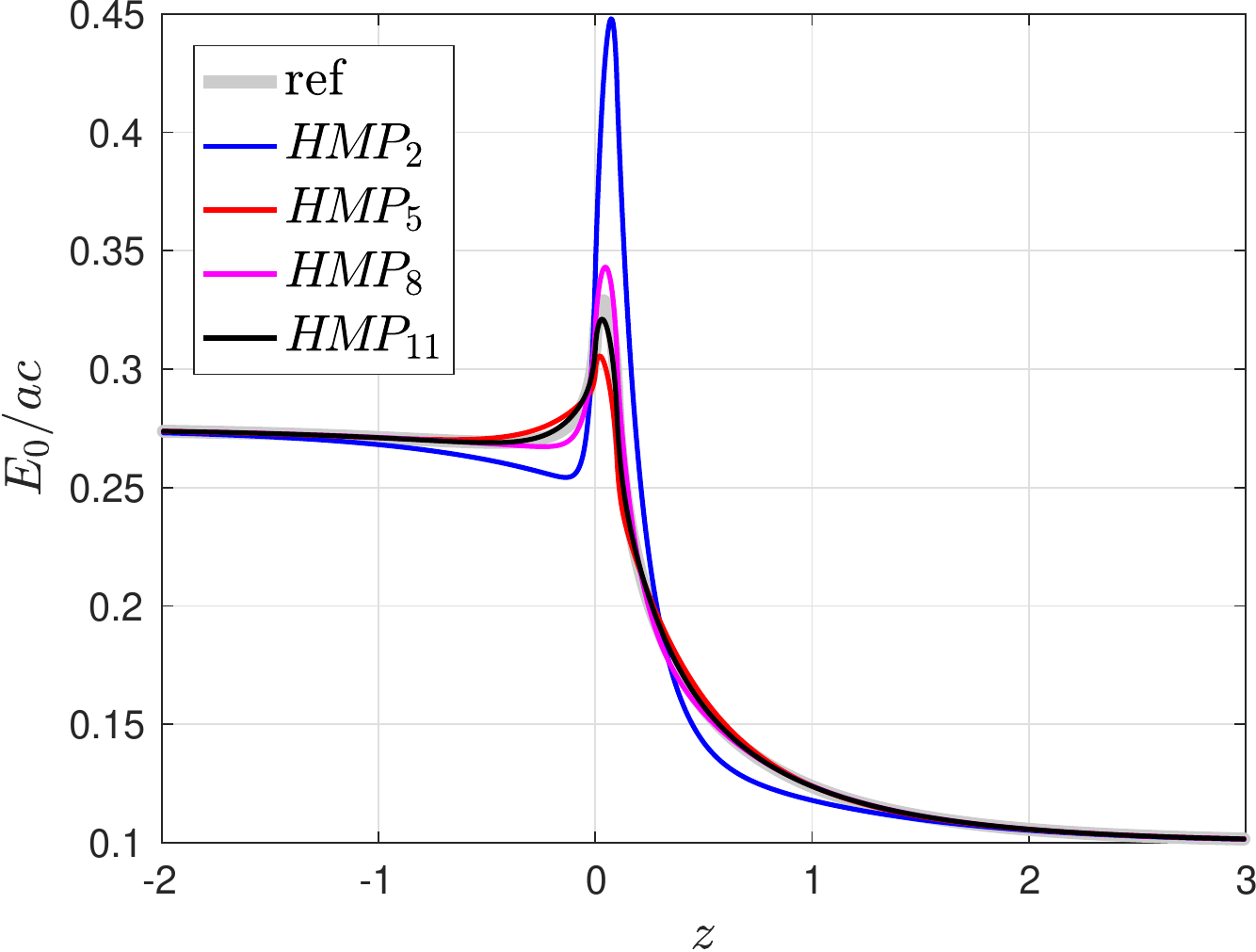}
  }
  \subfloat[$\dfrac{E_1}{E_0}$]{
      \includegraphics[width=0.45\textwidth,height=0.22\textheight]{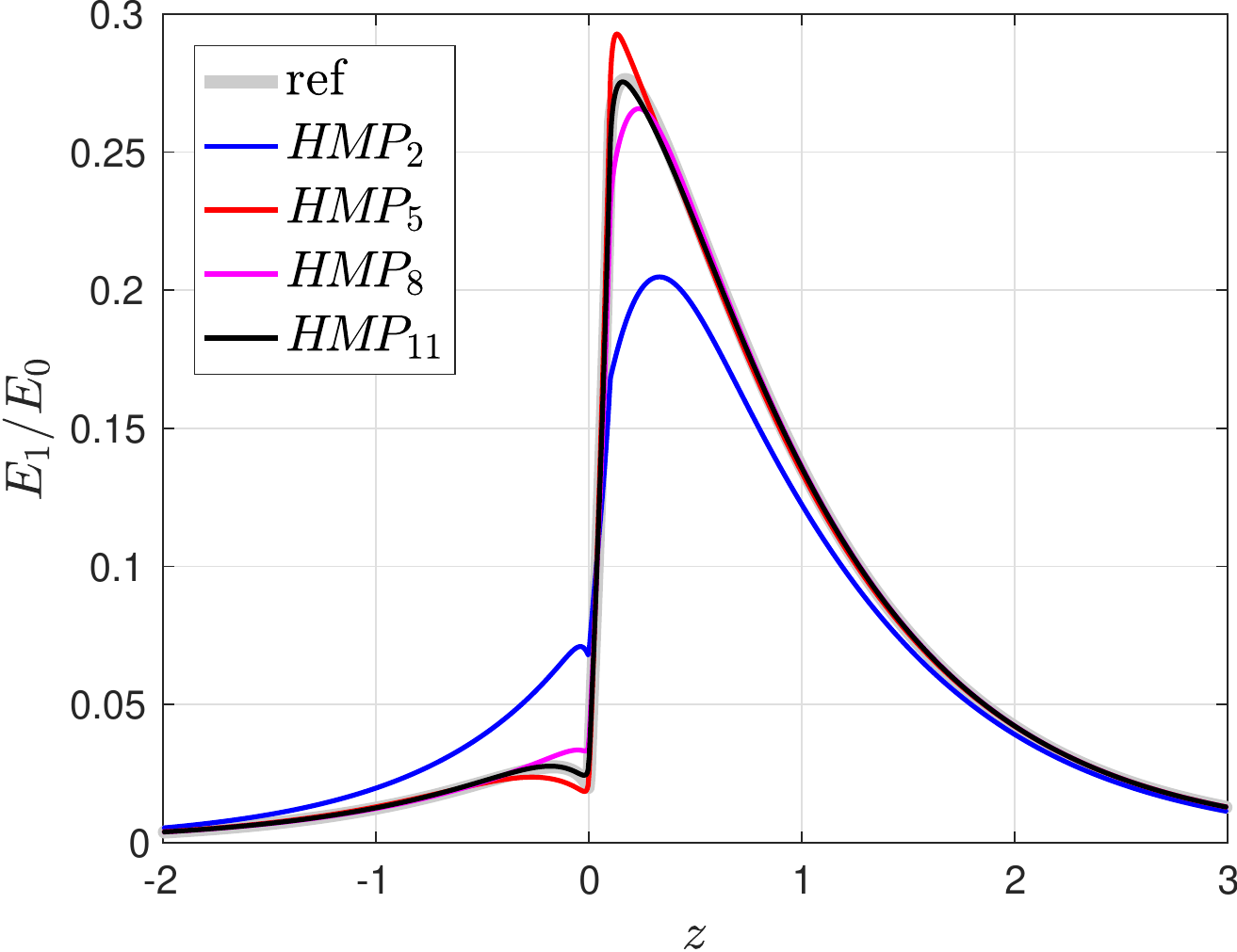}
  }
  \caption{\label{fig:AntiDiffusive}
  Profiles of $E_0$ and $\dfrac{E_1}{E_0}$ for the \HMPN model for the anti-diffusive radiation flow.
  }
\end{figure}

\section{Conclusion} \label{sec:conclusion}

We derived a new nonlinear model for RTE, which is a significant
progress than the \MPN model in \cite{MPN}. For the new model, not
only it is globally hyperbolic, but also some necessary physical
properties are preserved. Particularly, the regularization method in
this work was novel, and it extended the hyperbolic
regularization method in \cite{framework}.

The current work focuses on the globally hyperbolic moment system for the frequency-independent RTE
in slab geometry. Future work in this research area certainly contained: 1) an extension to the
three-dimensional case; 2) an extension to the frequency-dependent case; 3) the existence and
uniqueness of the solution of the new moment model.
Moreover, the regularization method in this work is worth further investigation. 
The novel regularization method generalizes the regularization method in \cite{Fan, Fan_new} and
takes more properties of the kinetic equation into account. It is also a future work to extend the
novel regularization method to a general framework, which reduces the kinetic equation to a globally
hyperbolic moment system by moment model reduction with maintaining physical properties of the
kinetic equation.

\delete{
In this work, we first pointed out that the \MPN model proposed in
\cite{MPN} loses its hyperbolicity when the specific intensity is far
away from the equilibrium for $N\geq3$ and might yield an unphysical
characteristic speed (faster than the speed of light).  To fix these
defects, we introduced the \cref{c1,c2,c3,c4} and explored the
regularization methods for the \MPN model. The existing hyperbolic
regularization method could not be applied to the \MPN model due to
its violation of the \cref{c3,c4}. Further investigation on the weight
function of the \MPN model motivated us to propose a novel
regularization method, which obeyed all the criteria. Particularly,
the new moment system was globally hyperbolic, and its characteristic
speeds lied in $[-c,c]$. The characteristic structure of the new
moment system was also studied.  

Due to the regularization, the convection part of the new moment
system might fail to write into a conservative form. We introduced the
DLM theory \cite{Maso} for the non-conservative part and constructed a
numerical scheme in the framework of the finite volume method.  Since
the characteristic speeds of the new moment system lie in $[-c,c]$,
its CFL condition is a bit looser than that of the \MPN model.  Many
numerical examples were tested to demonstrate the numerical efficiency
of the new moment model.  For the case the \MPN model failed due to
loss of hyperbolicity, the new moment model worked well and agreed
with the reference solution well. For the case the \MPN model worked,
the new moment model had similar behavior. Moreover, the new moment
model was also used to simulate benchmark problems and showed good
agreement with the reference solution.

The current work aims to fix the defects of the \MPN model and propose
a novel globally hyperbolic moment system for the
frequency-independent RTE in slab geometry. Future work in this
research area certainly contained: 1) an extension to the
three-dimensional case; 2) an extension to the frequency-dependent
case; 3) the existence and uniqueness of the solution of the new
moment model.  Moreover, the regularization method in this work is
worth further investigation. The hyperbolic regularization method in
\cite{Fan, Fan_new} is the first hyperbolic regularization method.
The novel regularization method generalizes it, extends its
application range and takes more properties of the kinetic equation
into account. It is future work to extend the novel regularization
method to a general framework, which reduces the kinetic equation to a
globally hyperbolic moment system by moment model reduction with
maintaining physical properties of the kinetic equation.
}

\section*{Acknowledgements}
The authors thank Dr. Weiming Li and Dr. Julian Koellermeier for valuable discussions.
The work of Y.F. is partially supported by the U.S. Department of Energy, Office
of Science, Office of Advanced Scientific Computing Research, Scientific
Discovery through Advanced Computing (SciDAC) program and the National Science
Foundation under award DMS-1818449.
The work of R.L. and L.Z. is partially supported by Science Challenge Project,
No. TZ2016002 and the National Natural Science Foundation of China (Grant No.
91630310 and 11421110001, 11421101).


\bibliographystyle{abbrv}
\bibliography{../../article,../references}
\end{document}